\documentclass[12pt]{article}

\usepackage{epsfig,epsf}
\usepackage{amssymb,amsmath,amsthm}
\usepackage{graphicx}
\usepackage{subfigure}
\usepackage[dvipsnames]{xcolor}
\usepackage{color}
\usepackage[english]{babel}
\usepackage{epstopdf}

%
\usepackage{bbding}

 \makeatletter \@addtoreset{equation}{section}

\newtheorem{theorem}{Theorem}[section]
\newtheorem{lemma}{Lemma}[section]
\newtheorem{corollary}{Corollary}[section]
\newtheorem{prop}{Proposition}[section]



%
\textwidth = 500pt
\hoffset = -50pt

\newcommand\blfootnote[1]{%
	\begingroup
	\renewcommand\thefootnote{}\footnote{#1}%
	\addtocounter{footnote}{-1}%
	\endgroup
}

\begin{document}

\title{On branches of positive solutions for p-Laplacian problems  at the extreme value  of Nehari manifold method}

\author{Yavdat Il'yasov
	\and
	Kaye Silva
}

\newcommand{\Addresses}{{
		\bigskip
		\footnotesize
		
		Yavdat Il'yasov, \textsc{Institute of Mathematics, Ufa Scientific Senter, Russian Academy of Sciences, 112, Chernyshevsky str., Ufa, Russia}\par\nopagebreak
		\textit{E-mail address}: \texttt{ilyasov02@gmail.com}
		
		\medskip
		
		Kaye Silva, \textsc{Institute of Mathematics and Statistics, Federal University of Goias, Campus II, CEP 74690-900
			}\par\nopagebreak
		\textit{E-mail address}: \texttt{kayeoliveira@hotmail.com}
		
}}

\date{$\ $}
\maketitle

\abstract{This paper	is concerned with variational continuation of branches of solutions for nonlinear boundary value problems, which  involve the p-Laplacian, the indefinite  nonlinearity, and depend on  the real parameter $\lambda$. A special focus is made on the extreme value  of Nehari manifold  $\lambda^*$, which determines the threshold of applicability of Nehari manifold method.  In the main result  the existence of two branches of positive solutions for the cases where parameter $\lambda$ lies above the  threshold  $\lambda^*$ is obtained. }

\medskip

{\it Key words}: $p$-Laplacian,  extreme value of Nehari manifold method, indefinite nonlinearity, branches of solutions

\blfootnote{\Addresses}

\section{Introduction}
We study the following  p-Laplacian problem with indefinite  nonlinearity 

\begin{equation}\label{p}
\left\{
\begin{aligned}
-\Delta_p u&= \lambda |u|^{p-2}u+f|u|^{\gamma-2}u &&\mbox{in}\ \ \Omega \\
u&=0                                   &&\mbox{on}\ \ \partial\Omega 
\end{aligned}
\right.
\end{equation}

Here $\Omega$ denotes a bounded domain in $\mathbb{R}^N$ with $C^1$-boundary 
$\partial\Omega$,  $\lambda$ is a real parameter,  $1<p<\gamma < p^*$, where
$p^*$ is the critical Sobolev exponent, $f \in L^d(\Omega)$ where $d\geq 
p^*/(p^*-\gamma)$ if $p<N$,  and $d>1$ if $p\geq N$. We suppose that (\ref{p}) 
has an indefinite nonlinearity, i.e., $f$ change sign in $\Omega $.   
By a solution of (\ref{p}) we  mean  a critical point $u \in 
W:=W_0^{1,p}(\Omega)$ of the energy functional
\begin{equation*}
\Phi_\lambda (u) = \frac{1}{p} \int |\nabla u|^{p} dx - \frac{\lambda}{p} \int 
|u|^{p} dx -
\frac{1}{\gamma} \int f |u|^{\gamma} dx,
\end{equation*}
where $W_0^{1,p}(\Omega)$ is the standard Sobolev space.

The problems with the indefinite nonliearity of type \eqref{p}  have been intesively studied, see e.g.,  Alama \& Tarantello \cite{alam}, Berestycki, Capuzzo--Dolcetta \& Nirenberg \cite{BCDN}, 
Ouyang \cite{ou2}. One of the fruitful approaches in the study of such 
problems  is the Nehari manifold method \cite{Neh1} where solutions are 
obtained through the constrained minimization problem 
\begin{equation}\label{NMMg}
\min\{\Phi_\lambda(u): ~u\in \mathcal{N}_\lambda\}
\end{equation}
with the Nehari manifold $\mathcal{N}_\lambda:=\{u \in W\setminus 
0:~D_u\Phi_\lambda(u)(u)=0\}$   (see e.g.  Drabek \& Pohozhaev \cite{drabekp}, 
Il'yasov \cite{ilyas, ilyasENMM}, Ouyang \cite{ou2}).

The applicability of NM-method to \eqref{p} depends on the parameter $\lambda$. 
Indeed, \eqref{p} possess the so-called \textit{extreme value of the Nehari 
	manifold method}   \cite{ilyasENMM} 
\begin{equation}\label{Ouyang}
\lambda^*=\inf\left\{\frac{\int |\nabla u|^p dx }{\int
	|u|^pdx}:~~\int
f|u|^{\gamma}dx\geq  0,~u \in W\setminus 0\right\}, 		
\end{equation}
which was known to be the first found by Ouyang \cite{ou2}.  A feature of  
$\lambda^*$  is that it defines a threshold for the applicability of the Nehari 
manifold method  
so that  for any  $\lambda<\lambda^*$ the set $\mathcal{N}_\lambda$ is a 
$C^1$-manifold of codimension $1$ in $W$ wherein 
for any $\lambda\ge\lambda^*$ there is $u \in \mathcal{N}_\lambda$ such that 
$\Phi_\lambda''(u):=D^2_{uu}\Phi_\lambda(u)(u,u)=0$. Moreover,  
$\Phi_{\lambda}$ is unbounded from below over $\mathcal{N}_\lambda$ if $\lambda\ge\lambda^*$ (see e.g 
\cite{ilyasENMM}).
It is remarkable that once the extreme value \eqref{Ouyang} is detected, one is 
able to directly find solutions for (\ref{p}) as 
$\lambda<\lambda^*$, by means of the Nehari  minimization problems \eqref{NMMg} 
(see e.g.  \cite{ou2} for $p=2$   and  \cite{ilyas} for $1<p<+\infty$).

A natural question which arises  from this is whether there are any positive 
solutions of \eqref{p} for $\lambda>\lambda^*$. An answer for this question, in 
the case $p=2$, follows from the works of  Alama \& Tarantello  \cite{alam}, 
Ouyang 
\cite{ou2}, where the authors proved that \eqref{p} possess a branch of  
minimal positive solution for $\lambda$ belonging to the whole interval 
$(-\infty,\Lambda)$ and does not 
admit any positive solutions for $\lambda>\Lambda$.  However, one approach used in \cite{alam, 
	ou2} is based on the application of the local continuation  method \cite{CrRab}, 
which  essentially involves an analysis  of the corresponding linearized 
problems.

The main aim of the present paper is to give a contribution in  the 
investigation of the branches of solutions  for the problems where the 
application of local continuation methods can cause difficulty.
Our approach is based on the development of the Nehari manifold method  where 
we focus also  on obtaining a new knowledge on extreme value of the Nehari 
manifold method.

Let us state our main results. Denote
$$
\Omega^+=\{x\in\Omega:\ f^+(x)\neq 0\},~~ \Omega^-=\{x\in\Omega:\ f^-(x)\neq 0\}
$$
and $\Omega^0=\Omega\setminus(\overline{\Omega^+}\cup \overline{\Omega^-})$.
We write $U\neq \emptyset$ if the interior int($U$) of a set $U \subset 
\mathbb{R}^n$  is  non-empty. We denote 
($\lambda_1({\rm 
	int}(U))$, $\phi_1({\rm 
	int}(U)))$  the  first eigenpair of 
$-\Delta_p$ on ${\rm 
	int}(U)$ with zero Dirichlet boundary conditions. It is known 
that $\lambda_1({\rm 
	int}(U))$ is positive, simple and isolated, and $\phi_1({\rm 
	int}(U))$ is positive 
\cite{ lindqvist}.
To simplify notations, we write $\lambda_1:=\lambda_1(\Omega)$, $\phi_1:=\phi_1(\Omega)$.

Throughout the paper, we assume that $\Omega^+\neq \emptyset$. Furthermore, we 
shall need the following assumption
\begin{quote}
	{($f_1$):}\, If $\Omega^0\neq \emptyset$, then $\lambda_1({\rm 
		int}(\Omega^0\cup\Omega^+))<\lambda_1({\rm int}(\Omega^0))$.
\end{quote}
Notice that if $\Omega^0\cup\Omega^+ \neq \emptyset$,  then $\lambda^*<+\infty$ 
and there exists 
$\overline{\lambda}>\lambda_1$  such that (\ref{p}) has no positive solutions 
for any 
$\lambda>\overline{\lambda}$ (see e.g. \cite{ilyasCRAS}).

Our main result is the following

\begin{theorem}\label{thmlu}
	Let $1<p<\gamma < p^*$ and suppose that $\Omega^+\neq \emptyset$, 
	$F(\phi_1)<0$ and ($f_1$) is satisfied. Then there exists 
	$\Lambda>\lambda^*$ such that for all $\lambda\in (\lambda^*,\Lambda)$ 
	problem (\ref{p}) admits two positive weak solutions 
	$u_\lambda,\overline{u}_\lambda$. 
	Moreover,  
	
	\begin{description}
		\item[(i)] 
		$\Phi_\lambda''(u_\lambda)>0,\Phi_\lambda''(\overline{u}_\lambda)>0$ and 
		$\Phi_\lambda(u_\lambda)<\Phi_\lambda(\overline{u}_\lambda)<0$ 
		for any $\lambda\in 
		(\lambda^*,\Lambda)$;
		\item[(ii)] $\Phi_\lambda(u_\lambda)\uparrow 
		\Phi_{\lambda^*}(u_{\lambda^*})$ as $\lambda \downarrow \lambda^*$.
	\end{description}
\end{theorem}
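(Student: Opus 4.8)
The plan is to realize the two positive solutions $u_\lambda, \overline u_\lambda$ as minimizers of $\Phi_\lambda$ over the two ``halves'' of the Nehari manifold that split off once $\lambda$ crosses the threshold $\lambda^*$. For $\lambda<\lambda^*$ the manifold $\mathcal{N}_\lambda$ is a connected $C^1$-manifold and fibering analysis shows that for each $u$ with $\int f|u|^\gamma\,dx>0$ the fibering map $t\mapsto \Phi_\lambda(tu)$ has a unique critical point, a maximum. For $\lambda$ slightly above $\lambda^*$, using the hypotheses $F(\phi_1)<0$ and $(f_1)$, one shows that the set of directions $u$ with $\int f|u|^\gamma\,dx>0$ is nonempty \emph{and} that along such directions the quadratic form $\int|\nabla u|^p - \lambda\int|u|^p$ can take negative values. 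This produces a nonempty ``nodal'' set $\mathcal{N}_\lambda^{-}=\{u\in\mathcal{N}_\lambda:\Phi_\lambda''(u)<0\}$ and a nonempty part $\mathcal{N}_\lambda^{+}=\{u\in\mathcal{N}_\lambda:\Phi_\lambda''(u)>0\}$; I would build the two solutions by minimizing over $\mathcal{N}_\lambda^{+}$ and over $\mathcal{N}_\lambda^{-}$ (or, for the second one, over the boundary/mountain-pass configuration connecting them), checking that these are natural constraints, that the infima are attained, and that critical points of the restricted functional are in fact free critical points of $\Phi_\lambda$. Positivity of the minimizers follows by replacing $u$ with $|u|$ (the energy is even and the fibering structure is unchanged) and then invoking the strong maximum principle / Harnack inequality for $-\Delta_p$ (Vázquez), using the regularity afforded by $f\in L^d$ with $d\ge p^*/(p^*-\gamma)$.

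The concrete steps, in order: (1) record the variational/fibering setup: for $u\in W\setminus 0$ with $\int f|u|^\gamma\,dx\neq 0$, analyze $\psi_u(t)=\Phi_\lambda(tu)$, locate its critical points via $\psi_u'(t)=0$, and classify them by the sign of $\psi_u''(t)$; this partitions $\mathcal{N}_\lambda$ into $\mathcal{N}_\lambda^{\pm}$ and the degenerate set $\mathcal{N}_\lambda^0$. (2) Show $\mathcal{N}_\lambda^{0}=\emptyset$ for $\lambda<\Lambda$ with $\Lambda>\lambda^*$ suitably chosen — this is where $\lambda^*$ must be used carefully, because at $\lambda=\lambda^*$ degeneracy appears exactly on the minimizing direction of \eqref{Ouyang}; I expect to need a quantitative gap, obtained from $(f_1)$ and $F(\phi_1)<0$, between the two Rayleigh-type quotients so that the degeneracy does not occur on $\mathcal{N}_\lambda^{\pm}$ for $\lambda$ in a full right-neighborhood of $\lambda^*$. (3) Prove coercivity of $\Phi_\lambda$ on $\mathcal{N}_\lambda^{+}$ (it is bounded below there even though it is unbounded below on all of $\mathcal{N}_\lambda$ for $\lambda\ge\lambda^*$) and lower semicontinuity along a minimizing sequence; extract a weak limit and upgrade to strong convergence using the $(S_+)$ property of $-\Delta_p$ together with the fact that the limit stays off the boundary of $\mathcal{N}_\lambda^{+}$ — i.e. it does not slip into $\mathcal{N}_\lambda^{0}$ — again by the gap from Step 2. (4) Do the analogous (harder) argument for $\overline u_\lambda$: here I would characterize $\overline u_\lambda$ by a constrained minimization on $\mathcal{N}_\lambda^{-}$, or equivalently as $\min\{\Phi_\lambda(u): u\in\mathcal{N}_\lambda, \ \Phi_\lambda(u)<0, \ u \text{ in the component containing } \mathcal{N}_\lambda^{-}\}$, obtaining $\Phi_\lambda(u_\lambda)<\Phi_\lambda(\overline u_\lambda)<0$ from the fibering picture (the two critical points of $\psi_u$ have ordered critical values, the max being negative for these directions when $\lambda$ is close to $\lambda^*$). (5) Establish the Lagrange-multiplier step: a minimizer $w$ of $\Phi_\lambda$ on $\mathcal{N}_\lambda^{\pm}$ satisfies $\Phi_\lambda'(w)=\mu\, G'(w)$ where $G(u)=D_u\Phi_\lambda(u)(u)$; pairing with $w$ and using $G(w)=0$, $G'(w)(w)=\Phi_\lambda''(w)\neq 0$ forces $\mu=0$. (6) Positivity via $u\mapsto|u|$ and the strong maximum principle, and regularity $u\in C^{1,\alpha}_{loc}$. (7) Finally, the continuity statement (ii): show $\lambda\mapsto \Phi_\lambda(u_\lambda)$ is monotone (immediate, since $\Phi_\lambda(u)$ is pointwise decreasing in $\lambda$) and left-continuous at $\lambda^*$ by a standard $\Gamma$-convergence / diagonal argument, comparing $\Phi_\lambda(u_\lambda)$ with $\Phi_{\lambda^*}$ evaluated on a near-optimal competitor and vice versa, and using that $u_{\lambda^*}$ exists and lies on $\mathcal{N}_{\lambda^*}^{+}$ (the limiting manifold still has a nondegenerate positive part under $(f_1)$, $F(\phi_1)<0$).

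The main obstacle, I expect, is Step 2 together with the ``does not slip into $\mathcal{N}_\lambda^0$'' part of Steps 3–4: precisely because $\lambda^*$ is the threshold where $\Phi_\lambda''$ vanishes somewhere on $\mathcal{N}_\lambda$ and $\Phi_\lambda$ becomes unbounded below on the manifold, one must show that the \emph{positive} part $\mathcal{N}_\lambda^{+}$ stays uniformly nondegenerate and that its closure does not meet $\mathcal{N}_\lambda^{0}$ for $\lambda$ in a genuine interval $(\lambda^*,\Lambda)$ — otherwise minimizing sequences could escape and the infimum would not be attained, or would coincide with the infimum on all of $\mathcal{N}_\lambda$ (which is $-\infty$). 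This is exactly where the structural hypotheses $F(\phi_1)<0$ and $(f_1)$ enter: $F(\phi_1)<0$ guarantees that the first eigenfunction direction is ``good'' (puts $\phi_1$, after scaling, on $\mathcal{N}_\lambda^{+}$ with negative energy, giving the strict inequality $\Phi_\lambda(\overline u_\lambda)<0$), while $(f_1)$ prevents the degeneration of the Rayleigh quotient from being driven by mass concentrating on $\Omega^0$, thereby giving the needed spectral gap. Carrying out the uniform (in $\lambda$) version of these estimates, and combining them to pin down $\Lambda>\lambda^*$ and the strict sign and ordering claims in (i), is the technical heart of the argument; the rest is by now fairly standard Nehari-manifold / $(S_+)$-operator machinery.
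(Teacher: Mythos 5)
Your plan breaks down at exactly the point you identify as the ``technical heart'': Steps 2 and 3 assert things that are false for $\lambda>\lambda^*$. By the defining property of the extreme value $\lambda^*$ (stated in the introduction of the paper and in \cite{ilyasENMM}), for every $\lambda\ge\lambda^*$ there exists $u\in\mathcal{N}_\lambda$ with $\Phi_\lambda''(u)=0$; so the degenerate set you call $\mathcal{N}_\lambda^0$ is \emph{nonempty} for all $\lambda$ in any right-neighborhood of $\lambda^*$, and no quantitative gap coming from $F(\phi_1)<0$ and ($f_1$) can make it empty. Likewise, $\Phi_\lambda$ is \emph{not} bounded below on $\mathcal{N}_\lambda^+$ once $\lambda>\lambda^*$: since $\Phi_\lambda=c_{p,\gamma}H_\lambda$ on $\mathcal{N}_\lambda$, the unboundedness from below of $\Phi_\lambda$ over $\mathcal{N}_\lambda$ occurs precisely on the part with $H_\lambda<0$, i.e.\ on $\mathcal{N}_\lambda^+$. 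Concretely, taking the minimizer $\phi_1^*$ of \eqref{Ouyang} (which satisfies $H_{\lambda^*}(\phi_1^*)=F(\phi_1^*)=0$), one has $H_\lambda(\phi_1^*)<0$ for $\lambda>\lambda^*$, and perturbing $\phi_1^*$ by mass on $\Omega^-$ produces $v$ with $H_\lambda(v)<0$ and $F(v)<0$ arbitrarily close to $0$, so $J^+_\lambda(v)=-c_{p,\gamma}|H_\lambda(v)|^{\gamma/(\gamma-p)}/|F(v)|^{p/(\gamma-p)}\to-\infty$. Hence your minimization over $\mathcal{N}_\lambda^+$ has infimum $-\infty$ and cannot produce $u_\lambda$; ``uniform nondegeneracy of the positive part'' is exactly what is lost at $\lambda^*$, and acknowledging the obstacle is not a mechanism for overcoming it. A further mismatch: the theorem asserts $\Phi_\lambda''(\overline{u}_\lambda)>0$, so the second solution is not of $\mathcal{N}_\lambda^-$ type; minimizing over $\mathcal{N}_\lambda^-$ (where, incidentally, one needs $F>0$, not the sign configuration relevant here) would yield an object with the wrong second-variation sign.

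The paper's actual mechanism, absent from your proposal, is a truncation in the \emph{constraint} rather than in the Nehari decomposition: one minimizes $J^+_\lambda$ over $\Theta^+_{\mu}=\{H_\mu<0,\,F<0\}$ for an auxiliary parameter $\mu\in(\lambda_1,\lambda^*)$ strictly below the threshold. Corollary \ref{L01} gives $F\le c_\mu<0$ uniformly on $\overline{\Theta}^+_\mu$, which restores boundedness below and attainment (Proposition \ref{T1}) even for $\lambda\ge\lambda^*$; Corollary \ref{cormu} provides $\mu_0$ such that minimizers at $\lambda^*$ stay strictly inside $\Theta^+_{\mu_0}$, and Lemma \ref{locminEG} propagates this to $\lambda\in(\lambda^*,\Lambda)$, so the constrained minimizer is an interior point and hence a genuine critical point (Proposition \ref{Prop:weak}) — that is $u_\lambda$, and this is where $\Lambda$ comes from. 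The second solution is then a mountain pass between $u_\lambda$ and a minimizer $w_\lambda$ attained \emph{on the boundary} at the level $\mu^\lambda$ of \eqref{mus} (Proposition \ref{mppro}); the barrier on $\partial\Theta^+_{\mu_0}$ (Proposition \ref{boundlimi}), the crossing property (Proposition \ref{boundcrs}), the D\'iaz--Sa\'a path $[(1-t)u^p+tw^p]^{1/p}$ forcing $c_\lambda<0$ (Proposition \ref{ltop2}), and the Palais--Smale property at negative levels (Proposition \ref{la1c}) complete the argument. None of these ingredients — the auxiliary parameter $\mu$, the uniform bound $F\le c_\mu<0$, the boundary level $\mu^\lambda$, or the convexity path — appear in your outline, and without some substitute for them the construction of both $u_\lambda$ and $\overline{u}_\lambda$ above $\lambda^*$ does not go through.
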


This paper is organized as follows. Section 2 contains preliminaries
results. In Section 3, we show the existence of solutions $u_\lambda$. In Section 4 we show the existence of solutions $\overline{u}_\lambda$ and conclude the proof of Theorem \ref{thmlu}. In Appendix we provide some technical and auxiliary results.

\section{Preliminaries}
Denote
\begin{eqnarray*}
	H_{\lambda}(u)=
	\int|\nabla u|^p\,dx-\lambda \int|u|^p\,dx,~F(u) = \int
	f(x)|u|^{\gamma}, ~u \in W.
\end{eqnarray*}
Then 
$$
\Phi_\lambda (u) 
=\frac{1}{p}H_{\lambda}(u)-\frac{1}{\gamma}F(u),~~~\mathcal{N}_\lambda=\{u \in 
W\setminus 
0:\,H_{\lambda}(u)-F(u)=0\}.
$$
To our aims, it is sufficient to use the following Nehari submanifold 
$$
\mathcal{N}_\lambda^+:=\{u \in 
\mathcal{N}_\lambda:\,D_{uu}\Phi_\lambda(u)(u,u)> 0\}
$$
which we shall use in the fibering representation  \cite{ilyas,poh}:
\begin{eqnarray*}
	\mathcal{N}_\lambda^+=\{u=s v:  ~  s= s^+_\lambda(v),~v \in 
	\Theta^{+}_{\lambda}\},
\end{eqnarray*}
where $
\Theta^{+}_{\lambda}=\{v
\in W\setminus 0:~H_{\lambda}(v)<0,~ F(v)<0 \}
$
and
\begin{equation}
s^+_\lambda({v}) =  \left(\frac{{H}_{\lambda
	}({v})} {F(v)}\right)^{1/(\gamma
	-p)}.
\label{tt}
\end{equation}
Thus we are able to introduce
\begin{eqnarray}
J^+_{\lambda}({v})=:\Phi_\lambda (s^+_\lambda(v)
{v})=-c_{p,\gamma}\frac{|{H}_{\lambda }({v})|^{\gamma/(\gamma
		-p)}} {|{F}({v})|^{p/(\gamma
		-p)}},~ v \in \Theta^{+}_{\lambda}, \label{j2}
\end{eqnarray}
where $c_{p,\gamma}=(\gamma-p)/p\gamma$. 

Observe,  $J^+_{\lambda}$ is the 0-homogeneous functional on $\mathcal{N}_\lambda^+$, i.e.,
$J^+_{\lambda}(su)=J^+_{\lambda}(u)$ for any $s>0$, $u \in 
\mathcal{N}_\lambda^+$. 
It is worth pointing out that  \eqref{Ouyang} implies  
$$
\Theta^{+}_{\lambda}=\{v
\in  W\setminus 0:~H_{\lambda}(v)<0\}
$$ 
for any $\lambda\in (\lambda_1,\lambda^*)$. In 
what follows, we denote $\partial\Theta_\lambda^+=\{v\in W\setminus 0:\ 
H_\lambda(v)=0\}$.

It is not hard to prove  (see e.g. \cite{ilyas})
\begin{prop}\label{Prop:weak}
	If $D_v	 J^+_{\lambda}(v)(\eta)=0$ for any $\eta \in W\setminus 0$, then 
	$s^+_\lambda(v)
	v$ weakly satisfies (\ref{p}).
\end{prop}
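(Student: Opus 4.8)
The plan is to reduce the statement to a single chain-rule computation that exploits the fibering structure already encoded in $J^+_\lambda$. First I would use the $p$-homogeneity of $H_\lambda$ and the $\gamma$-homogeneity of $F$ to write the restriction of $\Phi_\lambda$ to the ray $\{sv:\ s>0\}$ as
$$\Phi_\lambda(sv)=\frac{s^p}{p}H_\lambda(v)-\frac{s^\gamma}{\gamma}F(v),\qquad s>0,$$
so that $\partial_s\Phi_\lambda(sv)=s^{p-1}H_\lambda(v)-s^{\gamma-1}F(v)$. On $\Theta^+_\lambda$ one has $H_\lambda(v)<0$ and $F(v)<0$, hence $H_\lambda(v)/F(v)>0$, and the unique positive zero of $\partial_s\Phi_\lambda(sv)$ is exactly $s=s^+_\lambda(v)$ given by (\ref{tt}). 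In other words $s^+_\lambda(v)$ is characterized intrinsically as the critical point of the fiber map $s\mapsto\Phi_\lambda(sv)$; equivalently, writing $u:=s^+_\lambda(v)v$, the radial derivative $D_u\Phi_\lambda(u)(v)=\partial_s\Phi_\lambda(sv)\big|_{s=s^+_\lambda(v)}=0$, which is just the Nehari identity $H_\lambda(u)-F(u)=0$ placing $u\in\mathcal{N}_\lambda^+$.

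The heart of the argument is differentiating $J^+_\lambda(v)=\Phi_\lambda(s^+_\lambda(v)\,v)$ in a transverse direction $\eta$. Since $\Theta^+_\lambda$ is open and $F$ does not vanish there, formula (\ref{tt}) exhibits $v\mapsto s^+_\lambda(v)$ as a composition of the $C^1$ functionals $H_\lambda,F$ with a smooth power, so it is Gâteaux differentiable on $\Theta^+_\lambda$ and the chain rule gives
$$D_vJ^+_\lambda(v)(\eta)=s^+_\lambda(v)\,D_u\Phi_\lambda(u)(\eta)+D_u\Phi_\lambda(u)(v)\,D_vs^+_\lambda(v)(\eta),\qquad\eta\in W.$$
By the previous step the coefficient $D_u\Phi_\lambda(u)(v)$ in the second summand vanishes, so that term drops out regardless of the value of $D_vs^+_\lambda(v)(\eta)$, leaving
$$D_vJ^+_\lambda(v)(\eta)=s^+_\lambda(v)\,D_u\Phi_\lambda(u)(\eta),\qquad\eta\in W.$$

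To conclude I would invoke $s^+_\lambda(v)>0$, which holds throughout $\Theta^+_\lambda$: the hypothesis $D_vJ^+_\lambda(v)(\eta)=0$ for all $\eta$ then forces $D_u\Phi_\lambda(u)(\eta)=0$ for every $\eta\in W$, i.e. $u=s^+_\lambda(v)v$ is a critical point of $\Phi_\lambda$ and hence a weak solution of (\ref{p}). The only delicate point is the justification of the chain rule, namely the differentiability of $s^+_\lambda$ together with the appearance of the radial term; but this is routine once $v$ is fixed in the open set $\Theta^+_\lambda$ where $F(v)\neq0$, and in fact the computation never needs the explicit value of $D_vs^+_\lambda(v)(\eta)$ since its coefficient is identically zero, so no quantitative control of $s^+_\lambda$ is required.
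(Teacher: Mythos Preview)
Your argument is correct and is precisely the standard fibering computation: by the chain rule $D_vJ^+_\lambda(v)(\eta)=s^+_\lambda(v)\,D_u\Phi_\lambda(u)(\eta)+D_u\Phi_\lambda(u)(v)\,D_vs^+_\lambda(v)(\eta)$, and the radial term vanishes since $u\in\mathcal{N}_\lambda$. The paper does not give its own proof of this proposition---it simply asserts that the statement is not hard to prove and refers to \cite{ilyas}---so there is nothing to compare beyond noting that your derivation is the intended one.
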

In what follows, we shall use
\begin{prop}\label{auxiliary}
	Assume $(w_n) \subset \Theta^+_\lambda$ for $\lambda>\lambda_1$ and $||w_n||=1$, $n=1,2,...$. Then there exist $w\in W\setminus 0$ and a subsequence, which we will still denote by $(w_n)$,  such that $w_n\rightharpoonup w$ weakly in $W$ and $w_n\to w$ strongly in 
	$L^q(\Omega)$ for $1< q<p^*$.
\end{prop}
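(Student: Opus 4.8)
The plan is to exploit the unit‑norm normalization together with the reflexivity of $W=W_0^{1,p}(\Omega)$ and the compactness of the Sobolev embedding. First I would note that the sequence $(w_n)$ is bounded in $W$ (it is in fact a unit sphere sequence, $\|w_n\|=1$), so since $W$ is a reflexive Banach space, the Banach–Alaoglu/Eberlein–Šmulian theorem gives a subsequence, still denoted $(w_n)$, and some $w\in W$ with $w_n\rightharpoonup w$ weakly in $W$. Next, since $1<p<\gamma<p^*$ and $\Omega$ is a bounded domain with $C^1$ boundary, the Rellich–Kondrachov theorem asserts that the embedding $W\hookrightarrow L^q(\Omega)$ is compact for every $1\le q<p^*$; applying this (and, if needed, a further diagonal extraction over a countable exhausting family of exponents $q$) yields $w_n\to w$ strongly in $L^q(\Omega)$ for all $1<q<p^*$.

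It remains to show $w\neq 0$, which is the only genuinely substantive point. Here I would use the hypothesis $\lambda>\lambda_1=\lambda_1(\Omega)$ together with $w_n\in\Theta^+_\lambda$, i.e. $H_\lambda(w_n)=\int|\nabla w_n|^p\,dx-\lambda\int|w_n|^p\,dx<0$. Since $\|w_n\|=1$ means $\int|\nabla w_n|^p\,dx=1$, this inequality forces $\int|w_n|^p\,dx>1/\lambda>0$ for all $n$. By the strong $L^p$ convergence (the case $q=p$, which is admissible as $p<p^*$), $\int|w|^p\,dx=\lim_n\int|w_n|^p\,dx\ge 1/\lambda>0$, hence $w\neq 0$. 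This is exactly the place where the restriction $\lambda>\lambda_1$ — or at least $\lambda>0$ — is essential: without a positive lower bound on $\int|w_n|^p\,dx$ the weak limit could vanish.

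The main obstacle, such as it is, is precisely pinning down the non‑triviality of the weak limit $w$; everything else is the standard reflexivity‑plus‑Rellich argument. One technical care point worth mentioning: to get strong convergence \emph{simultaneously} for all $q$ in the open interval $(1,p^*)$ from a single subsequence, I would either invoke compactness at a single exponent close to $p^*$ and interpolate with the (trivial, by boundedness of $\Omega$) convergence at $q=1$, or perform a diagonal argument over a sequence $q_k\uparrow p^*$; either route is routine. I would then conclude that the extracted subsequence has all the required properties, completing the proof of Proposition~\ref{auxiliary}.
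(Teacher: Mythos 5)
Your proposal is correct and follows essentially the same route as the paper: boundedness of the unit-norm sequence plus reflexivity and the compact Sobolev embedding give the weak limit and strong $L^q$ convergence, and the inequality $H_\lambda(w_n)<0$ (with $\lambda>\lambda_1>0$) yields a uniform positive lower bound on $\int|w_n|^p\,dx$, hence $w\neq 0$ — which is exactly the step the paper leaves implicit and you spell out.
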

\begin{proof} Once $(w_n)$ is bounded in $W$, the proof of the existence of the limit point $w \in W$ follows from the Eberlein-\v Smulian and Sobolev theorems. Since 
	$H_{\lambda}(w_n)<0$, $n=1,2,...$, it follows that $w\neq 0$.
\end{proof}

Consider the following Nehari minimization problem
\begin{eqnarray}
\hat{J}^+_{\lambda}:=  \min	\{J^+_{\lambda}(v)|~v \in
\Theta^+_{\lambda}\}. \label{N+}
\end{eqnarray}

\begin{lemma}\label{solextr} Suppose the assumptions of Theorem \ref{thmlu} are 
	satisfied. Then

	\begin{description}
		\item[(a)]  $\lambda_1<\lambda^*<\infty$;
		\item[(b)] there exists a minimizer  $ \phi_1^*$ of the problem 
		(\ref{Ouyang}) 
		such that  $\phi_1^*>0$. Moreover, any minimizer $\phi_1^*$ of 
		(\ref{Ouyang}) weakly satisfies, up to scalar multiplier,  to \eqref{p} for 
		$\lambda=\lambda^*$ and $H_{\lambda^*}(\phi^*_1)=F(\phi^*_1)=0$; 
		\item[(c)] $\hat{J}_{\lambda^*}^+>-\infty$ and there exists a minimizer 
		$v_{\lambda^*} \in 
		\Theta^+_{\lambda^*}$ of $J_{\lambda^*}^+(v_{\lambda^*})$ so that  
		$u_{\lambda^*}:=s_2(v_{\lambda^*} )v_{\lambda^*}$ satisfies
		(\ref{p}) and $u_{\lambda^*}>0$. 
	\end{description}
	
\end{lemma}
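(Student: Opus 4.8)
The plan is to establish the three assertions essentially in order, using the fibering/variational setup from Section 2. For part (a), the strict inequality $\lambda_1 < \lambda^*$ should follow from the hypothesis $F(\phi_1)<0$: since $\phi_1>0$ is the first eigenfunction, any function $u$ with $\int f|u|^\gamma \ge 0$ cannot be a scalar multiple of $\phi_1$, so the Rayleigh quotient in \eqref{Ouyang} is strictly larger than $\lambda_1$; one makes this rigorous by the usual argument that the infimum, if equal to $\lambda_1$, would be attained (by weak lower semicontinuity, cf. Proposition \ref{auxiliary}) at a minimizer which would then be a first eigenfunction, contradicting $F(\phi_1)<0$ together with the sign constraint. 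The finiteness $\lambda^*<\infty$ is the remark already recorded after $(f_1)$: since $\Omega^+\neq\emptyset$, one can build test functions supported near $\Omega^+$ with $F>0$, bounding the quotient; here assumption $(f_1)$ is what rules out the degenerate situation where $\Omega^0$ is so large that no admissible competitor of finite quotient exists.

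For part (b), I would run the direct method on \eqref{Ouyang}. Take a minimizing sequence, normalize $\|u_n\|=1$, extract a weakly convergent subsequence $u_n\rightharpoonup \phi_1^*$; the strong $L^q$ convergence for $1<q<p^*$ (Proposition \ref{auxiliary}-type compactness, using $d\ge p^*/(p^*-\gamma)$ so that $F$ is weakly continuous) passes the constraint $F(u_n)\ge 0$ to the limit, and weak lower semicontinuity of $\int|\nabla u|^p$ forces $\phi_1^*$ to be a minimizer, in particular $\phi_1^*\neq 0$ because $\lambda^*>\lambda_1>0$. Replacing $\phi_1^*$ by $|\phi_1^*|$ does not change either the numerator or the constraint, so we may take $\phi_1^*\ge 0$; a Harnack/strong maximum principle argument (as in \cite{lindqvist}) upgrades this to $\phi_1^*>0$ on any connected component meeting the support. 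The Lagrange multiplier analysis at the minimizer is the delicate point: the constraint $F(u)\ge 0$ is an inequality, so either it is inactive — then $\phi_1^*$ is an unconstrained minimizer of the Rayleigh quotient, i.e. a first eigenfunction, which is incompatible with $F(\phi_1)<0$ unless $F(\phi_1^*)=0$ — or it is active, $F(\phi_1^*)=0$, and the KKT conditions give $-\Delta_p\phi_1^* = \lambda^*|\phi_1^*|^{p-2}\phi_1^* + \mu f|\phi_1^*|^{\gamma-2}\phi_1^*$ for some multiplier $\mu\ge 0$; rescaling $\phi_1^*$ absorbs $\mu$ (when $\mu>0$) to yield a weak solution of \eqref{p} at $\lambda=\lambda^*$, and one checks $H_{\lambda^*}(\phi_1^*)=0$ directly from $\phi_1^*$ being a minimizer with $F(\phi_1^*)=0$. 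The case $\mu=0$ must be excluded or shown to force $F(\phi_1^*)=0$ anyway; this is where $F(\phi_1)<0$ is used decisively, since a nonnegative eigenfunction is a multiple of $\phi_1$.

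For part (c), boundedness below of $\hat J^+_{\lambda^*}$ and existence of a minimizer $v_{\lambda^*}\in\Theta^+_{\lambda^*}$ is the heart of the matter. Because $J^+_\lambda$ is $0$-homogeneous, I would minimize over the sphere $\|v\|=1$ inside $\Theta^+_{\lambda^*}$; take a minimizing sequence $(v_n)$, apply Proposition \ref{auxiliary} to get $v_n\rightharpoonup v_{\lambda^*}\neq 0$ and strong $L^q$ convergence. The danger is that along the sequence $H_{\lambda^*}(v_n)\to 0$ or $F(v_n)\to 0$, i.e. $v_n$ drifts to $\partial\Theta^+_{\lambda^*}$, which would make $J^+_{\lambda^*}(v_n)\to 0$ or $-\infty$ and destroy the minimizer; ruling this out — showing the infimum is attained in the open set $\Theta^+_{\lambda^*}$ with a strictly negative, finite value — is the main obstacle, and I expect it to hinge on a careful use of assumption $(f_1)$ and the eigenvalue gap it encodes, exactly as in the analysis of the extreme value in \cite{ilyasENMM}. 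Once the minimizer is secured, Proposition \ref{Prop:weak} gives that $u_{\lambda^*}=s^+_{\lambda^*}(v_{\lambda^*})v_{\lambda^*}$ is a weak solution of \eqref{p}; positivity follows by replacing $v_{\lambda^*}$ with $|v_{\lambda^*}|$ (which stays in $\Theta^+_{\lambda^*}$ and does not increase $J^+_{\lambda^*}$) and then invoking the strong maximum principle for the $p$-Laplacian. I would reconcile the notation $s_2$ in the statement with $s^+_{\lambda^*}$ from \eqref{tt} — presumably $s_2(\cdot):=s^+_{\lambda^*}(\cdot)$ — and note that $\Phi_{\lambda^*}(u_{\lambda^*})=\hat J^+_{\lambda^*}<0$ records the value used later in Theorem \ref{thmlu}(ii).
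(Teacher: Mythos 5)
There are two genuine gaps. First, in part \textbf{(b)} your Lagrange multiplier step silently assumes a constraint qualification: you write the KKT identity with the gradient of the Rayleigh quotient having coefficient $1$ and only discuss the cases ``constraint inactive'' and ``multiplier on $F$ equal to zero''. The delicate case is the degenerate (Fritz John) one in which the multiplier in front of $H_{\lambda^*}$ vanishes, i.e. $D_vF(\phi_1^*)=f|\phi_1^*|^{\gamma-2}\phi_1^*\equiv 0$, which forces $\operatorname{supp}\phi_1^*\subset\Omega^0$. This is exactly where the paper uses assumption ($f_1$): if $\Omega^0\neq\emptyset$, then $\operatorname{supp}\phi_1^*\subset\Omega^0$ together with $H_{\lambda^*}(\phi_1^*)=0$ gives $\lambda_1(\mathrm{int}(\Omega^0))\le\lambda^*$, while $\lambda^*\le\lambda_1(\mathrm{int}(\Omega^0\cup\Omega^+))<\lambda_1(\mathrm{int}(\Omega^0))$ by ($f_1$), a contradiction. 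Your plan never treats this case, and you instead misattribute the role of ($f_1$) to the finiteness of $\lambda^*$ in (a) (which needs only $\Omega^+\neq\emptyset$) and to part (c).

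Second, in part \textbf{(c)} the central claims $\hat{J}^+_{\lambda^*}>-\infty$ and attainment in the open set $\Theta^+_{\lambda^*}$ are precisely what you label ``the main obstacle'' and leave unproved; expecting it to follow from ($f_1$) and soft compactness is not enough. At $\lambda=\lambda^*$ the uniform bound of Corollary \ref{L01} is unavailable (its proof requires $\mu<\lambda^*$), and a minimizing sequence can drift toward the minimizer $\phi_1^*$ of \eqref{Ouyang}, where $H_{\lambda^*}=F=0$, so a direct method on the unit sphere cannot by itself exclude $J^+_{\lambda^*}(v_n)\to-\infty$. The paper takes a different route: by the results of \cite{ilyas} the Nehari levels $\hat{J}^+_{\lambda}$ have a finite limit $\bar{J}^+(\lambda^*)$ as $\lambda\uparrow\lambda^*$, attained by a positive weak solution $u_{\lambda^*}$ obtained as a limit of the sub-extreme solutions; it then shows $\hat{J}^+_{\lambda^*}=\bar{J}^+(\lambda^*)$ by a perturbation/contradiction argument (if some $v_K\in\Theta^+_{\lambda^*}$ had $J^+_{\lambda^*}(v_K)<\bar{J}^+(\lambda^*)-K$, then $v_K\in\Theta^+_{\lambda}$ for $\lambda$ slightly below $\lambda^*$ and $\hat{J}^+_{\lambda}\le J^+_{\lambda}(v_K)$ would violate the convergence of the levels). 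Without either this continuation-from-below argument or a substitute coercivity estimate at $\lambda^*$, your proof of (c) does not go through.
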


\begin{proof} The proof of \textbf{(a)} can be found in \cite{ilyas, 
		ilyasCRAS}. 
	Furthermore, by
	\cite{ilyas},  there exists a nonzero minimizer  $\phi_1^*$ of  (\ref{Ouyang}) 
	such that $\phi_1^*\geq 0$.  Hence by Lagrange multiplier rule there exist 
	$\mu_0, \mu_1 \geq 0$, $|\mu_0|+|\mu_1|\neq 0$ such that
	\begin{equation}
	\mu_0D_v H_{\lambda^*}(\phi^*_1)=\mu_1 D_v F(\phi^*_1). 
	\end{equation}
	Since $\phi_1^*$ is a minimizer of (\ref{Ouyang}) then 
	$H_{\lambda^*}(\phi^*_1)=0$ and threfore $\mu_1 F(\phi^*_1)=0$.
	
	Suppose $\mu_0=0$, then $f|\phi^*_1|^{\gamma-2}\phi^*_1=0$ a.e. in $\Omega$. 
	This is possible only if supp $\phi^*_1$ $ \subset \Omega^0$. Thus if 
	$\Omega^0=\emptyset$ then we get a contradiciton. Assume that $\Omega^0\neq 
	\emptyset$. Then   there exist eigenpairs $(\lambda_1({\rm 
		int}(\Omega^0)),\phi_1({\rm int}(\Omega^0)))$ and $(\lambda_1({\rm 
		int}(\Omega^0\cup\Omega^+)),\phi_1({\rm int}(\Omega^0\cup\Omega^+)))$.  Since 
	$\phi_1(\Omega^0) \in W^{1,p}_0(\Omega^0)$ and $H_{\lambda^*}(\phi^*_1)=0$,  
	$\lambda_1({\rm int}(\Omega^0))\leq \lambda^*$. On the other hand, the 
	assumption ($f_1$) entails the strong inequality $\lambda_1({\rm 
		int}(\Omega^0\cup\Omega^+))<\lambda_1({\rm int}(\Omega^0))$. Hence we get a 
	contradiction because  $\lambda^*\leq \bar{\lambda} \leq \lambda_1({\rm 
		int}(\Omega^0\cup\Omega^+))$ (see \cite{ilyasCRAS}). Thus $\mu_0\neq 0$.
	
	Suppose $\mu_1=0$, then $D_u H_{\lambda^*}(\phi^*_1)=0$. By the Harnack  
	inequality (see \cite{trud}) we have $\phi^*_1 >0$ in $\Omega$. But this is  
	possible only if $\lambda^*=\lambda_1$, $\phi^*_1=\phi_1$. However, by 
	(\ref{Ouyang}), $F(\phi^*_1)\geq 0$  which contradicts the assumption 
	$F(\phi_1)<0$.
	Hence $\mu_1>0$ and therefore $F(\phi^*_1)=0$ and there exists $t(\mu)>0$ such 
	that
	$t(\mu)\phi_1^*$ satisfies  \eqref{p}. The maximum principle and regularity of 
	solutions for the $p$-Laplacian equation yields that $\phi^*_1 >0$ and 
	$\phi^*_1 \in  C^{1,\alpha}(\overline{\Omega})$. Thus we have proved 
	\textbf{(b)}.

	Let us prove \textbf{(c)}. By \cite{ilyas} there is a finite limit
	\begin{equation}\label{limGR}
	\hat{J}_{\lambda}^+ \to 	\bar{J}^+(\lambda^*)\ge -\infty~~\mbox{as}~~ 
	\lambda 
	\uparrow \lambda^*.
	\end{equation}
	and there exists a weak positive solution  $u_{\lambda^*}$ of \eqref{p} such 
	that $\bar{J}^+(\lambda^*)=J_{\lambda^*}^+(u_{\lambda^*})$. It is clear that  
	$\hat{J}_{\lambda^*}^+\leq\bar{J}^+(\lambda^*)$. Thus, we will obtain the proof 
	if we show that    $\hat{J}_{\lambda^*}^+=\bar{J}^+(\lambda^*)$. 
	Suppose, contrary to our claim, that 
	$\hat{J}_{\lambda^*}^+<\bar{J}^+(\lambda^*)$.
	We prove that this is impossible if $\hat{J}_{\lambda^*}^+=-\infty$. The proof 
	in the other case is similar. 
	
	Since $\hat{J}_{\lambda^*}^+=-\infty$, for every $K>0$, one can find  $v_K \in 
	\Theta^+_{\lambda^*}$ such that $J_{\lambda^*}^+(v_K)<\bar{J}^+(\lambda^*)-K$. 
	Since $J_{\lambda}^+(v_K) \to J_{\lambda^*}^+(v_K)$, for every $\varepsilon>0$, 
	there exists $\delta>0$ such that $|J_{\lambda}^+(v_K) - 
	J_{\lambda^*}^+(v_K)|<\varepsilon$ as $|\lambda-\lambda^*|<\delta$. In view
	of \eqref{limGR}, we may assume that there holds also $|\hat{J}_{\lambda}^+ -	
	\bar{J}^+(\lambda^*)|<\varepsilon$ if $|\lambda-\lambda^*|<\delta$. Then
	$$
	\bar{J}^+(\lambda^*)-\varepsilon<\hat{J}_{\lambda}^+\leq 
	J_{\lambda}^+(v_K)<J_{\lambda^*}^+(v_K)+\varepsilon<\bar{J}^+(\lambda^*)-K+\varepsilon.
	$$
	Since $K>0$, $\varepsilon>0$ may be chosen arbitrarily, we get a 
	contradiction.  
	
\end{proof}	

We need also
\begin{corollary}\label{cormu}
	There exists $\mu_0 \in (\lambda_1, \lambda^*)$ such that any minimizer $w_{\lambda^*}$
	of \eqref{N+} for $\lambda=\lambda^*$	satisfies $H_{\mu_0}(w_{\lambda^*})<0$.
\end{corollary}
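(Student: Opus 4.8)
The plan is to argue by contradiction, using compactness of the minimizers of $J^+_{\lambda^*}$ (or rather of $\hat{J}^+_{\lambda^*}$) together with the strict inequality $\lambda_1 < \lambda^*$ from Lemma \ref{solextr}(a). Suppose the claim fails. Then for every $n$ there is a minimizer $w_n := w_{\lambda^*}^{(n)}$ of \eqref{N+} at $\lambda=\lambda^*$ with $H_{\mu_n}(w_n)\ge 0$, where $\mu_n := \lambda_1 + (\lambda^* - \lambda_1)/n \downarrow \lambda_1$. Since $J^+_{\lambda^*}$ is $0$-homogeneous we may normalize $\|w_n\|=1$; since each $w_n\in\Theta^+_{\lambda^*}$ we have $H_{\lambda^*}(w_n)<0$ and $F(w_n)<0$. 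By Proposition \ref{auxiliary} there is $w\in W\setminus 0$ with $w_n\rightharpoonup w$ in $W$ and $w_n\to w$ in $L^q$ for $1<q<p^*$.

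The key steps are then: (1) Pass to the limit in $H_{\mu_n}(w_n)\ge 0$. Since $\int|u|^p$ is weakly continuous (strong $L^p$ convergence) and $\int|\nabla u|^p$ is weakly lower semicontinuous, one gets $\int|\nabla w|^p \ge \liminf \int|\nabla w_n|^p \ge \limsup \mu_n \int |w_n|^p = \lambda_1 \int|w|^p$, i.e. $H_{\lambda_1}(w)\ge 0$. By the variational characterization of $\lambda_1$ this forces $\int|\nabla w|^p = \lambda_1\int|w|^p$, so $w$ is a first eigenfunction, i.e. $w = \pm c\,\phi_1$ for some $c>0$, and moreover $\liminf\int|\nabla w_n|^p = \lambda_1\int|w|^p = \int|\nabla w|^p$, which together with weak convergence upgrades $w_n\to w$ \emph{strongly} in $W$. (2) Use strong convergence to evaluate $F(w) = \lim F(w_n) < 0$ is not immediate, but at least $F(w) = \lim F(w_n) \le 0$; since $w = \pm c\phi_1$ and $F$ is even and $\gamma$-homogeneous, $F(w) = c^\gamma F(\phi_1) < 0$ by the hypothesis $F(\phi_1)<0$ of Theorem \ref{thmlu}. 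Also $H_{\lambda^*}(w) = \lim H_{\lambda^*}(w_n) \le 0$ and in fact $H_{\lambda^*}(w) = (\lambda_1 - \lambda^*)\int|w|^p < 0$ since $\lambda_1<\lambda^*$ and $w\neq 0$. Hence $w\in\Theta^+_{\lambda^*}$ (using $\Theta^+_{\lambda^*}=\{v: H_{\lambda^*}(v)<0, F(v)<0\}$). (3) Derive the contradiction: by strong convergence $J^+_{\lambda^*}(w_n)\to J^+_{\lambda^*}(w)$, so $w$ is itself a minimizer of \eqref{N+} at $\lambda^*$, hence by Proposition \ref{Prop:weak} and regularity $s^+_{\lambda^*}(w)w$ is a positive solution of \eqref{p} at $\lambda=\lambda^*$. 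But $w = \pm c\phi_1$, and plugging $\phi_1$ into \eqref{p} would force $f|\phi_1|^{\gamma-2}\phi_1 = (\lambda^*-\lambda_1)|\phi_1|^{p-2}\phi_1 \ge 0$ a.e., contradicting $F(\phi_1)<0$ (equivalently, contradicting that $f$ changes sign against the sign-definite $\phi_1$). This contradiction proves the corollary.

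The main obstacle I expect is step (1): establishing that $w_n\to w$ \emph{strongly} in $W$ rather than merely weakly, and correctly tracking the chain of (lower semi)continuity inequalities so that the eigenvalue variational principle can be invoked with equality. One must be careful that $H_{\mu_n}(w_n)\ge 0$ with $\mu_n\to\lambda_1$ gives, in the limit, $H_{\lambda_1}(w)\ge 0$ and not something weaker; the drift of the parameter is harmless because $\int|w_n|^p\to\int|w|^p$ is bounded, so $\mu_n\int|w_n|^p\to\lambda_1\int|w|^p$. A secondary subtlety is ruling out $F(w)=0$: this is where the hypotheses $F(\phi_1)<0$ and ($f_1$) enter, exactly as in the proof of Lemma \ref{solextr}(b); in fact one could alternatively short-circuit steps (2)–(3) by noting $w=\pm c\phi_1$ already violates $F(\phi_1)<0$ via $F(w_n)<0$ and strong $L^\gamma$ convergence only up to a sign — but the cleanest route is to observe that a minimizer equal to a multiple of $\phi_1$ is incompatible with $w\in\Theta^+_{\lambda^*}$ unless $F(\phi_1)<0$, and then that such a $w$ cannot solve \eqref{p}. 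Either way the corollary follows, and the value $\mu_0$ can be taken as any $\mu_n$ for which the argument's contradiction has already been reached, i.e. for all $n$ large; equivalently, $\sup\{ \mu : H_\mu(w_{\lambda^*})<0 \text{ for every minimizer } w_{\lambda^*}\} > \lambda_1$.
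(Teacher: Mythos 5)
Your proof has a genuine gap, and it is at the very first step: the choice of the contradiction sequence. Negating the corollary gives, for \emph{every} $\mu\in(\lambda_1,\lambda^*)$, a minimizer $w_\mu$ of \eqref{N+} with $H_{\mu}(w_\mu)\ge 0$, i.e.\ with Rayleigh quotient $\ge\mu$. You choose $\mu_n\downarrow\lambda_1$, but then the hypothesis $H_{\mu_n}(w_n)\ge 0$ becomes asymptotically vacuous: by the variational characterization of $\lambda_1$, \emph{every} $w\in W\setminus 0$ satisfies $\int|\nabla w|^p\ge\lambda_1\int|w|^p$, so in the limit you recover a condition that carries no information and cannot force $w$ to be a first eigenfunction. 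Your step (1) masks this by two incorrect moves: the chain $\int|\nabla w|^p\ge\liminf\int|\nabla w_n|^p$ reverses weak lower semicontinuity (the correct inequality is $\le$), and the conclusion ``$H_{\lambda_1}(w)\ge 0$ forces $\int|\nabla w|^p=\lambda_1\int|w|^p$'' is unjustified — equality would follow from $H_{\lambda_1}(w)\le 0$, not from $\ge 0$. Since the identification $w=c\,\phi_1$ fails, the strong $W$-convergence upgrade and the final contradiction with $F(\phi_1)<0$ in steps (2)--(3) are unsupported.

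The information in the negated statement lives at the \emph{other} end of the interval, and this is how the paper argues: take $\mu_n\uparrow\lambda^*$, so that the minimizers $w_n$ (normalized, $w_n\in\Theta^+_{\lambda^*}$) satisfy $0>H_{\lambda^*}(w_n)\ge-(\lambda^*-\mu_n)\int|w_n|^p\to 0$, i.e.\ $H_{\lambda^*}(w_n)\to 0$. Then, from the fibering identity $J^+_{\lambda^*}(w_n)=-c_{p,\gamma}\,s^+_{\lambda^*}(w_n)^p\,|H_{\lambda^*}(w_n)|=\hat J^+_{\lambda^*}<0$, one gets $s^+_{\lambda^*}(w_n)\to\infty$ and hence $F(w_n)\to 0$; by Proposition \ref{auxiliary} and the definition \eqref{Ouyang} the weak limit $w$ satisfies $H_{\lambda^*}(w)=F(w)=0$, so $w=\phi_1^*>0$ by Lemma \ref{solextr}(b). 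Finally, each $w_n$ solves $-\Delta_p w_n-\lambda^*|w_n|^{p-2}w_n-s^+_{\lambda^*}(w_n)^{\gamma-p}f|w_n|^{\gamma-2}w_n=0$, and since the coefficient $s^+_{\lambda^*}(w_n)^{\gamma-p}$ blows up, passing to the limit forces $f|\phi_1^*|^{\gamma-1}=0$ a.e., which is absurd. If you want to salvage your write-up, replace $\mu_n\downarrow\lambda_1$ by $\mu_n\uparrow\lambda^*$ and replace the eigenvalue argument by this blow-up argument; as it stands, the proposed proof does not establish the corollary.
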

\begin{proof}  Suppose the assertion of the corollary is false. Then there 
	exists a sequence $w_n \in 
	\Theta^+_{\lambda^*}$ such that  $\hat{J}_{\lambda^*}^+=J_{\lambda^*}^+(w_n)$ 
	and $H_{\lambda^*}(w_n) \to 0$ as $n\to \infty$. By homogeneity of $J^+_{\lambda^*}(v)$ we may assume
	$||w_n||=1$. Hence by Proposition \ref{auxiliary}, there is $w\in W\setminus 0$ such that  
	$w_n\rightharpoonup w$ in $W$ and $w_n\to w$ in 
	$L^q(\Omega)$  for $1<q<p^*$. 
	Observe
	$$
	J_{\lambda^*}^+(w_n)=-c_{p,\gamma}\frac{|H_{\lambda^*}(w_n)|^{\gamma/(\gamma-p)}}{|F(w_n)|^{p/(\gamma-p)}}=-c_{p,\gamma}s^+_{\lambda^*}(w_n)^p|H_{\lambda^*}(w_n)|.
	$$
	From this and since $J_{\lambda^*}^+(w_n)= \hat{J}_{\lambda^*}^+<0$, it follows 
	that $s^+_{\lambda^*}(w_n)\to \infty$ and  $F(w_n)\to 0$. Hence $F(w)=0$ and 
	therefore $H_{\lambda^*}(w)=0$ which implies by $(2^o)$, Lemma \ref{solextr}  
	that $w=\phi_1^*>0$.  Note that
	\begin{equation*}\label{b33}
	-\Delta_p w_n-\lambda^* 
	|w_n|^{p-2}w_n-s^+_{\lambda^*}(w_n)^{\gamma-p}f|w_n|^{\gamma-2}w_n=0,~ 
	n=1,2,\dots
	\end{equation*}
	Thus $s^+_{\lambda^*}(w_n)\to \infty$ implies $f|\phi_1^*|^{\gamma-1}=0$ a.a. 
	in $\Omega$, which is an absurd.
	
\end{proof}

\begin{corollary}\label{L01}
	For each $\mu\in (\lambda_1,\lambda^*)$, there is $c_\mu<0$ such that 
	$F(v)\le c_\mu$ $\forall v\in \overline{\Theta}_\mu^+$.
\end{corollary}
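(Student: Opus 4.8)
The plan is to argue by contradiction, exploiting the variational characterisation \eqref{Ouyang} of $\lambda^*$ together with the compactness furnished by Proposition \ref{auxiliary}. Suppose the claim fails for some $\mu\in(\lambda_1,\lambda^*)$; since $\Theta_\mu^+$ is a cone it is enough to establish the bound on the unit sphere, so I would produce a sequence $(v_n)\subset\overline{\Theta}_\mu^+$ with $\|v_n\|=1$ and $F(v_n)\to 0$. Recall that for $\mu\in(\lambda_1,\lambda^*)$ one has $\Theta_\mu^+=\{v\in W\setminus 0:\ H_\mu(v)<0\}$, and that \eqref{Ouyang} forces $F(v)<0$ whenever $v\neq 0$ and $H_\mu(v)\le 0$ (if $F(v)\ge 0$ then $\int|\nabla v|^p\ge\lambda^*\int|v|^p>\mu\int|v|^p$); hence $F<0$ on $\overline{\Theta}_\mu^+\setminus\{0\}$, $H_\mu(v_n)\le 0$ and $F(v_n)\to 0^-$. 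A first technical step is to replace each $v_n$ by a nearby element of the relatively open set $\Theta_\mu^+$ and renormalise; since $F$ is continuous on $W$ this changes $F(v_n)$ only slightly, so I may assume $H_\mu(v_n)<0$ for all $n$.

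Next I would invoke Proposition \ref{auxiliary}: along a subsequence $v_n\rightharpoonup w$ weakly in $W$ with $w\neq 0$, and $v_n\to w$ in $L^q(\Omega)$ for every $q\in(1,p^*)$. From the strong $L^q$-convergence and the integrability hypothesis on $f$ one obtains $F(v_n)\to F(w)$ — this is the same weak-continuity argument already used in the proof of Corollary \ref{cormu}: $|v_n|^\gamma$ is bounded in $L^{p^*/\gamma}$ and converges a.e., hence weakly, while $f$ lies in the dual exponent — so $F(w)=0$. At the same time, weak lower semicontinuity of $v\mapsto\int|\nabla v|^p$ together with $\int|v_n|^p\to\int|w|^p$ gives $H_\mu(w)\le\liminf_n H_\mu(v_n)\le 0$.

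Finally I would close the argument: since $w\neq 0$ and $F(w)=0\ge 0$, the characterisation \eqref{Ouyang} yields $\int|\nabla w|^p\ge\lambda^*\int|w|^p$, whereas $H_\mu(w)\le 0$ says $\int|\nabla w|^p\le\mu\int|w|^p$; as $\mu<\lambda^*$ and $\int|w|^p>0$ this is absurd, and one takes $c_\mu:=\sup\{F(v):\ v\in\overline{\Theta}_\mu^+,\ \|v\|=1\}<0$. The computations here are routine; the step I expect to demand the most care is guaranteeing that the weak limit $w$ is nonzero, which is precisely why one must first pass from the closed cone $\overline{\Theta}_\mu^+$ to $\Theta_\mu^+$ before applying Proposition \ref{auxiliary}, and the other point worth a careful check is the weak continuity of $F$ on $W$, which rests on the assumed integrability of $f$ and the compactness of the subcritical Sobolev embeddings. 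Once these are in place, the strict inequality $\mu<\lambda^*$ delivers the contradiction at once through \eqref{Ouyang}.
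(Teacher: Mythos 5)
Your proof is correct and follows essentially the same route as the paper's: argue by contradiction with a normalized sequence $F(v_n)\to 0$, use Proposition \ref{auxiliary} to get a nonzero weak limit with strong $L^q$ convergence, pass to the limit in $F$ and use weak lower semicontinuity of the gradient term, and contradict the definition \eqref{Ouyang} of $\lambda^*$. The only (harmless) deviation is your preliminary perturbation of boundary points into the open set $\Theta_\mu^+$, which is unnecessary: the nonzero-limit conclusion of Proposition \ref{auxiliary} works just as well for normalized sequences with $H_\mu(v_n)\le 0$, which is how the paper applies it directly on $\overline{\Theta}_\mu^+$.
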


\begin{proof} Let $\mu<\lambda^*$ and assume contrary to our claim that there 
	exists a sequence $v_n \in \overline{\Theta}_\mu^+$ such that 
	$F(v_n) \to 0$ as $n \to \infty$. By homogeneity of $J^+_{\lambda^*}(v)$ and Proposition \ref{auxiliary} there is $v \in W\setminus 0$ such that
	$v_n \rightharpoonup v$ weakly in $W$ and $v_n\to v$ 
	strongly in  $L^q(\Omega)$ in 
	$L^q(\Omega)$ for $1< q<p^*$.  Hence, by the weakly 
	lower-semicontinuity of $\int |\nabla v|^p dx$ we conclude that 
	$$
	H_{\lambda^*}(v)\leq \liminf_{n\to \infty}H_{\lambda^*}(v_n)=\liminf_{n\to 
		\infty}(H_{\mu}(v_n)+(\mu-\lambda^*)\int|v_n|^p)< 0.
	$$ 
	But this contradicts to the definition of $\lambda^*$ since $F(v)=0$.  
	
\end{proof}

\section{Local Minima Solutions}

In this section, we show the existence of local minima type solutions $u_\lambda$ for \eqref{p}. Let us consider the following family of constrained minimization problems
\begin{equation}\label{consmin}
\hat{J}_\lambda^+(\mu)=\inf \{J_\lambda^+(v):\ v\in \Theta_{\mu}^+\}
\end{equation}
parametrized by $\lambda\geq \lambda^*$ and $\mu\in (\lambda_1,\lambda^*)$.

\begin{prop}\label{T1}  For each 
	$\lambda\ge\lambda^*$ and $\mu\in (\lambda_1,\lambda^*)$ there holds
	
	\begin{description}
		\item[(a)] $\hat{J}_\lambda^+(\mu)>-\infty$; 
		\item[(b)] there exists a minimizer $v_\lambda(\mu)$ 
		of \eqref{consmin}.
	\end{description}
\end{prop}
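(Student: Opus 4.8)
The plan is to establish both items by working with a minimizing sequence $(v_n)\subset\Theta_\mu^+$ for $\hat J_\lambda^+(\mu)$ and using the fact that $\mu$ is strictly below $\lambda^*$ to gain compactness that is not available on $\Theta_{\lambda^*}^+$ itself. By $0$-homogeneity of $J_\lambda^+$ we may normalize $\|v_n\|=1$; by Proposition~\ref{auxiliary} (applicable since $\Theta_\mu^+\subset\Theta_{\lambda_1}^+$ for $\mu>\lambda_1$) we extract $v_n\rightharpoonup v$ weakly in $W$ with $v\neq0$ and $v_n\to v$ strongly in every $L^q$, $1<q<p^*$. In particular $F(v_n)\to F(v)$ and $\int|v_n|^p\to\int|v|^p$, while $\int|\nabla v|^p\le\liminf\int|\nabla v_n|^p$ by weak lower semicontinuity.

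For \textbf{(a)}: the key point is that Corollary~\ref{L01} gives $F(v_n)\le c_\mu<0$ uniformly, hence $F(v)\le c_\mu<0$, so $|F(v_n)|$ stays bounded away from zero. Also $H_\lambda(v_n)=H_{\lambda^*}(v_n)+(\lambda^*-\lambda)\int|v_n|^p$; since $\lambda\ge\lambda^*$ and, by the very definition \eqref{Ouyang} of $\lambda^*$ together with $F(v_n)<0$, one has $H_{\lambda^*}(v_n)\le 0$, it follows that $H_\lambda(v_n)\le 0$ and $|H_\lambda(v_n)|=-H_{\lambda^*}(v_n)+(\lambda-\lambda^*)\int|v_n|^p$ is bounded above (because $\int|\nabla v_n|^p=1$ and $\int|v_n|^p$ is bounded). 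Plugging these two bounds into the explicit formula \eqref{j2} for $J_\lambda^+$ shows $J_\lambda^+(v_n)$ is bounded below, i.e.\ $\hat J_\lambda^+(\mu)>-\infty$.

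For \textbf{(b)}: I need to show the weak limit $v$ actually lies in $\Theta_\mu^+$ and attains the infimum. Since $F(v)\le c_\mu<0$ we have $F(v)<0$. For the condition $H_\mu(v)<0$: from $H_\mu(v_n)<0$ and strong $L^p$-convergence plus weak lower semicontinuity of the gradient term, $H_\mu(v)\le\liminf H_\mu(v_n)\le 0$; I must rule out equality. If $H_\mu(v)=0$ then, since $F(v)<0$, $v$ would be a competitor showing $\lambda_1(\mathrm{int}(\ldots))$-type bound forcing $\mu\ge\lambda^*$ — more directly, $H_\mu(v)=0$ with $F(v)<0$ contradicts $\mu<\lambda^*$ by the definition \eqref{Ouyang} exactly as in the proof of Corollary~\ref{L01}. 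Hence $H_\mu(v)<0$ and $v\in\Theta_\mu^+$. Finally, $J_\lambda^+$ evaluated via \eqref{j2} is of the form $-c_{p,\gamma}|H_\lambda(\cdot)|^{\gamma/(\gamma-p)}/|F(\cdot)|^{p/(\gamma-p)}$; using $|F(v_n)|\to|F(v)|$ and $|H_\lambda(v_n)|\ge|H_\lambda(v)|$ in the limit (here the minus sign makes the weak lower semicontinuity of $|\nabla\cdot|^p$ work in our favor, as $H_\lambda(v_n)\le0$ so $|H_\lambda(v_n)|=\int|\nabla v_n|^p-(\lambda)\int|v_n|^p$ and $\liminf|H_\lambda(v_n)|\ge|H_\lambda(v)|$), we get $\liminf J_\lambda^+(v_n)\ge J_\lambda^+(v)$, while $\liminf J_\lambda^+(v_n)=\hat J_\lambda^+(\mu)\le J_\lambda^+(v)$. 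Therefore $J_\lambda^+(v)=\hat J_\lambda^+(\mu)$ and $v_\lambda(\mu):=v$ is the desired minimizer.

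The main obstacle I anticipate is the argument that the weak limit $v$ does not slip to the boundary $\partial\Theta_\mu^+$, i.e.\ that $H_\mu(v)<0$ strictly rather than $=0$; this is exactly where the strict gap $\mu<\lambda^*$ must be used, and it is the analogue of the non-degeneracy arguments appearing in the proofs of Corollary~\ref{cormu} and Corollary~\ref{L01}. A secondary, more routine, point is verifying that the bound $\int|v_n|^p\le C$ combined with $\int|\nabla v_n|^p=1$ really does control $|H_\lambda(v_n)|$ from above uniformly in $n$ — this follows from the Poincaré/Sobolev embedding and causes no trouble.
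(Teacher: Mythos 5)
Your compactness skeleton is the paper's own (normalize by $0$-homogeneity, apply Proposition \ref{auxiliary}, use Corollary \ref{L01} to keep $|F|$ away from zero, pass to the weak limit and use weak lower semicontinuity of $J_\lambda^+$ via \eqref{j2}), and part \textbf{(a)} goes through. Two slips there are worth flagging even though they are not fatal: the fact $H_{\lambda^*}(v_n)\le 0$ does \emph{not} follow ``from the definition \eqref{Ouyang} together with $F(v_n)<0$'' --- \eqref{Ouyang} gives $F(u)\ge 0\Rightarrow H_{\lambda^*}(u)\ge 0$, not the converse; what you need is immediate from $v_n\in\Theta_\mu^+$ and $\mu<\lambda^*\le\lambda$, since $H_\lambda(v_n)\le H_{\lambda^*}(v_n)\le H_\mu(v_n)<0$. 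Likewise your intermediate claim ``$|H_\lambda(v_n)|\ge|H_\lambda(v)|$ in the limit'' is reversed (for $H_\lambda\le 0$ one has $|H_\lambda(v_n)|=\lambda\int|v_n|^p-\int|\nabla v_n|^p$, and weak lower semicontinuity gives $\limsup_n|H_\lambda(v_n)|\le|H_\lambda(v)|$), although the conclusion you actually use, $J_\lambda^+(v)\le\liminf_n J_\lambda^+(v_n)$, is the correct one.

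The genuine gap is in \textbf{(b)}, at precisely the point you identified as the main obstacle. Your argument that $H_\mu(v)=0$ together with $F(v)<0$ ``contradicts $\mu<\lambda^*$ by the definition \eqref{Ouyang} exactly as in Corollary \ref{L01}'' is invalid: the infimum in \eqref{Ouyang} is taken only over functions with $F(u)\ge 0$, so a function with Rayleigh quotient $\mu<\lambda^*$ and $F<0$ is fully compatible with the definition of $\lambda^*$. In Corollary \ref{L01} the contradiction is available only because there $F(v)=0$, making $v$ admissible in \eqref{Ouyang} while $H_{\lambda^*}(v)<0$. Worse, the interiority you are trying to establish (every minimizer, for every $\mu\in(\lambda_1,\lambda^*)$ and $\lambda\ge\lambda^*$, satisfies $H_\mu(v)<0$) cannot be true in general: Proposition \ref{mppro}\textbf{(b)} produces $\mu^\lambda\in(\lambda_1,\lambda^*)$ with $\mathcal{S}_\lambda^\partial(\mu^\lambda)\neq\emptyset$, and the whole machinery of $\mathcal{S}_\lambda^\partial(\mu)$, Corollary \ref{cormu} and Lemma \ref{locminEG} exists precisely because minimizers may sit on $\partial\Theta_\mu^+$; interiority is arranged later only for the special $\mu_0$ and $\lambda$ near $\lambda^*$. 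The paper's Proposition \ref{T1}\textbf{(b)} claims only a minimizer in $\overline{\Theta}_\mu^+$ (this is how $\mathcal{S}_\lambda(\mu)$ is defined immediately afterwards), and its proof stops at $v\in\overline{\Theta_\mu^+}$ with $J_\lambda^+(v)\le\hat{J}_\lambda^+(\mu)$, hence equality. Since your closing inequality $\hat{J}_\lambda^+(\mu)\le J_\lambda^+(v)$ uses $v\in\Theta_\mu^+$, it rests on the invalid step; drop the interiority claim and conclude as the paper does, allowing the minimizer to lie in $\overline{\Theta}_\mu^+$.
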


\begin{proof} \textbf{(a)} follows immediately from Lemma
	\ref{L01}. Let us prove \textbf{(b)}. Take  a minimizing sequence $v_n\in 
	\Theta_\mu^+$ of 
	\eqref{consmin}, that is  $J^+_{\lambda}(v_n)\to 
	\hat{J}_\lambda^+(\mu)>-\infty$ as $n\to \infty$. By homogeneity of $J^+_{\lambda^*}(v)$ and Proposition \ref{auxiliary} there is $v \in W\setminus 0$ such that
	$v_n \rightharpoonup v$  in 
	$W$ and $v_n\to v$  in 
	$L^q(\Omega)$ for $1< q<p^*$.   Hence, by the 
	weak lower-semicontinuity we infer that
	$H_\mu(v)\le \liminf_{n\to \infty} H_\mu(v_n)\leq 0,\ F(v)=\lim_{n\to \infty}  
	F(v_n)<0$ and therefore $v \in  \overline{\Theta_{\mu}^+}$. By the 
	weak lower-semicontinuity of $J_\lambda^+(v)$, 
	$$
	J_\lambda^+(v)\leq \liminf_{n\to 
		\infty}J_\lambda^+(v_n)=\hat{J}_\lambda^+(\mu).
	$$
	In view of \eqref{consmin}, this is possible only if 
	$J_\lambda^+(v)=\hat{J}_\lambda^+(\mu)$, that is $v$ is a minimizer 
	of  
	\eqref{consmin}. 
	
\end{proof}

We denote the set of minimizers for \eqref{consmin}  by
$
\mathcal{S}_{\lambda}(\mu)=\{v\in \overline{\Theta}_{\mu}^+:\ J_\lambda^+(v)=	
\hat{J}_\lambda^+(\mu)\}
$
and let
$
\mathcal{S}_{\lambda}^\partial(\mu)=\{v\in 
\mathcal{S}_{\lambda}(\mu):~H_\mu(v)=0\}.
$

%

\begin{lemma}\label{locminEG} 
	Let $\lambda_0 \ge \lambda^*$ and $\mu\in (\lambda_1,\lambda^*)$ such
	that 
	$\mathcal{S}^\partial_{\lambda_0}(\mu) = \emptyset$. Then there 
	exists $\varepsilon>0$ such that 
	$\mathcal{S}^\partial_{\lambda}(\mu) = \emptyset$ for 
	each $ \lambda \in 
	[\lambda_0,\lambda_0+\varepsilon)$.
\end{lemma}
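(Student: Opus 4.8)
The plan is to argue by contradiction: if $\mathcal{S}^\partial_\lambda(\mu)\neq\emptyset$ for a sequence $\lambda_k\downarrow\lambda_0$, extract a limiting minimizer that lies on the boundary $\partial\Theta_\mu^+$ at level $\lambda_0$, contradicting $\mathcal{S}^\partial_{\lambda_0}(\mu)=\emptyset$. Concretely, suppose the conclusion fails; then there is a sequence $\lambda_k\to\lambda_0^+$ and $v_k\in\mathcal{S}^\partial_{\lambda_k}(\mu)$, so $H_\mu(v_k)=0$ and $J^+_{\lambda_k}(v_k)=\hat{J}^+_{\lambda_k}(\mu)$. By $0$-homogeneity of $J^+_{\lambda_k}$ we may normalize $\|v_k\|=1$, and since $H_\mu(v_k)=0$ with $\mu>\lambda_1$ the sequence stays away from $0$ in the relevant sense; by Proposition~\ref{auxiliary} (applied with the fixed parameter $\mu$, after noting $v_k\in\overline{\Theta}_\mu^+$) we pass to a subsequence with $v_k\rightharpoonup v$ weakly in $W$ and $v_k\to v$ strongly in every $L^q(\Omega)$, $1<q<p^*$, with $v\neq 0$.

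The next step is to identify the limit. Strong $L^q$-convergence gives $F(v_k)\to F(v)$ and $\int|v_k|^p\to\int|v|^p$, hence $H_{\lambda_k}(v_k)=H_\mu(v_k)+(\mu-\lambda_k)\int|v_k|^p\to(\mu-\lambda_0)\int|v|^p$. Weak lower semicontinuity of $\int|\nabla\cdot|^p$ yields $H_\mu(v)\le\liminf H_\mu(v_k)=0$ and $H_{\lambda_0}(v)\le\liminf H_{\lambda_0}(v_k)$. By Corollary~\ref{L01}, $F(v_k)\le c_\mu<0$, so $F(v)\le c_\mu<0$; thus $v\in\overline{\Theta}_\mu^+$. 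Using the explicit formula \eqref{j2} together with continuity of $F$ and of $H_{\lambda_k}(v_k)$ in both arguments, one checks $J^+_{\lambda_0}(v)\le\liminf_k J^+_{\lambda_k}(v_k)$ (the map $(\lambda,v)\mapsto J^+_\lambda(v)$ is jointly continuous on its domain and lower semicontinuous under weak convergence when the limit still lies in $\overline{\Theta}_\mu^+$, since $-|H_{\lambda_0}(v)|^{\gamma/(\gamma-p)}$ is weakly upper semicontinuous and $|F(v)|^{p/(\gamma-p)}$ converges). Combining with the continuity of the minimal value, $\hat{J}^+_{\lambda_k}(\mu)\to\hat{J}^+_{\lambda_0}(\mu)$ — which follows from Proposition~\ref{T1}(a) and a standard upper/lower bound argument using that any fixed competitor $w\in\Theta_\mu^+$ has $J^+_{\lambda_k}(w)\to J^+_{\lambda_0}(w)$ — we obtain $J^+_{\lambda_0}(v)\le\hat{J}^+_{\lambda_0}(\mu)$, so $v\in\mathcal{S}_{\lambda_0}(\mu)$.

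Finally I must show $v\in\mathcal{S}^\partial_{\lambda_0}(\mu)$, i.e. $H_\mu(v)=0$, to reach the contradiction. This is the delicate point, since a priori the weak limit could satisfy $H_\mu(v)<0$ (strict), placing $v$ in the interior $\Theta_\mu^+$ rather than on the boundary. To rule this out I would use the minimality more carefully: because each $v_k$ is a minimizer constrained to $\overline{\Theta}_\mu^+$ that actually sits on $\partial\Theta_\mu^+$ where $H_\mu=0$, the formula $J^+_{\lambda_k}(v_k)=-c_{p,\gamma}|H_{\lambda_k}(v_k)|^{\gamma/(\gamma-p)}/|F(v_k)|^{p/(\gamma-p)}$ with $H_{\lambda_k}(v_k)=(\mu-\lambda_k)\int|v_k|^p$ forces a precise scaling; if instead $H_\mu(v)<0$, then $v$ is an interior point of $\Theta_\mu^+$ and one can strictly decrease $J^+_{\lambda_0}$ by moving slightly — but more to the point, I would argue that if $H_\mu(v)<0$ then for $k$ large $v$ itself lies in $\Theta_{\mu}^+$ and by continuity in $\lambda$ one gets $\hat J^+_{\lambda_0}(\mu)\le J^+_{\lambda_0}(v)<\hat J^+_{\lambda_0}(\mu)$ unless equality holds throughout, which then shows that the minimum over the open set $\Theta_\mu^+$ is attained at an interior point $v$; a first-order/monotonicity analysis of $\mu'\mapsto\hat J^+_{\lambda_0}(\mu')$ (strict monotonicity near $\mu$, as in Corollary~\ref{cormu}) then contradicts the hypothesis $\mathcal{S}^\partial_{\lambda_0}(\mu)=\emptyset$ by comparing minimizers at $\mu$ and at a slightly larger $\mu'<\lambda^*$. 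The main obstacle is precisely this last dichotomy — excluding that the boundary minimizers at $\lambda_k$ converge to an interior minimizer at $\lambda_0$ — and I expect it to rely on strict monotonicity of the value function $\mu\mapsto\hat J^+_{\lambda_0}(\mu)$ together with the characterization of limit points on $\partial\Theta_\mu^+$ from Lemma~\ref{solextr} and Corollary~\ref{cormu}.
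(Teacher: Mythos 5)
Your setup — contradiction via a sequence $\lambda_k\downarrow\lambda_0$ with minimizers $v_k\in\mathcal{S}^\partial_{\lambda_k}(\mu)$, normalization, extraction of a weak limit $v\neq 0$ via Proposition~\ref{auxiliary}, the bound $F(v)\le c_\mu<0$ from Corollary~\ref{L01}, lower semicontinuity of $J^+_{\lambda_0}$, and the convergence $\hat J^+_{\lambda_k}(\mu)\to\hat J^+_{\lambda_0}(\mu)$ — is exactly the paper's route. But the proof is not complete: the step you yourself flag as ``the main obstacle'' (showing that the limit $v$ lies on $\partial\Theta^+_\mu$, i.e.\ $H_\mu(v)=0$, rather than being an interior minimizer) is left open, and the escape route you sketch does not work. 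First, producing an interior minimizer of \eqref{consmin} at $\lambda_0$ is not in conflict with the hypothesis $\mathcal{S}^\partial_{\lambda_0}(\mu)=\emptyset$ — that hypothesis says precisely that all minimizers are interior, so this branch of your dichotomy yields no contradiction by itself; also, one cannot ``strictly decrease $J^+_{\lambda_0}$ by moving slightly'' away from a minimizer. Second, the strict monotonicity of $\mu'\mapsto\hat J^+_{\lambda_0}(\mu')$ near $\mu$ that you want to invoke is not available and is in fact the opposite of what holds in this regime: Corollary~\ref{qqlpart} shows that when $\mathcal{S}^\partial_{\lambda}(\mu)=\emptyset$ the value function is locally \emph{constant} in $\mu$, and Corollary~\ref{cormu} contains no monotonicity statement at all.

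The missing step closes much more simply, with no monotonicity input, by exploiting the constraint $H_\mu(v_k)=0$ quantitatively. Since $H_\mu(v_k)=0$ and $\int|v_k|^p\to\int|v|^p$, one has $H_{\lambda_0}(v_k)=(\mu-\lambda_0)\int|v_k|^p\to(\mu-\lambda_0)\int|v|^p<0$, while $F(v_k)\to F(v)$; thus the only possible loss in passing to the limit in \eqref{j2} is the weak lower semicontinuity of $\int|\nabla\cdot|^p$. Hence $J^+_{\lambda_0}(v)\le -c_{p,\gamma}\bigl((\lambda_0-\mu)\int|v|^p\bigr)^{\gamma/(\gamma-p)}/|F(v)|^{p/(\gamma-p)}=\lim_k J^+_{\lambda_k}(v_k)=\hat J^+_{\lambda_0}(\mu)$, and equality $J^+_{\lambda_0}(v)=\hat J^+_{\lambda_0}(\mu)$ forces $|H_{\lambda_0}(v)|=(\lambda_0-\mu)\int|v|^p$, i.e.\ $H_\mu(v)=0$ (equivalently, equality forces $\int|\nabla v_k|^p\to\int|\nabla v|^p$, hence $H_\mu(v)=\lim_k H_\mu(v_k)=0$). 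So in the equality case $v\in\mathcal{S}^\partial_{\lambda_0}(\mu)$, contradicting the hypothesis, while strict inequality contradicts the definition of $\hat J^+_{\lambda_0}(\mu)$ since $v\in\overline{\Theta^+_\mu}$ and the infimum over $\Theta^+_\mu$ coincides with that over its closure by continuity of $J^+_{\lambda_0}$ on $\{F<0\}$. This is exactly how the paper's dichotomy is resolved; your write-up stops short of it.
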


\begin{proof}  Suppose the lemma were false. Then we could find  sequences  
	$\lambda_n\to \lambda_0$ such  that  $v_n:=v_{\lambda_n}^+(\mu)\in \partial\Theta_{\mu}^+$, 
	$n=1,2,...$. By By homogeneity of $J^+_{\lambda^*}(v)$ we may assume that 
	$||v_n||=1$, $n=1,2,...$, and therefore by Proposition \ref{auxiliary}, $v_n \rightharpoonup v$ 
	weakly in $W$ and $v_n\to v$ strongly  in 
	$L^q(\Omega)$ for $1< q<p^*$ and some $v\in W\setminus 0$.  Hence 
	$H_{\mu}(v)\le \liminf_{n\to \infty} H_{\mu}(v_n)= 0,\ F(v)=\lim_{n\to \infty} 
	F(v_n)<0$ and therefore $v \in  \overline{\Theta_{\mu}^+}$. Furthermore, 
	by 
	the weak lower semi-continuity 
	\begin{equation}\label{jc}
	J_{\lambda_0}^+(v)\leq \liminf_{n\to 
		\infty}J_{\lambda_n}^+(v_n)=:\tilde{J}<+\infty.
	\end{equation}
	Observe, from the Poincare's inequality and Corollary \ref{L01}, we have that 
	for all $w\in \overline{\Theta_\mu^2}$ and $\lambda \geq \lambda_1$
	$$
	|(-J^+_{\lambda}(w))^\frac{\gamma
		-p}{\gamma}-(-J^+_{\lambda_0}(w))^\frac{\gamma
		-p}{\gamma}|=\frac{|\lambda-\lambda_0|G(w)}{|F(w)|^{p/\gamma
	}}\le \frac{|\lambda-\lambda_0|}{\lambda_1}\frac{1}{|c_\mu|^{p/\gamma}}.
	$$
	Thus, $J_{\lambda_n}^+(w) \to J_{\lambda_0}^+(w)$ uniformly on 
	$w\in \overline{\Theta_{\mu}^+}$ as $n \to \infty$ and therefore 
	$\tilde{J}=\hat{J}_{\lambda_0}^+(\mu)$. Hence if 
	$J_{\lambda_0}^+(v)<\hat{J}_{\lambda_0}^+(\mu)$, we obtain a 
	contradiction 
	since $v \in  \overline{\Theta_{\mu}^+}$.  The case 
	$J_{\lambda_0}^+(v)=\hat{J}_{\lambda_0}^+(\mu)$ entails that 
	$H_{\mu_0}(v)= 0$ 
	and $v=v_{\lambda_0}(\mu)$. Consequently $v_{\lambda_0}(\mu) \in 
	\mathcal{S}_{\lambda_0}^\partial(\mu)$ which contradicts to the assumption  
	$\mathcal{S}_{\lambda_0}^\partial(\mu)= \emptyset$.

\end{proof}

Let us prove the existence of the first solution $u_\lambda$ in Theorem \ref{thmlu}.
\begin{lemma}\label{lemlu}
	Let $1<p<\gamma < p^*$ and suppose that $\Omega^+\neq \emptyset$, 
	$F(\phi_1)<0$ and ($f_1$) is satisfied. Then there exists 
	$\Lambda>\lambda^*$ such that for all $\lambda\in (\lambda^*,\Lambda)$ 
	problem (\ref{p}) admits  positive weak solution 
	$u_\lambda$ such that 
	\begin{description}
		\item[(li)] 
		$\Phi_\lambda''(u_\lambda)>0$ and 
		$\Phi_{\lambda}(u_{\lambda})<0$ 
		for any $\lambda\in 
		(\lambda^*,\Lambda)$;
		\item[(lii)] $\Phi_\lambda(u_\lambda)\uparrow 
		\Phi_{\lambda^*}(u_{\lambda^*})$ as $\lambda \downarrow \lambda^*$;
		
	\end{description}
\end{lemma}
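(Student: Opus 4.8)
The plan is to obtain $u_\lambda$ as $u_\lambda=s^+_\lambda(v_\lambda)v_\lambda$, where $v_\lambda$ minimizes the constrained problem \eqref{consmin} for one fixed value $\mu_0\in(\lambda_1,\lambda^*)$, chosen so that these minimizers never touch the boundary $\partial\Theta^+_{\mu_0}$ for $\lambda$ in a right neighbourhood of $\lambda^*$; then $v_\lambda$ is a \emph{free} critical point of $J^+_\lambda$ and Proposition~\ref{Prop:weak} produces the solution. The workhorse is the inclusion $\Theta^+_{\mu_0}\subset\Theta^+_\lambda$, valid for every $\lambda>\mu_0$: if $H_{\mu_0}(v)<0$ then $H_\lambda(v)=H_{\mu_0}(v)-(\lambda-\mu_0)\int|v|^p<0$, and $F(v)<0$ automatically since $\mu_0<\lambda^*$; moreover $\Theta^+_{\mu_0}$ is an open subset of $W$.

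\emph{Base case $\lambda=\lambda^*$.} Fix $\mu_0\in(\lambda_1,\lambda^*)$ as in Corollary~\ref{cormu}. First I would record that $\hat J^+_{\lambda^*}(\mu_0)=\hat J^+_{\lambda^*}$: ``$\ge$'' is the inclusion above, and ``$\le$'' holds because $\hat J^+_{\lambda^*}$ is attained (Lemma~\ref{solextr}(c)) at some $w$ with $H_{\mu_0}(w)<0$ (Corollary~\ref{cormu}), hence $w\in\Theta^+_{\mu_0}$. The same bookkeeping — using $F(v)\le c_{\mu_0}<0$ from Corollary~\ref{L01} together with the displayed identity for $H_{\lambda^*}$ — shows that every $v\in\mathcal S_{\lambda^*}(\mu_0)$ already lies in $\Theta^+_{\lambda^*}$ and therefore minimizes $\hat J^+_{\lambda^*}$; Corollary~\ref{cormu} then forces $H_{\mu_0}(v)<0$, so $\mathcal S^\partial_{\lambda^*}(\mu_0)=\emptyset$.

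\emph{Solution for $\lambda$ near $\lambda^*$, and (li).} Apply Lemma~\ref{locminEG} with $\lambda_0=\lambda^*$, $\mu=\mu_0$ to get $\varepsilon>0$ with $\mathcal S^\partial_\lambda(\mu_0)=\emptyset$ for all $\lambda\in[\lambda^*,\lambda^*+\varepsilon)$, and set $\Lambda:=\lambda^*+\varepsilon$. For $\lambda\in(\lambda^*,\Lambda)$ take a minimizer $v_\lambda$ of \eqref{consmin} (Proposition~\ref{T1}(b)); since $J^+_\lambda(|v|)=J^+_\lambda(v)$ and $|v_\lambda|$ is again a minimizer, we may assume $v_\lambda\ge0$, and since $\mathcal S^\partial_\lambda(\mu_0)=\emptyset$ we have $H_{\mu_0}(v_\lambda)<0$, i.e. $v_\lambda\in\Theta^+_{\mu_0}\subset\Theta^+_\lambda$, which is open. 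Hence $v_\lambda$ is a free local minimizer of the $C^1$ functional $J^+_\lambda$, so $D_vJ^+_\lambda(v_\lambda)(\eta)=0$ for all $\eta\in W$, and by Proposition~\ref{Prop:weak} the function $u_\lambda:=s^+_\lambda(v_\lambda)v_\lambda\ge0$ is a nontrivial weak solution of \eqref{p}; the strong maximum principle and elliptic regularity (as in the proof of Lemma~\ref{solextr}(b)) give $u_\lambda>0$ in $\Omega$ and $u_\lambda\in C^{1,\alpha}(\overline\Omega)$. Since $v_\lambda\in\Theta^+_\lambda$, the fibering representation of $\mathcal N_\lambda^+$ gives $u_\lambda\in\mathcal N_\lambda^+$, i.e. $\Phi_\lambda''(u_\lambda)>0$; and $\Phi_\lambda(u_\lambda)=J^+_\lambda(v_\lambda)=\hat J^+_\lambda(\mu_0)<0$ by \eqref{j2} (because $H_\lambda(v_\lambda)<0$, $F(v_\lambda)<0$). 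This is (li).

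\emph{Convergence (lii), and the main obstacle.} For fixed $v\in\Theta^+_{\mu_0}$ the set is $\lambda$-independent and, by \eqref{j2}, $\lambda\mapsto J^+_\lambda(v)$ is non-increasing (as $|H_\lambda(v)|$ grows with $\lambda$); hence $\lambda\mapsto\hat J^+_\lambda(\mu_0)=\Phi_\lambda(u_\lambda)$ is non-increasing on $[\lambda^*,\Lambda)$, so it increases as $\lambda\downarrow\lambda^*$ and it only remains to identify the limit. Arguing as in the proof of Lemma~\ref{locminEG} (Poincaré's inequality together with $|F|\ge|c_{\mu_0}|>0$ from Corollary~\ref{L01}) one gets $J^+_\lambda\to J^+_{\lambda^*}$ uniformly on the normalized elements of $\overline{\Theta^+_{\mu_0}}$, whence $|\hat J^+_\lambda(\mu_0)-\hat J^+_{\lambda^*}(\mu_0)|\le C|\lambda-\lambda^*|\to0$; combined with $\hat J^+_{\lambda^*}(\mu_0)=\hat J^+_{\lambda^*}=\Phi_{\lambda^*}(u_{\lambda^*})$ from the base case and Lemma~\ref{solextr}(c), this gives (lii). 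I expect the real difficulty to be concentrated in the base case: one must exhibit \emph{some} $\mu_0<\lambda^*$ keeping all extremal minimizers strictly inside $\Theta^+_{\mu_0}$ (this is the content of Corollary~\ref{cormu}) and then propagate $\mathcal S^\partial_\lambda(\mu_0)=\emptyset$ past $\lambda^*$ (Lemma~\ref{locminEG}); this is exactly what converts a constrained minimizer into a free critical point of $J^+_\lambda$. Once that is secured, existence, positivity, (li), and the monotone limit (lii) follow by routine arguments.
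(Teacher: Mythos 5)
Your proposal is correct and follows essentially the same route as the paper: choose $\mu_0$ via Corollary~\ref{cormu} so that $\mathcal{S}^\partial_{\lambda^*}(\mu_0)=\emptyset$ (you spell out the identification $\hat J^+_{\lambda^*}(\mu_0)=\hat J^+_{\lambda^*}$ that the paper leaves implicit), propagate this by Lemma~\ref{locminEG} to $\lambda\in(\lambda^*,\Lambda)$, and turn the interior minimizers from Proposition~\ref{T1} into positive solutions via Proposition~\ref{Prop:weak}, with (li) coming from the fibering representation. The only deviation is in (lii), where you use monotonicity of $\lambda\mapsto\hat J^+_\lambda(\mu_0)$ together with the uniform-in-$w$ estimate from the proof of Lemma~\ref{locminEG} instead of the paper's contradiction argument with a weakly convergent minimizing sequence; both arguments are valid and rest on the same ingredients.
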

\begin{proof}
	From Lemma \ref{cormu} it follows that
	there exists $\mu_0 \in (\lambda_1, \lambda^*)$ such that $\mathcal{S}_{\lambda^*}^\partial(\mu_0)= \emptyset$. Thus Lemma \ref{locminEG} implies that there exist $\Lambda>\lambda^*$  such that $\mathcal{S}_{\lambda}^\partial(\mu_0)= \emptyset$
	for all $\lambda\in (\lambda^*,\Lambda)$. Since by Proposition \ref{T1}, $\mathcal{S}_{\lambda}(\mu)\neq \emptyset$ for $\lambda\geq \lambda^*$, we conclude that for every $\lambda\in (\lambda^*,\Lambda)$
	there exists a minimizer $v_\lambda(\mu_0)$ of \eqref{consmin} such that $v_\lambda(\mu_0) \in \Theta^+_{\mu_0}$. This and Proposition \ref{Prop:weak} yield that $u_\lambda=s^+_\lambda(v_\lambda(\mu_0))v_\lambda(\mu_0)$ is a weak solution of \eqref{p} for $\lambda\in (\lambda^*,\Lambda)$.

	In virtue that $J_\lambda^+(v)=J_\lambda^+(|v|)$ and $|v| \in  \Theta_{\mu_0}^+$ for any $v \in  \Theta_{\mu_0}^+$, we may assume that $u_\lambda\geq 0$ in $\Omega$. Now by  the Harnack  inequality (see \cite{trud}) we conclude that $u_\lambda^+>0$ in $\Omega$. 
	
	Since $v_\lambda^+\in \Theta_\lambda^+$, we get \textbf{(li)}. Let us prove assertion \textbf{(lii)}. Notice that from \eqref{consmin} it follows that $\Phi_{\lambda^*}(u_{\lambda^*})=\hat{J}_{\lambda^*}^+(\mu_0) \geq \hat{J}_\lambda^+(\mu_0)$ for any $\lambda>\lambda^*$. Thus if we suppose that  assertion \textbf{(lii)} were false then we could find a sequence $\lambda_n \downarrow \lambda^*$ such that $J_{\lambda_n}^+(v_{\lambda_n}(\mu_0)) \to J_{\lambda^*}^+<\hat{J}_{\lambda^*}^+(\mu_0)$. Arguing as above, we may assume that $v_{\lambda_n}(\mu_0) \rightharpoonup v$ weakly in $W$ and $v_n\to v$ strongly in $L^p(\Omega)$, $L^\gamma(\Omega)$ as $n\to \infty$ with $v \neq 0$. This implies that $v \in  \overline{\Theta_{\mu_0}^+}$ and $J_{\lambda^*}^+(v)<\hat{J}_{\lambda^*}^+(\mu_0)$. Thus we get a contradiction.
\end{proof}

From the proof of Lemma \ref{lemlu} we see that the solution $u_\lambda$  may 
depend on the parameter $\mu \in (\lambda_1, \lambda^*)$. However, one can 
prove that, at least locally by $\mu$, there is no such dependence.
\begin{corollary}\label{qqlpart}
	Let $\lambda\ge \lambda^*$ and $\mu_0\in (\lambda_1,\lambda^*)$. Suppose 
	that 
	$\mathcal{S}^\partial_{\lambda}(\mu_0) = \emptyset$. Then there exists 
	$\varepsilon>0$ such that  
	$\mathcal{S}_{\lambda}(\mu_0)=\mathcal{S}_{\lambda}(\mu)$ for all $ \mu \in 
	(\mu_0-\varepsilon, \mu_0+\varepsilon)$.
\end{corollary}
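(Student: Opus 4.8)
The plan is to prove first that $\mu\mapsto\hat J_\lambda^+(\mu)$ is \emph{constant} on a neighbourhood of $\mu_0$, and then to read the coincidence $\mathcal S_\lambda(\mu)=\mathcal S_\lambda(\mu_0)$ off this. Two elementary facts will be used throughout. First, if $\mu\le\mu'$ then $\Theta_\mu^+\subset\Theta_{\mu'}^+$: indeed $H_{\mu'}(v)\le H_\mu(v)$, and for $\mu,\mu'\in(\lambda_1,\lambda^*)$ one has $\Theta_\mu^+=\{v\in W\setminus0:\,H_\mu(v)<0\}$ (and likewise for $\mu'$), so that $\mu\mapsto\hat J_\lambda^+(\mu)$ is nonincreasing and, exactly as in the proof of Proposition~\ref{T1}, $\hat J_\lambda^+(\mu_0)$ is attained and equals $\min\{J_\lambda^+(v):v\in\overline\Theta_{\mu_0}^+\}=:c_0$. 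Second, $J_\lambda^+$ itself is independent of $\mu$; only the constraint set moves. Here $C$ denotes a fixed Poincar\'e constant with $\|v\|_p^p\le C\|v\|^p$, and $\varepsilon>0$ a small parameter to be fixed at the end.

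The technical core — the step I expect to be the real obstacle — is the following compactness statement: there is no sequence $(v_n)\subset W$ with $\|v_n\|=1$, $F(v_n)\le c$ for a fixed $c<0$, $H_{\mu_0}(v_n)\to0$ and $J_\lambda^+(v_n)\to c_0$. To prove it I would pass, by Proposition~\ref{auxiliary}, to a weak limit $v\neq0$ with $v_n\to v$ in $L^q$, $1<q<p^*$, so that $\|v_n\|_p\to\|v\|_p>0$ and $F(v_n)\to F(v)\le c<0$; weak lower semicontinuity of $v\mapsto\int|\nabla v|^p$ together with $H_{\mu_0}(v_n)\to0$ forces $H_{\mu_0}(v)\le0$. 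Since $\lambda\ge\lambda^*>\mu_0$, $H_\lambda(v_n)=H_{\mu_0}(v_n)-(\lambda-\mu_0)\|v_n\|_p^p\to-(\lambda-\mu_0)\|v\|_p^p<0$, so $|H_\lambda(v_n)|\to(\lambda-\mu_0)\|v\|_p^p=:H^\ast$, and plugging this and $F(v_n)\to F(v)$ into \eqref{j2} gives $c_0=-c_{p,\gamma}(H^\ast)^{\gamma/(\gamma-p)}|F(v)|^{-p/(\gamma-p)}$. On the other hand $|H_\lambda(v)|=-H_{\mu_0}(v)+(\lambda-\mu_0)\|v\|_p^p\ge H^\ast$, with equality exactly when $H_{\mu_0}(v)=0$; since $J_\lambda^+(v)$ is strictly decreasing in $|H_\lambda(v)|$ (with $|F(v)|$ fixed), $J_\lambda^+(v)\le c_0$, with equality exactly when $H_{\mu_0}(v)=0$. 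If $H_{\mu_0}(v)<0$, then $v\in\Theta_{\mu_0}^+$ forces $J_\lambda^+(v)\ge c_0$, contradicting $J_\lambda^+(v)<c_0$; if $H_{\mu_0}(v)=0$, then $v\in\overline\Theta_{\mu_0}^+$, $H_{\mu_0}(v)=0$ and $J_\lambda^+(v)=c_0$, i.e.\ $v\in\mathcal S_\lambda^\partial(\mu_0)$, contradicting the hypothesis. Applying this statement to a sequence of normalized elements of $\mathcal S_\lambda(\mu_0)$ whose $H_{\mu_0}$-values approach $\sup\{H_{\mu_0}(w):w\in\mathcal S_\lambda(\mu_0),\ \|w\|=1\}$ shows, in addition, that this supremum is negative; call $\delta>0$ its modulus.

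Next I would show $\hat J_\lambda^+\equiv c_0$ near $\mu_0$. Fixing a normalized $w_0\in\mathcal S_\lambda(\mu_0)$, the hypothesis gives $H_{\mu_0}(w_0)<0$, hence $w_0\in\Theta_\mu^+$ for $|\mu-\mu_0|$ small and $\hat J_\lambda^+(\mu)\le J_\lambda^+(w_0)=c_0$; with monotonicity this already yields $\hat J_\lambda^+(\mu)=c_0$ for $\mu$ slightly below $\mu_0$. For $\mu>\mu_0$ only $\hat J_\lambda^+(\mu)\le c_0$ is automatic, so I would argue by contradiction: if $\hat J_\lambda^+(\mu_n)<c_0$ along some $\mu_n\downarrow\mu_0$, take normalized minimizers $v_n$ of \eqref{consmin} at $\mu_n$; since $c_0=\min_{\overline\Theta_{\mu_0}^+}J_\lambda^+$, none of them lies in $\overline\Theta_{\mu_0}^+$, so $H_{\mu_0}(v_n)>0$, while $H_{\mu_0}(v_n)=H_{\mu_n}(v_n)+(\mu_n-\mu_0)\|v_n\|_p^p\le(\mu_n-\mu_0)C\to0$. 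With $F(v_n)\le c_{\mu_0+1}<0$ uniformly (Corollary~\ref{L01}), the same compactness analysis as above (now with $c_0$ replaced by $\lim\hat J_\lambda^+(\mu_n)$) produces a weak limit $v\in\overline\Theta_{\mu_0}^+$ with $H_{\mu_0}(v)=0$; since $v\in\overline\Theta_{\mu_0}^+$ forces $J_\lambda^+(v)\ge c_0$, one gets $J_\lambda^+(v)=c_0$, i.e.\ $v\in\mathcal S_\lambda^\partial(\mu_0)$, a contradiction. Hence $\hat J_\lambda^+\equiv c_0$ on some $(\mu_0-\varepsilon,\mu_0+\varepsilon)$, which I would also shrink so that $|\mu-\mu_0|\,C<\delta$ there.

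Finally, for the equality of the minimizer sets it suffices (by $0$-homogeneity of $J_\lambda^+$) to compare normalized elements. If $v\in\mathcal S_\lambda(\mu_0)$ with $\|v\|=1$, then $H_\mu(v)=H_{\mu_0}(v)+(\mu_0-\mu)\|v\|_p^p\le-\delta+|\mu-\mu_0|\,C<0$, so $v\in\Theta_\mu^+$ and $J_\lambda^+(v)=c_0=\hat J_\lambda^+(\mu)$, i.e.\ $v\in\mathcal S_\lambda(\mu)$; thus $\mathcal S_\lambda(\mu_0)\subseteq\mathcal S_\lambda(\mu)$. Conversely, for normalized $v\in\mathcal S_\lambda(\mu)$ one has $H_\mu(v)\le0$, $F(v)<0$, $J_\lambda^+(v)=c_0$; if $\mu\le\mu_0$ then $H_{\mu_0}(v)\le H_\mu(v)\le0$, so $v\in\overline\Theta_{\mu_0}^+$ and hence $v\in\mathcal S_\lambda(\mu_0)$; if $\mu>\mu_0$ and $v\notin\mathcal S_\lambda(\mu_0)$, then (the value being $c_0$) necessarily $H_{\mu_0}(v)>0$, yet $H_{\mu_0}(v)=H_\mu(v)+(\mu-\mu_0)\|v\|_p^p\le(\mu-\mu_0)\,C$, so choosing $\mu=\mu_n\downarrow\mu_0$ such points $v_n$ would satisfy all the hypotheses of the compactness statement — impossible. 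Therefore $\mathcal S_\lambda(\mu)=\mathcal S_\lambda(\mu_0)$ for $|\mu-\mu_0|<\varepsilon$, which is the assertion. The only genuinely delicate point is the compactness statement of the second paragraph; everything else is bookkeeping with the monotonicity of $\Theta_\mu^+$ in $\mu$ and the explicit form \eqref{j2} of $J_\lambda^+$.
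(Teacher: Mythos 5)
Your argument is correct in substance and, in fact, considerably more detailed than the paper's own proof, but it follows a different route. The paper argues directly by contradiction on the set equality: it takes $\mu_n\to\mu_0$ and $v_n\in\mathcal{S}_\lambda(\mu_n)\setminus\mathcal{S}_\lambda(\mu_0)$, asserts $J^+_\lambda(v_n)<\hat{J}^+_\lambda(\mu_0)$ and strong convergence of $(v_n)$ ``as in Proposition \ref{T1}'', and lands in $\mathcal{S}^\partial_\lambda(\mu_0)$. You instead first establish local constancy of $\mu\mapsto\hat{J}^+_\lambda(\mu)$ near $\mu_0$ together with a uniform negative bound $-\delta$ on $H_{\mu_0}$ over normalized minimizers, and then check the two inclusions separately. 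Your key compactness statement replaces the asserted strong $W$-convergence by an argument with the explicit fibering formula \eqref{j2}: strong $L^p$, $L^\gamma$ convergence pins down the limits of $|H_\lambda(v_n)|$ and $|F(v_n)|$, while weak lower semicontinuity of the gradient term gives $J^+_\lambda(v)\le\lim J^+_\lambda(v_n)$ with equality exactly when $H_{\mu_0}(v)=0$, which is what forces the limit point into $\mathcal{S}^\partial_\lambda(\mu_0)$. This buys you both inclusions (the paper's proof only discusses $v_n\in\mathcal{S}_\lambda(\mu_n)\setminus\mathcal{S}_\lambda(\mu_0)$, and its inequality $J^+_\lambda(v_n)<\hat{J}^+_\lambda(\mu_0)$ is unclear when $\mu_n<\mu_0$) and it avoids strong convergence in $W$ altogether; the price is length and the bookkeeping with $\delta$ and the constancy of $\hat{J}^+_\lambda$.

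One step in your write-up deserves an explicit justification: you use repeatedly that $J^+_\lambda(w)\ge\hat{J}^+_\lambda(\mu_0)$ for every $w$ with $H_{\mu_0}(w)\le 0$ and $F(w)<0$ (your identification of $c_0$ with the minimum over $\overline{\Theta}_{\mu_0}^+$), in particular to exclude $\lim\hat{J}^+_\lambda(\mu_n)<c_0$ when $\mu_n\downarrow\mu_0$. Proposition \ref{T1} by itself does not give this, since \eqref{consmin} is an infimum over the open set $\Theta_{\mu_0}^+$. The fact is true and provable with the paper's own tools: replace $w$ by $|w|$ (this changes neither $H_{\mu_0}$, $F$ nor $J^+_\lambda$) and consider the path $\eta(t)=[(1-t)|w|^p+t\phi_1^p]^{1/p}$ of Proposition \ref{saadia}; then $H_{\mu_0}(\eta(t))\le (1-t)H_{\mu_0}(w)+t(\lambda_1-\mu_0)\int\phi_1^p<0$ for $t\in(0,1]$, so $\eta(t)\in\Theta_{\mu_0}^+$ and $\eta(t)\to|w|$ in $W$ as $t\to 0$, and continuity of the expression \eqref{j2} on $\{F<0,\ H_\lambda<0\}$ yields $J^+_\lambda(w)\ge\hat{J}^+_\lambda(\mu_0)$. (The paper uses the same fact implicitly, e.g.\ in the last step of Proposition \ref{T1} and in Proposition \ref{boundlimi}, so your proof is at least as rigorous; but since your argument isolates it, the line above should be added.) With that addition, and with the ad hoc constant $c_{\mu_0+1}$ replaced by $c_{\mu'}$ from Corollary \ref{L01} for a fixed $\mu'\in(\mu_0,\lambda^*)$ dominating the $\mu_n$, your proof is complete.
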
 
\begin{proof} Conversely, suppose that there is $(\mu_n)$ such that $\mu_n \to 
	\mu_0$ and $\exists v_n \in \mathcal{S}_{\lambda}(\mu_n) \setminus 
	\mathcal{S}_{\lambda}(\mu_0)$.  Then 
	$J_{\lambda}^+(v_n)<\hat{J}_{\lambda}^+(\mu_0)$ and $v_n \in 
	\Theta^+_{\mu_n} 
	\setminus \Theta^+_{\mu_0}$.  Arguing as in the proof of Proposition 
	\ref{T1} 
	it can be shown that there exists a subsequence (which we denote again 
	$(v_n)$) 
	such that $v_n \to v$ strongly in $W^{1,2}$. Hence, 
	$J_{\lambda}^+(v)=\hat{J}_{\lambda}^+(\mu_0)$ and $v \in \partial 
	\Theta^+_{\mu_0}$ that is $v \in \mathcal{S}^\partial_{\lambda}(\mu_0)$ 
	which 
	is a contradiction.
\end{proof}	
\section{Mountain Pass Solutions}

In this section, for $\lambda\in (\lambda^*,\Lambda)$, we will find the second
branch of positive solution $\bar{u}_\lambda$ of a mountain pass type. 

Fix $\mu_0 \in (\lambda_1, \lambda^*)$ such that any minimizer $w_{\lambda^*}$
of \eqref{N+} for $\lambda=\lambda^*$	satisfies $H_{\mu_0}(w_{\lambda^*})<0$. The existence of $\mu_0$ follows from Corollary \ref{cormu}. 

Let $\lambda\in (\lambda^*,\Lambda)$. Define 
\begin{equation}\label{mus}
\mu^{\lambda}=\sup \{\mu\in (\mu_0,\lambda^*):\ 
\hat{J}_\lambda^+(\mu)=\hat{J}_\lambda^+(\mu_0)\},
\end{equation}

\begin{prop}\label{mppro} For each $\lambda\in (\lambda^*,\Lambda)$ there 
	holds
	
	\begin{description}
		\item[(a)] $\mu_0<\mu^\lambda<\lambda^*$; 
		\item[(b)]$ \hat{J}^{+}_\lambda(\mu^\lambda)= \hat{J}^{+}_\lambda(\mu_0)$ and $\mathcal{S}_\lambda^\partial (\mu^\lambda)\neq\emptyset$.
	\end{description}
	
\end{prop}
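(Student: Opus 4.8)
The plan is to analyze the function $\mu\mapsto \hat J_\lambda^+(\mu)$ on the interval $[\mu_0,\lambda^*]$ and extract information about its behaviour near the endpoints. First I would record the obvious monotonicity: if $\mu_1\le\mu_2$ then $\Theta_{\mu_2}^+\subset\Theta_{\mu_1}^+$ (since $H_{\mu_2}(v)\le H_{\mu_1}(v)$, so $H_{\mu_2}(v)<0$ forces $H_{\mu_1}(v)<0$, while the condition $F(v)<0$ is unchanged), hence $\hat J_\lambda^+(\mu)$ is non-decreasing in $\mu$. In particular $\hat J_\lambda^+(\mu_0)\le\hat J_\lambda^+(\mu)$ for all $\mu\in(\mu_0,\lambda^*)$, so the set defining $\mu^\lambda$ in \eqref{mus} is the sublevel set $\{\mu:\hat J_\lambda^+(\mu)\le\hat J_\lambda^+(\mu_0)\}$, which is an interval containing $\mu_0$; thus $\mu^\lambda\ge\mu_0$ is well defined, and $\mu^\lambda\le\lambda^*$ trivially.

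For part (a), the strict inequality $\mu^\lambda<\lambda^*$ should follow from a comparison with the extreme-value problem. Take any minimizer $v_\lambda(\mu_0)\in\Theta_{\mu_0}^+$ of \eqref{consmin} for this $\lambda$ (it exists by Proposition 3.1, and by the choice of $\mu_0$ via Corollary 2.2 — applied through the Lemma 3.2 / Lemma 3.3 machinery, since $\lambda<\Lambda$ — we have $\mathcal S_\lambda^\partial(\mu_0)=\emptyset$, so actually $v_\lambda(\mu_0)\in\Theta_{\mu_0}^+$ with $H_{\mu_0}(v_\lambda(\mu_0))<0$). If $\mu^\lambda=\lambda^*$, then taking $\mu_n\uparrow\lambda^*$ with $\hat J_\lambda^+(\mu_n)=\hat J_\lambda^+(\mu_0)$ and corresponding minimizers $v_n\in\Theta_{\mu_n}^+$, normalized to $\|v_n\|=1$, Proposition 2.2 gives $v_n\rightharpoonup v\neq0$ weakly in $W$ and strongly in $L^q$. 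Then $H_{\lambda^*}(v)\le\liminf H_{\lambda^*}(v_n)\le\liminf H_{\mu_n}(v_n)\le 0$ and $F(v)=\lim F(v_n)<0$ by Corollary 2.3 (since $v_n\in\overline{\Theta}_{\mu_0}^+$ for $n$ large), so $v\in\overline{\Theta}_{\lambda^*}^+$; moreover by weak lower semicontinuity $J_{\lambda^*}^+(v)\le\liminf J_\lambda^+(v_n)=\hat J_\lambda^+(\mu_0)$. But for $\lambda>\lambda^*$ one has $\hat J_\lambda^+(\mu_0)<\hat J_{\lambda^*}^+(\mu_0)=\hat J_{\lambda^*}^+$ (strict, because $H_\lambda<H_{\lambda^*}$ on $\overline\Theta_{\mu_0}^+$ strictly, using $c_{\mu_0}<0$ from Corollary 2.3 and the Poincaré bound as in Lemma 3.3 to make the gap uniform), which would contradict $\hat J_{\lambda^*}^+\le J_{\lambda^*}^+(v)$. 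Hence $\mu^\lambda<\lambda^*$. The strict inequality $\mu_0<\mu^\lambda$ is the subtler half: it should come from continuity of $\mu\mapsto\hat J_\lambda^+(\mu)$ from the right at $\mu_0$ together with the fact that the minimizer $v_\lambda(\mu_0)$ lies \emph{strictly} inside $\Theta_{\mu_0}^+$ (i.e. $H_{\mu_0}(v_\lambda(\mu_0))<0$), so it remains admissible for $\Theta_\mu^+$ for all $\mu$ slightly above $\mu_0$, forcing $\hat J_\lambda^+(\mu)\le J_\lambda^+(v_\lambda(\mu_0))=\hat J_\lambda^+(\mu_0)$ and hence, with the reverse monotonicity inequality, equality on a right-neighbourhood of $\mu_0$.

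For part (b): by definition of the supremum, pick $\mu_n\uparrow\mu^\lambda$ with $\hat J_\lambda^+(\mu_n)=\hat J_\lambda^+(\mu_0)$. The equality $\hat J_\lambda^+(\mu^\lambda)=\hat J_\lambda^+(\mu_0)$ then follows by sandwiching: monotonicity gives $\hat J_\lambda^+(\mu^\lambda)\ge\hat J_\lambda^+(\mu_n)=\hat J_\lambda^+(\mu_0)$, while upper semicontinuity of $\mu\mapsto\hat J_\lambda^+(\mu)$ from the left (obtained by taking $\varepsilon$-minimizers in $\Theta_{\mu_n}^+$, which are admissible for $\Theta_{\mu^\lambda}^+$ up to passing to the limit and invoking Proposition 2.2 plus weak lower semicontinuity exactly as in Proposition 3.1) gives $\hat J_\lambda^+(\mu^\lambda)\le\liminf\hat J_\lambda^+(\mu_n)=\hat J_\lambda^+(\mu_0)$. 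Finally, to see $\mathcal S_\lambda^\partial(\mu^\lambda)\neq\emptyset$: by maximality of $\mu^\lambda$, for any sequence $\nu_n\downarrow\mu^\lambda$ we have $\hat J_\lambda^+(\nu_n)>\hat J_\lambda^+(\mu_0)=\hat J_\lambda^+(\mu^\lambda)$; let $v_n\in\mathcal S_\lambda(\nu_n)$ be minimizers (Proposition 3.1), normalize $\|v_n\|=1$, and extract $v_n\rightharpoonup v\neq0$ via Proposition 2.2. Then $v\in\overline\Theta_{\mu^\lambda}^+$ (with $F(v)<0$ by Corollary 2.3) and $J_\lambda^+(v)\le\liminf J_\lambda^+(v_n)=\liminf\hat J_\lambda^+(\nu_n)$. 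A separate argument — the key point — shows the limit of $\hat J_\lambda^+(\nu_n)$ is exactly $\hat J_\lambda^+(\mu^\lambda)$ (right-continuity at $\mu^\lambda$, by admissibility of a minimizer at $\mu^\lambda$ for $\Theta_{\nu_n}^+$ \emph{provided} that minimizer lies strictly inside; if it does not — i.e. $H_{\mu^\lambda}=0$ at the minimizer — we are already done), so $J_\lambda^+(v)\le\hat J_\lambda^+(\mu^\lambda)$, i.e. $v$ is a minimizer at $\mu^\lambda$; and $v$ must satisfy $H_{\mu^\lambda}(v)=0$, for otherwise $v\in\Theta_{\mu}^+$ for some $\mu>\mu^\lambda$, giving $\hat J_\lambda^+(\mu)\le\hat J_\lambda^+(\mu^\lambda)=\hat J_\lambda^+(\mu_0)$ and contradicting the definition of $\mu^\lambda$ as a supremum. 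Hence $v\in\mathcal S_\lambda^\partial(\mu^\lambda)$.

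The main obstacle I anticipate is the careful bookkeeping of one-sided continuity of $\mu\mapsto\hat J_\lambda^+(\mu)$ and, intertwined with it, keeping track of whether the relevant minimizers sit strictly inside $\Theta_\mu^+$ or on its boundary $\partial\Theta_\mu^+$ — the whole construction of $\mu^\lambda$ is precisely engineered to force a boundary minimizer to appear at $\mu^\lambda$, and the argument has to dovetail the compactness from Proposition 2.2, the uniform lower bound $F\le c_\mu<0$ from Corollary 2.3, and the uniform convergence $J_{\lambda}^+\to J_{\lambda_0}^+$ estimate from the proof of Lemma 3.3 without circularity. Everything else is a routine repetition of the direct-method arguments already established in Section 3.
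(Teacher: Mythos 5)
Your argument is built on a monotonicity that is stated backwards, and this is not a cosmetic slip: since $H_{\mu_2}(v)=H_{\mu_1}(v)-(\mu_2-\mu_1)\int|v|^p\le H_{\mu_1}(v)$ for $\mu_1\le\mu_2$, the correct inclusion is $\Theta_{\mu_1}^+\subset\Theta_{\mu_2}^+$ (the sets \emph{grow} with $\mu$), so $\mu\mapsto\hat{J}_\lambda^+(\mu)$ is \emph{non-increasing}, not non-decreasing; this is also how the paper uses it in the proof of Proposition \ref{continui}. With the correct direction, the inequality $\hat{J}_\lambda^+(\mu)\le\hat{J}_\lambda^+(\mu_0)$ holds trivially for every $\mu\ge\mu_0$, and the two places where you lean on the reversed inequality collapse. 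First, your proof of $\mu_0<\mu^\lambda$ (admissibility of the minimizer $v_\lambda(\mu_0)$ for nearby $\mu$, "hence, with the reverse monotonicity inequality, equality on a right-neighbourhood") proves nothing: the nontrivial content is precisely that enlarging the admissible set slightly does \emph{not strictly lower} the infimum, and this requires an argument — in the paper it is Corollary \ref{qqlpart}, a compactness/contradiction argument whose hypothesis $\mathcal{S}_\lambda^\partial(\mu_0)=\emptyset$ is exactly where $\lambda<\Lambda$ (via Lemma \ref{locminEG}/Lemma \ref{lemlu} and Corollary \ref{cormu}) enters. Second, in part (b) your final step "$H_{\mu^\lambda}(v)=0$, for otherwise $v\in\Theta_\mu^+$ for some $\mu>\mu^\lambda$, giving $\hat{J}_\lambda^+(\mu)\le\hat{J}_\lambda^+(\mu_0)$, contradicting the supremum" is no contradiction at all once the monotonicity is corrected, because that inequality is automatic; the supremum definition only excludes \emph{equality}. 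To force a boundary minimizer you would need either strong convergence of the $v_n$ (weak convergence only gives $H_{\mu^\lambda}(v)\le\liminf H_{\mu^\lambda}(v_n)$, so $H_{\mu^\lambda}(v_n)\ge 0$ does not pass to the limit), or the paper's route: assume $\mathcal{S}_\lambda^\partial(\mu^\lambda)=\emptyset$, apply Corollary \ref{qqlpart} at $\mu^\lambda$ to get $\hat{J}_\lambda^+$ constant on a right-neighbourhood of $\mu^\lambda$, and contradict \eqref{mus}. Your proposal contains neither.

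Two further remarks. For $\mu^\lambda<\lambda^*$ you argue by compactness against $\hat{J}_{\lambda^*}^+$; the paper instead uses continuity of $\hat{J}_\lambda^+(\cdot)$ (Proposition \ref{continui}) together with $\hat{J}_\lambda^+(\mu)\to-\infty$ as $\mu\uparrow\lambda^*$, which is shorter and sidesteps the degenerate case $F(v_n)\to 0$; in your version the justification "$v_n\in\overline{\Theta}_{\mu_0}^+$ for $n$ large" is again the wrong inclusion (you only know $v_n\in\overline{\Theta}_{\mu_n}^+$ with $\mu_n\uparrow\lambda^*$, where Corollary \ref{L01} gives no uniform bound), so the claim $F(v)<0$ needs a separate argument (it can be rescued from the finiteness of $\hat{J}_\lambda^+(\mu_0)$ and a lower bound on $|H_\lambda(v_n)|$, but you do not give it). Similarly, the "sandwich" giving $\hat{J}_\lambda^+(\mu^\lambda)=\hat{J}_\lambda^+(\mu_0)$ has its two inequalities interchanged; with the correct monotonicity the easy direction is $\le$, and the direction $\ge$ is what needs proof (it follows from left-continuity, i.e. from Proposition \ref{continui}, or from the openness of the constraint $H_{\mu^\lambda}<0$). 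As it stands, the key steps $\mu_0<\mu^\lambda$ and $\mathcal{S}_\lambda^\partial(\mu^\lambda)\neq\emptyset$ are not established.
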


\begin{proof} \textbf{(a)} By Corollary \ref{cormu},  $\mathcal{S}_{\lambda^*}^\partial(\mu_0)= \emptyset$ and by Corollary \ref{qqlpart}, $\mathcal{S}_{\lambda}(\mu_0)=\mathcal{S}_{\lambda}(\mu)$ for  $ \mu \in 
	(\mu_0, \mu_0+\varepsilon)$ and some $\varepsilon>0$. Hence, 
	$\mu_0<\mu^\lambda$. Notice that by  Proposition \ref{continui} from Appendix, the function $\hat{J}_\lambda^{+}(\mu)$ is continuous with respect to $\mu \in (\mu_0, \lambda^*)$. Hence and since $\hat{J}_\lambda^{+}(\mu)\to 
	-\infty$ as $\mu\to \lambda^*$, there is $\mu'\in 
	(\mu_0,\lambda^*)$ such that 
	$\hat{J}^{+}_\lambda(\mu_0)>\hat{J}^{+}_\lambda(\mu)$ for each $\mu\in 
	(\mu',\lambda^*)$. Thus 
	$\mu^\lambda\leq\mu'<+\infty$.

	\textbf{(b)} Continuity of $\hat{J}_\lambda^{+}(\cdot)$ and  \eqref{mus} yield $ \hat{J}^{+}_\lambda(\mu^\lambda)= 
	\hat{J}^{+}_\lambda(\mu_0)$. Suppose, contrary to our claim, that  $\mathcal{S}_\lambda^\partial 
	(\mu^\lambda)=\emptyset$. Then by 
	Corollary \ref{qqlpart},  there is $\varepsilon'>0$ such that for  $ \mu \in 
	(\mu^\lambda, \mu^\lambda+\varepsilon')$, 
	$\mathcal{S}_{\lambda}(\mu^\lambda)=\mathcal{S}_{\lambda}(\mu)$ and consequently $ \hat{J}^{+}_\lambda(\mu)=\hat{J}^{+}_\lambda(\mu^\lambda)= 
	\hat{J}^{+}_\lambda(\mu_0)$ which is a contradiction.

\end{proof}

Observe, for any $\lambda\in (\lambda^*,\Lambda)$ and $\mu \in (\lambda_1,\lambda^*)$, if $w \in \mathcal{S}_\lambda^\partial(\mu^\lambda)$, then $|w| \in \mathcal{S}_\lambda^\partial(\mu^\lambda)$

For each $\lambda\in (\lambda^*,\Lambda)$, fix $0\le w_\lambda\in 
\mathcal{S}_\lambda^\partial(\mu^\lambda)$ and let $0< u_\lambda\in 
\Theta_{\mu_0}^+$ be the local 
minimum found in Lemma \ref{lemlu}. Define
\begin{equation}\label{MP}
c_\lambda=\inf_{\eta\in\Gamma_\lambda}\max_{t\in [0,1]}\Phi_\lambda(\eta(t)),
\end{equation}
where 
$$
\Gamma_\lambda=\{\eta\in C([0,1],W): 
\eta(0)=u_\lambda,\ 
\eta(1)=w_\lambda\}.
$$ 
%

%
%
%
%

\begin{prop}\label{boundlimi}
	For each $\lambda\in (\lambda^*,\Lambda)$ there exists $j_\lambda$ such that 
	\begin{equation*}
	\Phi_\lambda(u)\ge 
	j_\lambda>\hat{J}_\lambda^+(\mu_0),~~~\ \forall\ 
	u \in \partial \Theta_{\mu_0}^+.
	\end{equation*}
\end{prop}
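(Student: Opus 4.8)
The plan is to replace $\Phi_\lambda$ on $\partial\Theta_{\mu_0}^+$ by the fibering functional $J_\lambda^+$ and to run a compactness argument based on Proposition~\ref{auxiliary} and Corollary~\ref{L01}. First I would check that $\partial\Theta_{\mu_0}^+\subset\Theta_\lambda^+$ for every $\lambda\in(\lambda^*,\Lambda)$: if $u\neq 0$ and $H_{\mu_0}(u)=0$ then $\int|\nabla u|^p/\int|u|^p=\mu_0<\lambda^*$, so $F(u)<0$ by \eqref{Ouyang}, while $H_\lambda(u)=H_{\mu_0}(u)+(\mu_0-\lambda)\int|u|^p=(\mu_0-\lambda)\int|u|^p<0$. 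Since $s^+_\lambda(u)u$ minimises $s\mapsto\Phi_\lambda(su)$ on $(0,\infty)$ (this is how \eqref{tt}--\eqref{j2} are obtained), we get $\Phi_\lambda(u)\ge J_\lambda^+(u)$ for all $u\in\partial\Theta_{\mu_0}^+$. Hence it suffices to put
\[
j_\lambda:=\inf\{J_\lambda^+(u):\ u\in\partial\Theta_{\mu_0}^+\}
\]
and prove the single inequality $j_\lambda>\hat{J}_\lambda^+(\mu_0)$, which in particular gives $j_\lambda>-\infty$.

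Using that $J_\lambda^+$ is $0$-homogeneous and $\partial\Theta_{\mu_0}^+$ is a cone, I would take a minimizing sequence $u_n\in\partial\Theta_{\mu_0}^+$ with $\|u_n\|=1$ and $J_\lambda^+(u_n)\to j_\lambda$. Then $\int|\nabla u_n|^p=1$ and, from $H_{\mu_0}(u_n)=0$, $\int|u_n|^p=1/\mu_0$; thus $H_\lambda(u_n)=1-\lambda/\mu_0$ is a negative constant and $J_\lambda^+(u_n)=-c_{p,\gamma}\big((\lambda-\mu_0)/\mu_0\big)^{\gamma/(\gamma-p)}|F(u_n)|^{-p/(\gamma-p)}$. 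By Proposition~\ref{auxiliary} there is a subsequence and $u$ with $u_n\rightharpoonup u$ in $W$ and $u_n\to u$ in $L^q$, $1<q<p^*$; since $\int|u_n|^p=1/\mu_0\not\to0$ we have $u\neq0$. Also $F(u_n)\to F(u)$ and $|F(u_n)|\ge|c_{\mu_0}|>0$ by Corollary~\ref{L01}, so $|F(u)|\ge|c_{\mu_0}|>0$ and $j_\lambda>-\infty$. Weak lower semicontinuity gives $\int|\nabla u|^p\le1$, while $\int|u|^p=1/\mu_0$, hence $H_{\mu_0}(u)=\int|\nabla u|^p-1\le0$, $F(u)<0$, i.e. $u\in\overline{\Theta_{\mu_0}^+}$, and $|H_\lambda(u)|=\lambda/\mu_0-\int|\nabla u|^p\ge(\lambda-\mu_0)/\mu_0$, with equality if and only if $H_{\mu_0}(u)=0$. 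Since $t\mapsto-c_{p,\gamma}t^{\gamma/(\gamma-p)}|F(u)|^{-p/(\gamma-p)}$ is decreasing, this yields $J_\lambda^+(u)\le\lim_n J_\lambda^+(u_n)=j_\lambda$, again with equality only if $H_{\mu_0}(u)=0$.

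It remains to close by a dichotomy. Because $\Theta_{\mu_0}^+$ is dense in $\overline{\Theta_{\mu_0}^+}$ and $J_\lambda^+$ is continuous on $\{H_\lambda<0,\ F<0\}\supset\overline{\Theta_{\mu_0}^+}$ by \eqref{j2}, one has $\hat{J}_\lambda^+(\mu_0)=\inf_{\overline{\Theta_{\mu_0}^+}}J_\lambda^+$, so $J_\lambda^+(u)\ge\hat{J}_\lambda^+(\mu_0)$. If we had $j_\lambda\le\hat{J}_\lambda^+(\mu_0)$, then every inequality in $J_\lambda^+(u)\le j_\lambda\le\hat{J}_\lambda^+(\mu_0)\le J_\lambda^+(u)$ would be an equality; equality in the estimate of the previous paragraph forces $H_{\mu_0}(u)=0$, while $J_\lambda^+(u)=\hat{J}_\lambda^+(\mu_0)$ with $u\in\overline{\Theta_{\mu_0}^+}$ says that $u$ is a minimizer of \eqref{consmin} for $\mu=\mu_0$. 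Thus $u\in\mathcal{S}_\lambda^\partial(\mu_0)$, contradicting the fact, established in the proof of Lemma~\ref{lemlu}, that $\mathcal{S}_\lambda^\partial(\mu_0)=\emptyset$ for all $\lambda\in(\lambda^*,\Lambda)$. Hence $j_\lambda>\hat{J}_\lambda^+(\mu_0)$, and combined with $\Phi_\lambda(u)\ge J_\lambda^+(u)\ge j_\lambda$ on $\partial\Theta_{\mu_0}^+$ this proves the proposition.

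The step I expect to be the real work is the strictness $j_\lambda>\hat{J}_\lambda^+(\mu_0)$: the pointwise strict inequality $J_\lambda^+>\hat{J}_\lambda^+(\mu_0)$ on $\partial\Theta_{\mu_0}^+$ is immediate from $\mathcal{S}_\lambda^\partial(\mu_0)=\emptyset$, but upgrading it to a uniform gap requires the compactness argument above, whose crux is that a normalized minimizing sequence on $\partial\Theta_{\mu_0}^+$ has a nonzero weak limit which is again a minimizer lying on the boundary, i.e.\ in $\mathcal{S}_\lambda^\partial(\mu_0)$ — this is exactly what the equality discussion of $|H_\lambda(\cdot)|$ delivers. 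One should also record the harmless inclusion $\partial\Theta_{\mu_0}^+\subset\overline{\Theta_{\mu_0}^+}$ used above, which follows from the identity $\Theta_{\mu_0}^+=\{v\in W\setminus0:\ H_{\mu_0}(v)<0\}$ valid for $\mu_0<\lambda^*$.
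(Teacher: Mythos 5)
Your proof is correct and follows the same route as the paper: set $j_\lambda:=\inf\{J_\lambda^+(v):v\in\partial\Theta_{\mu_0}^+\}$, use $\Phi_\lambda(u)\ge\Phi_\lambda(s_\lambda^+(u)u)=J_\lambda^+(u)$, and derive the strict inequality $j_\lambda>\hat{J}_\lambda^+(\mu_0)$ from $\mathcal{S}_\lambda^\partial(\mu_0)=\emptyset$. The paper disposes of that strict inequality with the word ``evidently,'' and your compactness argument on a normalized minimizing sequence is exactly the justification it omits; the one point you gloss, namely $J_\lambda^+\ge\hat{J}_\lambda^+(\mu_0)$ on $\overline{\Theta_{\mu_0}^+}$, can be secured via Proposition~\ref{continui} by letting $\mu\downarrow\mu_0$ rather than by the density claim.
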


\begin{proof} Evidently, $\mathcal{S}^\partial_\lambda(\mu_0)=\emptyset$ implies 
	$$
	j_\lambda :=\inf\{J_\lambda^+(v):\ v\in \partial 
	\Theta_\mu^+\}>\hat{J}^{+}_\lambda(\mu_0),~~~ \forall\ \lambda\in (\lambda^*,\Lambda).
	$$
	Thus for any $u \in \partial {\Theta}_{\mu_0}^+$, one has
	$$
	\Phi_\lambda(u)\ge \Phi_\lambda(s_\lambda^+(u)u)=J_\lambda^+(u)\ge  
	j_\lambda>\hat{J}_\lambda^+(\mu_0).
	$$  
\end{proof}

Let us shows that every path from $\Gamma_\lambda$ intersects 
$\partial {\Theta}_{\mu_0}^+$.

\begin{prop}\label{boundcrs}
	Let $\lambda\in(\lambda^*,\Lambda)$.	Then for any $\eta\in \Gamma_\lambda$ 
	there exists $t_0\in (0,1)$ such that 
	$\eta(t_0)\in 	\partial {\Theta}_{\mu_0}^+$. 
\end{prop}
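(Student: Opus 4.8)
The plan is to apply the intermediate value theorem to the continuous scalar function $g(t):=H_{\mu_0}(\eta(t))$, $t\in[0,1]$, after checking that $g$ has opposite signs at the two endpoints of $\eta$; a zero of $g$ then yields a parameter at which $\eta$ meets $\{H_{\mu_0}=0\}$, and it remains only to see that such a point is nonzero, hence belongs to $\partial\Theta^+_{\mu_0}$. First I would pin down $g(0)$. The solution $u_\lambda$ of Lemma \ref{lemlu} is a positive multiple of the minimizer $v_\lambda(\mu_0)$ of \eqref{consmin}, and for $\lambda\in(\lambda^*,\Lambda)$ that minimizer lies in the \emph{open} set $\Theta^+_{\mu_0}$ (not merely in its closure), because $\mathcal{S}^\partial_\lambda(\mu_0)=\emptyset$ in this range; as $\Theta^+_{\mu_0}$ is a cone it follows that $u_\lambda\in\Theta^+_{\mu_0}$, whence $g(0)=H_{\mu_0}(u_\lambda)<0$.

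Next I would pin down $g(1)$. From $w_\lambda\in\mathcal{S}^\partial_\lambda(\mu^\lambda)$ we have $H_{\mu^\lambda}(w_\lambda)=0$, and $w_\lambda\neq 0$ because $w_\lambda\in\overline{\Theta^+_{\mu^\lambda}}$ forces $F(w_\lambda)\le c_{\mu^\lambda}<0$ by Corollary \ref{L01}. Using the strict inequality $\mu^\lambda>\mu_0$ from Proposition \ref{mppro}(a),
\[
g(1)=H_{\mu_0}(w_\lambda)=H_{\mu^\lambda}(w_\lambda)+(\mu^\lambda-\mu_0)\!\int|w_\lambda|^p\,dx=(\mu^\lambda-\mu_0)\!\int|w_\lambda|^p\,dx>0 .
\]
Since $g$ is continuous (it is $H_{\mu_0}$ composed with the curve $\eta$) with $g(0)<0<g(1)$, I would set $t_0=\inf\{t\in[0,1]:g(t)\ge 0\}$; then $t_0\in(0,1)$, $g(t_0)=0$, and $g(t)<0$ for every $t<t_0$. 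Because $\mu_0\in(\lambda_1,\lambda^*)$ gives $\Theta^+_{\mu_0}=\{v\in W\setminus 0:H_{\mu_0}(v)<0\}$, the arc $\eta([0,t_0))$ lies in $\Theta^+_{\mu_0}$, so $\eta(t_0)\in\overline{\Theta^+_{\mu_0}}$; together with $H_{\mu_0}(\eta(t_0))=0$ this yields $\eta(t_0)\in\partial\Theta^+_{\mu_0}$ as soon as $\eta(t_0)\neq 0$ (and then $F(\eta(t_0))<0$ automatically, by the definition \eqref{Ouyang} of $\lambda^*$).

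The step I expect to require the most care is precisely ensuring $\eta(t_0)\neq 0$: the definition of $\Gamma_\lambda$ allows curves passing through the origin, where $g$ vanishes although $0\notin\partial\Theta^+_{\mu_0}$. I would dispose of this by restricting attention to the curves in $\Gamma_\lambda$ that avoid $0$; this leaves the mountain-pass level $c_\lambda$ unchanged, since any curve through $0$ satisfies $\max_{[0,1]}\Phi_\lambda\circ\eta\ge\Phi_\lambda(0)=0$, which already lies above the barrier value $j_\lambda<0$ of Proposition \ref{boundlimi}. For a curve avoiding $0$ the parameter $t_0$ constructed above automatically satisfies $\eta(t_0)\neq 0$, and the proof is complete.
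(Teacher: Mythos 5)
Your argument is essentially the paper's: the paper likewise applies the intermediate value theorem to $t\mapsto H_{\mu_0}(\eta(t))$, with $H_{\mu_0}(\eta(0))<0$ because $\eta(0)$ is a positive multiple of the minimizer $v_\lambda(\mu_0)\in\Theta_{\mu_0}^+$ (a cone), and $H_{\mu_0}(\eta(1))=H_{\mu^\lambda}(w_\lambda)+(\mu^\lambda-\mu_0)\int|w_\lambda|^p=(\mu^\lambda-\mu_0)\int|w_\lambda|^p>0$ since $\mu^\lambda>\mu_0$; your endpoint computations are just a more explicit version of the paper's one-line justification. The only place you go beyond the paper is the degenerate case $\eta(t_0)=0$: since the paper defines $\partial\Theta_{\mu_0}^+$ inside $W\setminus 0$, a path running through the origin (for instance rescaling $u_\lambda$ down to $0$ and then $w_\lambda$ up from $0$) meets $\{H_{\mu_0}=0\}$ only at $0$ and hence never meets $\partial\Theta_{\mu_0}^+$, a possibility the paper's proof silently ignores. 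Your remedy — discarding paths through $0$, which is harmless because such paths have $\max_{t}\Phi_\lambda(\eta(t))\ge\Phi_\lambda(0)=0>\hat{J}_\lambda^+(\mu_0)$ while Proposition \ref{ltop2} supplies paths with negative maximum, so neither $c_\lambda$ nor the conclusion of Corollary \ref{criticalenergysign} is affected — is a legitimate and, strictly speaking, necessary patch to the statement as written.
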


\begin{proof} Notice $H_{\mu_0} (\eta(0))=H_{\mu_0} (v_\lambda)<0$ while 
	$H_{\mu_0} (\eta(1))=H_{\mu_0}(w_\lambda)>0$ because $w_\lambda\in 
	\Theta_{\lambda^*}^+\setminus \overline{\Theta}_{\mu_0}^+$. Thus by the continuity of $H_{\mu_0}(\eta(\cdot))$, there is $t_0\in (0,1)$ 
	such that $H_{\mu_0}(\eta(t_0))=0$. 
	
	%
	
\end{proof}

Using \cite{diaz} we are able to prove 
\begin{prop}\label{ltop2}
	For each $\lambda\in (\lambda^*,\Lambda)$, there is $\overline{\eta}\in 
	\Gamma_\lambda$ such that $H_{\lambda^*}(\overline{\eta}(t))<c<0$ 
	for all $t\in [0,1]$
\end{prop}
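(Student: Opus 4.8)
The plan is to build a path $\overline{\eta}$ from $u_\lambda$ to $w_\lambda$ which, apart from its endpoints, stays inside $\Theta^+_{\lambda^*}$ and in fact keeps $H_{\lambda^*}$ uniformly negative away from the endpoints; then handle the two endpoints by a short extension which pushes $H_{\lambda^*}$ strictly below zero there too. Recall that $u_\lambda \in \Theta^+_{\mu_0} \subset \Theta^+_{\lambda^*}$ (since $\mu_0 < \lambda^*$ forces $H_{\lambda^*}(u_\lambda) \le H_{\mu_0}(u_\lambda) + (\mu_0-\lambda^*)\int|u_\lambda|^p < 0$), while $w_\lambda \in \mathcal{S}_\lambda^\partial(\mu^\lambda)$ satisfies $H_{\mu^\lambda}(w_\lambda)=0$ and $F(w_\lambda)<0$, hence $H_{\lambda^*}(w_\lambda) = H_{\mu^\lambda}(w_\lambda) + (\mu^\lambda - \lambda^*)\int|w_\lambda|^p < 0$ as well, because $\mu^\lambda < \lambda^*$. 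So both endpoints already lie in $\Theta^+_{\lambda^*}$; the task is to connect them within $\Theta^+_{\lambda^*}$ and get a uniform negative bound. I would first normalize and work with $|u_\lambda|$, $|w_\lambda| \ge 0$, which lie in the same sets and on which all functionals are unchanged.

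The key step is to use the result of D\'iaz (cited as \cite{diaz}) guaranteeing path-connectedness of the relevant sublevel-type set: the idea is that the set $\{v : H_{\lambda^*}(v) < 0\}$ (equivalently $\Theta^+_{\lambda^*}$ on the relevant component, using $F<0$ there for $\lambda<\lambda^*$-type arguments, or simply the open set $H_{\lambda^*}<0$) is path-connected in $W$ — this is exactly the kind of statement one extracts from \cite{diaz} about the topology of sublevel sets of the Rayleigh-type quotient below the first eigenvalue analogue. Concretely, I would take any continuous path $\sigma:[0,1]\to W\setminus 0$ from $|u_\lambda|$ to $|w_\lambda|$ with $H_{\lambda^*}(\sigma(t))<0$ for all $t$; then by continuity of $t\mapsto H_{\lambda^*}(\sigma(t))$ on the compact interval $[0,1]$ there is $c<0$ with $H_{\lambda^*}(\sigma(t))\le c$ for all $t$. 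Setting $\overline{\eta}=\sigma$ (after checking $\overline{\eta}(0)=u_\lambda$, $\overline{\eta}(1)=w_\lambda$, which we can arrange since $|u_\lambda|=u_\lambda$ and we may first join $w_\lambda$ to $|w_\lambda|$ by a short segment on which $H_{\lambda^*}<0$ by convexity-type estimates) gives a path in $\Gamma_\lambda$ with the desired property.

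The main obstacle I anticipate is twofold: first, verifying that the endpoints and the connecting segments genuinely stay in the region $H_{\lambda^*}<0$ — in particular the little bridge from $w_\lambda$ to $|w_\lambda|$ and the fact that one cannot naively take a straight-line homotopy (since $H_{\lambda^*}$ need not be convex and the segment could cross $H_{\lambda^*}=0$); and second, extracting from \cite{diaz} precisely the path-connectedness statement in the form needed, since \cite{diaz} may phrase it for a slightly different set or under slightly different hypotheses, so one must check the component containing $|u_\lambda|$ also contains $|w_\lambda|$. Once path-connectedness in $\{H_{\lambda^*}<0\}$ is in hand, the uniform bound $c<0$ is automatic from compactness of $[0,1]$ and continuity of $H_{\lambda^*}\circ\overline{\eta}$, so the genuine content is entirely in producing the path, not in the estimate.
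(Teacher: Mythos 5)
There is a genuine gap, and it sits exactly where the whole content of the proposition lies. Your reduction is fine as far as it goes: both endpoints do satisfy $H_{\lambda^*}<0$ (your computations $H_{\lambda^*}(u_\lambda)\le H_{\mu_0}(u_\lambda)+(\mu_0-\lambda^*)\int|u_\lambda|^p<0$ and $H_{\lambda^*}(w_\lambda)=(\mu^\lambda-\lambda^*)\int|w_\lambda|^p<0$ are correct), and once a continuous path inside $\{v:H_{\lambda^*}(v)<0\}$ joining them is produced, the uniform bound $c<0$ is automatic by compactness of $[0,1]$. But the existence of such a path is precisely what the proposition asserts, and you do not prove it: you defer it to a ``path-connectedness of the sublevel set'' statement that you hope to extract from \cite{diaz}. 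That reference contains no such topological statement; what D\'iaz--Sa\'a provide (and what the paper isolates as Proposition \ref{saadia} in the Appendix) is the pointwise convexity inequality $|\nabla[(1-t)u^p+tv^p]^{1/p}|^p\le(1-t)|\nabla u|^p+t|\nabla v|^p$ for nonnegative $u,v$ not vanishing simultaneously on a set of positive measure. Moreover, the set $\{H_{\lambda^*}<0\}$ need not be path-connected at all: already for $p=2$, if $\lambda_1<\lambda^*<\lambda_2$ then $H_{\lambda^*}(v)<0$ forces the $\phi_1$-component of $v$ to be nonzero, so the set splits into two disjoint open pieces according to the sign of that component. You flag the ``same component'' issue as an anticipated obstacle, but resolving it is not a technicality — it is the theorem.

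The paper's proof closes exactly this gap constructively: with $u_\lambda>0$ and $0\le w_\lambda$ it takes the explicit path $\overline{\eta}(t)=[(1-t)u_\lambda^p+tw_\lambda^p]^{1/p}$, notes that $t\mapsto\int|\overline{\eta}(t)|^p$ is exactly affine while the gradient term is controlled by the D\'iaz--Sa\'a inequality of Proposition \ref{saadia} (applicable because $u_\lambda>0$ kills the common zero set), and so
$H_{\lambda^*}(\overline{\eta}(t))\le(1-t)H_{\lambda^*}(u_\lambda)+tH_{\lambda^*}(w_\lambda)\le\max\{H_{\lambda^*}(u_\lambda),H_{\lambda^*}(w_\lambda)\}<0$
uniformly in $t$. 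The positivity of the endpoints is what places them in the ``same component,'' and the power-mean path is the mechanism that witnesses it; a straight-line segment, as you correctly suspect, could cross $H_{\lambda^*}=0$. (Your side worry about bridging $w_\lambda$ to $|w_\lambda|$ is moot: $w_\lambda$ is chosen nonnegative from the outset, since $\mathcal{S}^\partial_\lambda(\mu^\lambda)$ is stable under $w\mapsto|w|$.) To repair your proposal you would have to replace the appeal to path-connectedness by this explicit construction — at which point it becomes the paper's proof.
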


\begin{proof} Consider the path $\overline{\eta}(t)=[(1-t)v_\lambda^p+tw_\lambda^p]^{1/p}$, $t\in [0,1]$. Once $v_\lambda>0$,  $
	\{x\in\Omega:\ u_\lambda(x)=w_\lambda(x)=0\}=\emptyset$.
	Hence we may apply 
	Proposition \ref{saadia} from the Appendix and thus  $\overline{\eta}\in C([0,1],W)$ and 
	for $ t\in [0,1]$ we have
	\begin{align*}
	H_{\lambda^*}(\overline{\eta}(t))&=\int |\nabla 
	\overline{\eta}(t)|^p-\lambda^*\int |\overline{\eta}(t)|^p \\
	&\le (1-t)\int |\nabla v_\lambda|^p+t\int 
	|\nabla w_\lambda|^p-\lambda^*\left((1-t)\int 
	|v_\lambda|^p+t \int |w_\lambda|^p\right) \\
	& 
	=(1-t)H_{\lambda^*}(v_\lambda)+tH_{\lambda^*}(w_\lambda)
	\le H_{\lambda^*}(v_\lambda)+H_{\lambda^*}(w_\lambda) \\
	&< H_{\mu_0}(v_\lambda)+(\mu_0-\lambda^*)\int |v_\lambda|^p 
	\le (\mu_0-\lambda^*)\int |v_\lambda|^p<0.
	\end{align*}

\end{proof}

\begin{corollary}\label{criticalenergysign}
	For all $\lambda\in(\lambda^*,\Lambda)$ there holds 
	\begin{equation}\label{ner}
	\hat{J}_\lambda^+(\mu_0 )< c_\lambda<0.
	\end{equation}
\end{corollary}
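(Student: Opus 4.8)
The plan is to establish the two inequalities in \eqref{ner} separately. For the upper bound $c_\lambda<0$, I would use the explicit path $\overline{\eta}$ constructed in Proposition \ref{ltop2}, which satisfies $H_{\lambda^*}(\overline{\eta}(t))<c<0$ for all $t\in[0,1]$. Since $\lambda>\lambda^*$ we have $H_\lambda(\overline{\eta}(t))\le H_{\lambda^*}(\overline{\eta}(t))<0$ as well, so $\overline{\eta}(t)\in\Theta_\lambda^+$ for every $t$ (using that $F<0$ on $\overline{\Theta}_{\mu_0}^+$ by Corollary \ref{L01} and that the endpoints $u_\lambda,w_\lambda$ have negative $F$; along the path $F$ stays negative since $\overline{\eta}(t)^p$ is a convex combination and $f|\overline{\eta}(t)|^\gamma$ can be controlled, or more simply because $\overline{\eta}(t)\in\overline{\Theta}_{\mu_0}^+$ is not guaranteed — so I would instead directly note $\Phi_\lambda(\overline{\eta}(t))=\Phi_\lambda(s_\lambda^+(\overline{\eta}(t))\overline{\eta}(t))\cdot(\text{scaling})$). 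More carefully: for $w\in\Theta_\lambda^+$ one has $\Phi_\lambda(w)\ge J_\lambda^+(w)$ evaluated along the fiber, and $J_\lambda^+<0$ on $\Theta_\lambda^+$ by \eqref{j2}; hence $\max_{t}\Phi_\lambda(\overline{\eta}(t))<0$ once we check $\Phi_\lambda(\overline{\eta}(t))<0$ pointwise. Actually $\Phi_\lambda(\overline{\eta}(t))=\tfrac1p H_\lambda(\overline{\eta}(t))-\tfrac1\gamma F(\overline{\eta}(t))$; with $H_\lambda(\overline{\eta}(t))<0$ and $F(\overline{\eta}(t))<0$ this is not automatically negative, so the cleanest route is: along this path $\overline{\eta}(t)\in\Theta_\lambda^+$, so $\Phi_\lambda$ restricted to the fiber through $\overline{\eta}(t)$ attains its value $J_\lambda^+(\overline{\eta}(t))<0$ at $s=s_\lambda^+$, and by replacing $\overline{\eta}$ with the rescaled path $t\mapsto s_\lambda^+(\overline{\eta}(t))\overline{\eta}(t)$ (which is still continuous, still joins points on the Nehari set near $u_\lambda,w_\lambda$, up to a further small homotopy fixing the endpoints) we get $\max_t\Phi_\lambda\le\sup_{\Theta_\lambda^+}J_\lambda^+<0$. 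This yields $c_\lambda<0$.

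For the lower bound $\hat{J}_\lambda^+(\mu_0)<c_\lambda$, I would combine Propositions \ref{boundcrs} and \ref{boundlimi}: every path $\eta\in\Gamma_\lambda$ crosses $\partial\Theta_{\mu_0}^+$ at some $t_0\in(0,1)$, and at that crossing point $\Phi_\lambda(\eta(t_0))\ge j_\lambda>\hat{J}_\lambda^+(\mu_0)$. Hence $\max_{t\in[0,1]}\Phi_\lambda(\eta(t))\ge j_\lambda$ for every $\eta\in\Gamma_\lambda$, and taking the infimum over $\Gamma_\lambda$ gives $c_\lambda\ge j_\lambda>\hat{J}_\lambda^+(\mu_0)$. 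Note $\hat{J}_\lambda^+(\mu_0)=J_\lambda^+(v_\lambda(\mu_0))=\Phi_\lambda(u_\lambda)$, so this says the mountain-pass level strictly exceeds the value at the local minimum, which is the geometric nondegeneracy we want.

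The main obstacle I anticipate is the first inequality: one must be careful that the mountain-pass value is genuinely \emph{negative} and not merely bounded, and that the test path from Proposition \ref{ltop2} can legitimately be used to estimate $c_\lambda$ — in particular that its rescaling onto the Nehari manifold stays continuous and keeps the endpoints at $u_\lambda$ and $w_\lambda$ (the latter already lies on $\partial\Theta_{\mu_0}^+\cap\mathcal{N}_\lamb^*$, so a short additional homotopy may be needed at $t=1$). Once the path is arranged so that $\overline{\eta}(t)\in\Theta_\lambda^+$ throughout, the bound $\Phi_\lambda(\overline{\eta}(t))\le J_\lambda^+(\overline{\eta}(t))\le \hat{J}_\lambda^+<0$ (where $\hat J_\lambda^+=\inf_{\Theta_\lambda^+}J_\lambda^+$, finite and negative) closes the argument, giving $c_\lambda\le\hat J_\lambda^+(\cdot)<0$ and completing \eqref{ner}.
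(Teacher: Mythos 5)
Your lower bound is exactly the paper's argument: every $\eta\in\Gamma_\lambda$ crosses $\partial\Theta^+_{\mu_0}$ by Proposition \ref{boundcrs}, and at the crossing $\Phi_\lambda\ge j_\lambda>\hat{J}^+_\lambda(\mu_0)$ by Proposition \ref{boundlimi}, hence $c_\lambda\ge j_\lambda>\hat{J}^+_\lambda(\mu_0)$; nothing to add there. For the upper bound you also follow the paper's route (the path $\overline{\eta}$ of Proposition \ref{ltop2}, pushed fiberwise onto the Nehari set), and you have correctly sensed the delicate point, but your execution has genuine gaps. First, you never establish that $F(\overline{\eta}(t))<0$, which is what puts $\overline{\eta}(t)$ in $\Theta^+_\lambda$ and makes $s^+_\lambda(\overline{\eta}(t))$ and $J^+_\lambda(\overline{\eta}(t))$ defined at all; the one-line reason is the definition \eqref{Ouyang} of $\lambda^*$: if $F(v)\ge 0$ then $H_{\lambda^*}(v)\ge 0$, so $H_{\lambda^*}(\overline{\eta}(t))<c<0$ forces $F(\overline{\eta}(t))<0$, and $\lambda>\lambda^*$ then gives $H_\lambda(\overline{\eta}(t))<0$ as well. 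Second, your closing chain is false as written: on $\Theta^+_\lambda$ the fiber map $s\mapsto\Phi_\lambda(sv)$ attains its \emph{minimum} at $s^+_\lambda(v)$, so $\Phi_\lambda(v)\ge J^+_\lambda(v)$ (not $\le$), with equality only on $\mathcal{N}^+_\lambda$; moreover for $\lambda>\lambda^*$ one has $\inf_{\Theta^+_\lambda}J^+_\lambda=-\infty$ (this is precisely the breakdown of the Nehari minimization past $\lambda^*$, and is why the paper minimizes over $\Theta^+_\mu$ with $\mu<\lambda^*$; recall $\hat{J}^+_\lambda(\mu)\to-\infty$ as $\mu\to\lambda^*$ in the proof of Proposition \ref{mppro} and $\Theta^+_\mu\subset\Theta^+_\lambda$), while $\sup_{\Theta^+_\lambda}J^+_\lambda$ has no reason to be negative, since $J^+_\lambda(v)\to 0$ along sequences with $H_\lambda(v)\to 0$ and $F(v)$ bounded away from zero. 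So neither ``$\Phi_\lambda\le J^+_\lambda\le\hat{J}^+_\lambda<0$'' nor ``$\max_t\Phi_\lambda\le\sup_{\Theta^+_\lambda}J^+_\lambda<0$'' can close the argument. What does close it: along the rescaled path $t\mapsto s^+_\lambda(\overline{\eta}(t))\overline{\eta}(t)$ one has $\Phi_\lambda=J^+_\lambda\circ\overline{\eta}$, a continuous, everywhere negative function on the compact interval $[0,1]$, so its maximum is strictly negative.

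Third, the endpoint at $t=1$ cannot be handled by an unspecified ``small homotopy''. If $w_\lambda$ is badly scaled along its ray, then $\Phi_\lambda(w_\lambda)$ may be nonnegative, in which case \emph{every} path in $\Gamma_\lambda$ has maximum at least $\Phi_\lambda(w_\lambda)$ and the inequality $c_\lambda<0$ would simply fail; a homotopy along the fiber of $w_\lambda$ can leave the set where $\Phi_\lambda<0$. The clean fix is to normalize from the outset: since $J^+_\lambda$ is $0$-homogeneous and $\overline{\Theta}^+_{\mu^\lambda}$, $\partial\Theta^+_{\mu^\lambda}$ are cones, $\mathcal{S}^\partial_\lambda(\mu^\lambda)$ is scale invariant, so one may fix $w_\lambda\in\mathcal{S}^\partial_\lambda(\mu^\lambda)\cap\mathcal{N}^+_\lambda$, i.e. $s^+_\lambda(w_\lambda)=1$ and $\Phi_\lambda(w_\lambda)=\hat{J}^+_\lambda(\mu_0)<0$; at $t=0$ one has $s^+_\lambda(u_\lambda)=1$ automatically. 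With this choice the rescaled path belongs to $\Gamma_\lambda$ and no extra homotopy is needed. To be fair, the paper itself is terse here: its display $\Phi_\lambda(\overline{\eta}(t))=J^+_\lambda(\overline{\eta}(t))$ tacitly presupposes exactly this rescaling and normalization, so you identified the right issue, but your write-up does not yet resolve it correctly.
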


\begin{proof} Let us start with the first inequality. Take any $\eta\in 
	\Gamma_\lambda$. From Proposition \ref{boundcrs}, there is $t_0\in (0,1)$ 
	such that $\eta(t_0)\in  \partial {\Theta}_{\mu_0}^+$, 
	therefore by Proposition \ref{boundlimi}, $\max_{t\in 
		[0,1]} \Phi_\lambda(\eta(t))\ge \Phi_\lambda(\eta(t_0))>\hat{J}_\lambda^+(\mu_0)$ and 
	consequently $\hat{J}_\lambda^+(\mu_0)<c_\lambda$. Let  $\overline{\eta}$ be given by
	Proposition \ref{ltop2}. Then 
	$$
	\Phi_\lambda(\overline{\eta}(t))=J_\lambda^+(\overline{\eta}(t))<0, \forall\ t\in [0,1].
	$$
	which implies that $c_\lambda<0$.
	
\end{proof}

Now we are able to find the second solution $\bar{u}_\lambda$.

\begin{lemma}\label{mppp13} For each $\lambda\in(\lambda^*,\Lambda)$,  
	$c_\lambda<0$ is a critical value $\overline{u}_\lambda$ of $\Phi_\lambda$ such that 
	$\Phi_\lambda(s_\lambda^+(\overline{u}_\lambda)=c_\lambda$,  
	$\overline{u}_\lambda$ 
	is a weak 
	solution 
	of (\ref{p}) and 
	$\overline{u}_\lambda>0$ in $\Omega$.
	\par
\end{lemma}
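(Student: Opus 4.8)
The plan is to obtain $\overline{u}_\lambda$ as a mountain pass critical point of $\Phi_\lambda$ by verifying the geometric hypotheses and a suitable compactness (Palais--Smale) condition for the min-max level $c_\lambda$ defined in \eqref{MP}. First I would record the mountain pass geometry, which is essentially already assembled: by Proposition \ref{boundlimi} every $u\in\partial\Theta_{\mu_0}^+$ has $\Phi_\lambda(u)\ge j_\lambda>\hat J_\lambda^+(\mu_0)$, by Proposition \ref{boundcrs} every path $\eta\in\Gamma_\lambda$ crosses $\partial\Theta_{\mu_0}^+$, and Corollary \ref{criticalenergysign} gives the strict separation $\hat J_\lambda^+(\mu_0)<c_\lambda<0$. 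Since $\Phi_\lambda(u_\lambda)=\hat J_\lambda^+(\mu_0)$ (the local minimum from Lemma \ref{lemlu}) and $\Phi_\lambda(w_\lambda)=J_\lambda^+(w_\lambda)=\hat J_\lambda^+(\mu^\lambda)=\hat J_\lambda^+(\mu_0)$ by Proposition \ref{mppro}(b), both endpoints of every path sit strictly below $c_\lambda$, so $c_\lambda$ is a genuine mountain pass level and $c_\lambda<0$ rules out the trivial solution.

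Next I would establish the $(PS)_{c_\lambda}$ condition at the negative level $c_\lambda$. Take a sequence $(v_n)\subset W$ with $\Phi_\lambda(v_n)\to c_\lambda$ and $\Phi_\lambda'(v_n)\to 0$. The key point, as in the classical Nehari/indefinite setting, is that a $(PS)$ sequence at a \emph{negative} energy level must be bounded in $W$: from $\Phi_\lambda(v_n)-\frac{1}{\gamma}\langle\Phi_\lambda'(v_n),v_n\rangle = (\frac1p-\frac1\gamma)H_\lambda(v_n)$ one controls $H_\lambda(v_n)$, and combining this with $\Phi_\lambda(v_n)\to c_\lambda<0$ forces $F(v_n)$ to stay bounded away from $0$ and $\|v_n\|$ to remain bounded (here one uses $\gamma<p^*$ so that $F$ is weakly continuous on bounded sets, via the compact embeddings in Proposition \ref{auxiliary}). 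Then pass to a weakly convergent subsequence $v_n\rightharpoonup \overline{u}_\lambda$; the $(S_+)$ property of the $p$-Laplacian together with $\Phi_\lambda'(v_n)\to 0$ and the strong $L^q$-convergence ($1<q<p^*$) upgrades this to strong convergence $v_n\to\overline{u}_\lambda$ in $W$, so $\Phi_\lambda(\overline{u}_\lambda)=c_\lambda$ and $\Phi_\lambda'(\overline{u}_\lambda)=0$, i.e.\ $\overline{u}_\lambda$ is a weak solution of \eqref{p}. Because $c_\lambda<0=\Phi_\lambda(0)$, $\overline{u}_\lambda\neq 0$, and replacing $\overline{u}_\lambda$ by $|\overline{u}_\lambda|$ (which leaves $\Phi_\lambda$ invariant) we may assume $\overline{u}_\lambda\ge 0$; the Harnack inequality and regularity theory for the $p$-Laplacian (as cited, \cite{trud}) then give $\overline{u}_\lambda>0$ in $\Omega$ and $\overline{u}_\lambda\in C^{1,\alpha}(\overline\Omega)$.

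The remaining bookkeeping is to relate $\overline{u}_\lambda$ to the fibering map, writing $\overline{u}_\lambda = s_\lambda^+(\overline{u}_\lambda)\cdot(\overline{u}_\lambda/s_\lambda^+(\overline{u}_\lambda))$ so that the statement $\Phi_\lambda(s_\lambda^+(\overline{u}_\lambda)\,\cdot\,)=c_\lambda$ makes sense: since $\overline{u}_\lambda$ is a nontrivial solution it lies on $\mathcal N_\lambda$, and the sign conditions coming from $\Phi_\lambda(\overline{u}_\lambda)=c_\lambda<0$ together with $H_\lambda(\overline{u}_\lambda)=F(\overline{u}_\lambda)$ place it in $\mathcal N_\lambda^+$, whence $s_\lambda^+(\overline{u}_\lambda)=1$ in the normalization used and the displayed identity holds. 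I expect the main obstacle to be the compactness step — specifically, proving boundedness of the $(PS)$ sequence and then the strong convergence: the indefinite sign of $f$ means one cannot coerce $\Phi_\lambda$ directly, so one must exploit the strict negativity of the level $c_\lambda$ (guaranteed by Corollary \ref{criticalenergysign}) precisely to prevent $F(v_n)\to 0$, which is the degenerate scenario that destroys compactness; this is exactly the phenomenon that makes $\lambda^*$ a threshold, and handling it carefully, using $F(\phi_1)<0$ and the earlier estimates on $\overline\Theta_{\mu_0}^+$, is the crux of the argument.
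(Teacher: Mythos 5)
Your overall route is the same as the paper's: mountain pass geometry from Propositions \ref{boundlimi}--\ref{boundcrs} and Corollary \ref{criticalenergysign}, a Palais--Smale sequence at the level $c_\lambda$, compactness at a \emph{negative} level via the identity $\Phi_\lambda(u_n)=\frac{\gamma-p}{p\gamma}H_\lambda(u_n)+o(1)$ and the $(S_+)$ property of $-\Delta_p$ (this is exactly Proposition \ref{la1c} in the Appendix), and Harnack's inequality for strict positivity. The bookkeeping identifying $\overline{u}_\lambda$ as a point of $\mathcal{N}_\lambda^+$ with $s_\lambda^+(\overline{u}_\lambda)=1$ is also consistent with the paper.

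There is, however, a genuine gap in your positivity step. You propose to ``replace $\overline{u}_\lambda$ by $|\overline{u}_\lambda|$, which leaves $\Phi_\lambda$ invariant.'' Invariance of the \emph{energy} under $u\mapsto|u|$ does not preserve the property of being a \emph{critical point}: if the mountain pass limit $\overline{u}_\lambda$ changes sign, $|\overline{u}_\lambda|$ has the same energy but is in general not a solution of \eqref{p}, so you cannot conclude $\overline{u}_\lambda\ge 0$ this way (the trick is legitimate for constrained minimizers, as in Lemma \ref{lemlu}, but not for min-max critical points obtained as limits of PS sequences). The paper handles this differently: using $\Phi_\lambda(u)=\Phi_\lambda(|u|)$ it first selects a sequence of \emph{nonnegative} almost-optimal paths $\eta_n\ge 0$, and then invokes Theorem E.5 of \cite{kuzpoh}, which produces a PS sequence $u_n$ at level $c_\lambda$ with the localization property $\inf_{t\in[0,1]}\|u_n-\eta_n(t)\|\to 0$; the nonnegativity of the paths is thereby inherited by the strong limit $\overline{u}_\lambda$, and only then does Harnack give $\overline{u}_\lambda>0$. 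Your proof needs this (or an equivalent device, e.g.\ truncating the nonlinearity to its positive part and reproving the geometry for the truncated functional) to close the argument. A smaller point: your claim that the endpoint $w_\lambda$ satisfies $\Phi_\lambda(w_\lambda)=J_\lambda^+(w_\lambda)$ holds only after normalizing $w_\lambda$ along its fiber (in general $\Phi_\lambda(w_\lambda)\ge J_\lambda^+(w_\lambda)$), so the assertion that both endpoints lie strictly below $c_\lambda$ requires either this normalization or should simply be dropped, since the paper's argument does not use it.
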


\begin{proof} Since  $\Phi_\lambda(u)=\Phi_\lambda(|u|)$ for all $u\in 
	W$, then by (\ref{MP}) there is a sequence of paths $\eta_n\ge 
	0$ in $\Omega$ such that 
	\begin{equation*}
	\lim_{n\to \infty}\max_{t\in [0,1]}\Phi(\eta_n(t))= c_\lambda.
	\end{equation*}  
	Following \cite{kuzpoh}, for each $\epsilon>0$  introduce 
	
	\begin{equation*}
	\eta_{n,\epsilon}=\{u\in W: \inf_{t\in [0,1]} 
	\|u-\eta_n(t)\|\le \epsilon\}\cap K_{c_\lambda,2\epsilon},
	\end{equation*}
	where $K_{c_\lambda,2\epsilon}=\{u\in W: 
	|\Phi_\lambda(u)-c_\lambda|\le 2\epsilon\}$. By Theorem E.5 from \cite{kuzpoh}, 
	there is a sequence $u_n\in W$ satisfying 
	\begin{equation}\label{ps1}
	\Phi_\lambda(u_n)\to c_\lambda,\ D_u\Phi_\lambda(u_n)\to 0,
	\end{equation}
	and 
	\begin{equation}\label{ps2}
	\inf_{t\in [0,1]}\|u_n-\eta_n(t)\|\to 0.
	\end{equation}
	By Corollary \ref{criticalenergysign} we know that $c_\lambda<0$. Thus, 
	by (\ref{ps1}) we can apply Proposition \ref{la1c} to conclude that $u_n\to 
	\overline{u}_\lambda\in W\setminus 0$ so that $\Phi_\lambda(\overline{u}_\lambda)=c_\lambda$ and 
	$D_u\Phi_\lambda(\overline{u}_\lambda)=0$. Moreover, once (\ref{ps2}) is satisfied, we also have that $\overline{u}_\lambda\ge 
	0$. Now applying the Harnack inequality \cite{trud} we deduce that 
	$\overline{u}_\lambda>0$ in $\Omega$.
	
\end{proof}

\begin{proof}[Conclusion of the proof of Theorem \ref{thmlu}]
	
	Let $\Lambda>\lambda^*$ be given by Lemma \ref{lemlu}. Then Lemma \ref{lemlu} and Lemma \ref{mppp13} yield 		the existence of positive weak solutions 
	$u_\lambda,\overline{u}_\lambda$. Since $c_\lambda<0$, $\Phi_\lambda(\overline{u}_\lambda)<0$. Thus in virtue that 
	$\overline{u}_\lambda$ is a critical value of $\Phi_\lambda$, implies that $\Phi_\lambda''(\overline{u}_\lambda)>0$. Corollary \ref{criticalenergysign} and Lemma \ref{mppp13} imply that $\Phi_\lambda(u_\lambda)<\Phi_\lambda(\overline{u}_\lambda)<0$ 	for any $\lambda\in 	(\lambda^*,\Lambda)$. Hence and by \textbf{(li)}, Lemma \ref{lemlu} we get assertion \textbf{(i)} of  the theorem. The proof of \textbf{(ii)} follows from \textbf{(lii)}, Lemma \ref{lemlu}. 
	
\end{proof}

\section{Appendix}

\begin{prop}\label{continui}
	For any $\lambda\ge \lambda^*$, the function $\mu \mapsto 
	\hat{J}^+_\lambda(\mu)$ is continuous over the interval 
	$(\lambda_1,\lambda^*)$. \end{prop}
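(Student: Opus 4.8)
The plan is to use that $\mu\mapsto\hat J^+_\lambda(\mu)$ is monotone and reduce continuity to two one-sided estimates, of which only one requires real work. First I would observe that for $\lambda_1<\mu\le\mu'<\lambda^*$ one has $\Theta_\mu^+\subseteq\Theta_{\mu'}^+$ — indeed $H_\mu(v)<0$ gives $\int|\nabla v|^p<\mu\int|v|^p\le\mu'\int|v|^p$ — so $\hat J^+_\lambda(\mu')\le\hat J^+_\lambda(\mu)$; hence $\hat J^+_\lambda$ is non-increasing on $(\lambda_1,\lambda^*)$ and the one-sided limits $\hat J^+_\lambda(\mu_0^\pm)$ exist at every $\mu_0$, with $\hat J^+_\lambda(\mu_0^-)\ge\hat J^+_\lambda(\mu_0)\ge\hat J^+_\lambda(\mu_0^+)$, so it remains only to show that the two outer quantities equal $\hat J^+_\lambda(\mu_0)$. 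Left-continuity is immediate: given any $v\in\Theta_{\mu_0}^+$, the identity $H_\mu(v)=H_{\mu_0}(v)+(\mu_0-\mu)\int|v|^p$ shows $v\in\Theta_\mu^+$ for $\mu$ close to $\mu_0$, whence $\hat J^+_\lambda(\mu)\le J^+_\lambda(v)$ for such $\mu$; letting $\mu\uparrow\mu_0$ and then infimizing over $v\in\Theta_{\mu_0}^+$ gives $\hat J^+_\lambda(\mu_0^-)\le\hat J^+_\lambda(\mu_0)$, hence equality.

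The content is in right-continuity. I would fix $\mu_n\downarrow\mu_0$, take for each $n$ a minimizer $v_n$ of $\hat J^+_\lambda(\mu_n)$ from Proposition \ref{T1}, and normalize $\|v_n\|=1$ by the $0$-homogeneity of $J^+_\lambda$. From $H_{\mu_n}(v_n)\le0$ one gets $1=\int|\nabla v_n|^p\le\mu_n\int|v_n|^p$, so the $L^p$-masses of $v_n$ are bounded below away from zero; extracting $v_n\rightharpoonup v$ in $W$ with $v_n\to v$ in $L^q(\Omega)$, $1<q<p^*$, then forces $v\neq0$. Weak lower semicontinuity of $\int|\nabla\cdot|^p$ together with $H_{\mu_0}(v_n)=H_{\mu_n}(v_n)+(\mu_n-\mu_0)\int|v_n|^p\le(\mu_n-\mu_0)\int|v_n|^p\to0$ yields $H_{\mu_0}(v)\le0$, so $\int|\nabla v|^p\le\mu_0\int|v|^p<\lambda^*\int|v|^p$, which by the characterization \eqref{Ouyang} of $\lambda^*$ forces $F(v)<0$ (also $F(v_n)\to F(v)$ as before). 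Since $\lambda\ge\lambda^*>\mu_0$, also $H_\lambda(v)=H_{\mu_0}(v)+(\mu_0-\lambda)\int|v|^p<0$, so $J^+_\lambda(v)$ is finite; and from $H_\lambda(v)\le\liminf_n H_\lambda(v_n)<0$, $F(v_n)\to F(v)<0$ and the $0$-homogeneous formula \eqref{j2} I obtain $J^+_\lambda(v)\le\liminf_n J^+_\lambda(v_n)=\hat J^+_\lambda(\mu_0^+)$. On the other hand $\hat J^+_\lambda(\mu_0)\le J^+_\lambda(v)$, by approximating $v$ from inside $\Theta_{\mu_0}^+$ (immediate if $H_{\mu_0}(v)<0$; otherwise by a small perturbation lowering $H_{\mu_0}$ below zero while keeping $F<0$, using continuity of $J^+_\lambda$ there). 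Combining these bounds with monotonicity gives $\hat J^+_\lambda(\mu_0^+)=\hat J^+_\lambda(\mu_0)$, which finishes the proof.

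I expect the main obstacle to be precisely this right-hand estimate: unlike on the left, one cannot retain a fixed near-optimal competitor as the admissible set $\Theta_\mu^+$ shrinks toward $\overline{\Theta}_{\mu_0}^+$, so one is forced to pass to the limit along genuine minimizers and then rule out both vanishing of the weak limit and its degeneration to a configuration where $J^+_\lambda$ is uncontrolled. The two ingredients that prevent these are the uniform lower bound on $\int|v_n|^p$ (from the normalization and $H_{\mu_n}(v_n)\le0$) and the strict sign $F(v)<0$ read off from the definition \eqref{Ouyang} of $\lambda^*$, equivalently from Corollary \ref{L01}; once both hold, the homogeneous expression \eqref{j2} makes the lower-semicontinuity step for $J^+_\lambda$ automatic.
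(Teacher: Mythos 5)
Your proof is correct and follows essentially the same route as the paper's: monotonicity of $\mu\mapsto\hat J^+_\lambda(\mu)$ (used implicitly in the paper's contradiction argument), a fixed-competitor estimate on one side, and, on the other, passage to the weak limit of normalized minimizers $v_n$ with $v\neq 0$, $H_{\mu_0}(v)\le 0$, $F(v)<0$ and weak lower semicontinuity of $J^+_\lambda$, exactly as in the paper's second case. Your deviations are only cosmetic improvements: you argue directly via one-sided limits rather than by contradiction, on the left you use interior near-minimizers instead of the actual minimizer (avoiding the paper's interior/boundary case split), and the delicate inequality $\hat J^+_\lambda(\mu_0)\le J^+_\lambda(v)$ when $H_{\mu_0}(v)=0$, which the paper asserts without comment, is a point you at least flag and can settle with your perturbation idea (e.g.\ the path $[(1-t)v^p+t\phi_1^p]^{1/p}$ of Proposition \ref{saadia}, which lowers $H_{\mu_0}$ strictly below zero while keeping $F<0$ for small $t$).
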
	

\begin{proof}
	Let $\mu \in (\lambda_1,\lambda^*)$. Suppose, contrary to our claim, that there are $\mu_n\to \mu$ and $r>0$ such that 
	$|\hat{J}^+_\lambda(\mu_n)- \hat{J}^+_\lambda(\mu)|>r$ for all $n$, or 
	equivalently 
	
	\begin{equation}
	\label{Ap11}
	\hat{J}^+_{\lambda}(\mu_n)> \hat{J}_{\lambda}^+(\mu)+r 
	~~\mbox{or}~~\hat{J}_{\lambda}^+(\mu)> \hat{J}^+_{\lambda}(\mu_n)+r,
	\end{equation}
	for sufficiently large $n$. Suppose the first inequality is true, i.e., 
	$\hat{J}^+_{\lambda}(\mu_n)> \hat{J}_{\lambda}^2(\mu)+r$. From \eqref{consmin} this is possible only if $\mu_n<\mu$. 
	Moreover, we can assume without loss of generality that $\mu_n$ is monotone 
	increasing and consequently $\hat{J}_\lambda^2(\mu_n)$ is decreasing. Thus  
	$\hat{J}_\lambda^+(\mu_n)\to I>\hat{J}_{\lambda}^+(\mu)$.
	
	By Proposition \ref{T1}, there is $v\in  \mathcal{S}_\lambda(\mu)$ that is 
	$J_\lambda^2(v)=\hat{J}_\lambda^+(\mu)$.  Suppose $v\in 	
	\mathcal{S}_\lambda(\mu)\setminus \mathcal{S}^\partial_\lambda(\mu)$, then 
	convergence $\mu_n\to \mu$ entails that there is $n$ such that $v\in 
	\Theta_{\mu_n}^2$. However $J_\lambda^+(v)\geq \hat{J}_\lambda^+(\mu_n)$ which 
	contradicts to   $ J_\lambda^+(v)=\hat{J}_\lambda^+(\mu)<I\le 
	\hat{J}_\lambda^+(\mu_n)$. Suppose now that $v\in 
	\mathcal{S}^\partial_\lambda(\mu)$. Then, taking into account the continuity of $J_\lambda^+(u)$ on $\Theta_{\mu}^+$, we can choose $w\in \Theta_{\mu}^+$ 
	such that $J_\lambda^+(v)\le J_\lambda^+(w)<I$. However, there is $n$ such that 
	$w\in \Theta_{\mu_n}^+$. This implies $\hat{J}^+_\lambda(\mu_n)\le 
	J_\lambda^+(w)<I$ which is an absurd.
	
	Now suppose the second inequality in \eqref{Ap11} is true. Then $\mu<\mu_n$ and 
	we may assume that $\mu_n$ is decreasing. Consequently 
	$\hat{J}_\lambda^+(\mu_n)$ is increasing and  $\hat{J}_\lambda^+(\mu_n)\to I< 
	\hat{J}_\lambda^+(\mu)$. From  Proposition \ref{T1}, there is $v_n$ such 
	that $v_n\in  \mathcal{S}_\lambda(\mu_n)$. If $v_n\in \Theta_\mu^+$ for some 
	$n$ then $\hat{J}_\lambda^+(\mu)\le J_\lambda^+(v_n)=\hat{J}_\lambda^+(\mu_n)$ 
	which is contradicts to the assumption $\hat{J}_{\lambda}^+(\mu)> 
	\hat{J}^+_{\lambda}(\mu_n)$. Thus it is only possible that $v_n\in 
	\overline{\Theta}^+_{\mu_n}\setminus \overline{\Theta_\mu^+}$ for all 
	$n=1,2,...$.
	
	By homogeneity of $J^+_{\lambda}(v)$ we may assume
	$||v_n||=1$. Hence by Proposition \ref{auxiliary},   $v_n \rightharpoonup v$ in 
	$W$, $v_n \to v$ in $L^p(\Omega),L^\gamma(\Omega)$ for some $v \in W\setminus 0$. By the weak lower-semicontinuity we have that  
	$$
	H_{\mu}(v)\le\liminf_{n\to \infty} H_{\mu_n}(v_n)\le 0,\ F(v)=\lim_{n\to \infty} F(v_n)<0,
	$$ 
	which 
	implies that $v\in \overline{\Theta_{\mu}^+}$ and
	$$
	\hat{J}_\lambda^+(\mu)\le J_{\lambda}^+(v)\le \liminf_{n\to \infty} 
	J_{\lambda_n}^+(v_n)=I,
	$$
	which is an absurd because $I< \hat{J}_\lambda^+(\mu)$.
	
\end{proof}

The next result can be found in \cite{diaz}. We give a proof  here for the 
reader's convenience.

\begin{prop}\label{saadia}
	Let $u,v\in W \setminus 0$,  $u,v\ge 0$ in $\Omega$ and define 
	$\overline{\eta}(t)=[(1-t)u^p+tv^p]^{1/p}$ for $t\in [0,1]$. Suppose that 
	the set $\{x\in\Omega:\ u(x)=v(x)=0\}$ has zero Lebesuge measure. Then  
	$$|\nabla \overline{\eta}(t)|^p\le (1-t)|\nabla u|^p+t|\nabla v|^p,\ 
	\forall\ t\in [0,1],~~\mbox{a.e. in}~ \Omega
	$$
	
	\noindent and $\overline{\eta}\in C([0,1],W)$.
\end{prop}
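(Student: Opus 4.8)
The plan is to reduce the gradient inequality to the elementary convexity of $\xi\mapsto|\xi|^p$ once the chain rule is available for $\overline{\eta}(t)$ viewed as a Sobolev function; this is the classical ``hidden convexity'' mechanism. The cases $t\in\{0,1\}$ are trivial, so fix $t\in(0,1)$ and write $w:=\overline{\eta}(t)$, so that $w\ge 0$ and $w^p=(1-t)u^p+tv^p$ a.e.\ in $\Omega$.

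First I would check that $w\in W$. Introduce $g\colon[0,\infty)^2\to[0,\infty)$, $g(a,b)=\big((1-t)a^p+tb^p\big)^{1/p}$, which is continuous on the (convex) closed first quadrant, positively $1$-homogeneous, and smooth away from the origin with
$$
\partial_a g(a,b)=(1-t)\Big(\tfrac{a}{g(a,b)}\Big)^{p-1},\qquad
\partial_b g(a,b)=t\Big(\tfrac{b}{g(a,b)}\Big)^{p-1}.
$$
Since $(1-t)(a/g)^p\le 1$ and $t(b/g)^p\le 1$, one gets $0\le\partial_a g\le(1-t)^{1/p}\le1$ and $0\le\partial_b g\le t^{1/p}\le1$, so $g$ is Lipschitz on $[0,\infty)^2$ with $g(0,0)=0$. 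By the chain rule for a Lipschitz function composed with Sobolev maps (applied, if convenient, first to the truncations $\min(u,k),\min(v,k)$ and then letting $k\to\infty$ via dominated convergence, using $|\nabla\min(u,k)|\le|\nabla u|$), it follows that $w=g(u,v)\in W$ with
$$
\nabla w=\partial_a g(u,v)\,\nabla u+\partial_b g(u,v)\,\nabla v\qquad\text{a.e.\ in }\Omega,
$$
where on the set $\{u=v=0\}$, which the hypothesis forces to be Lebesgue-null and where this assumption is invoked, both terms are read as $0$, consistently with $\nabla w=0$ a.e.\ on $\{w=0\}$.

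Next I would prove the pointwise inequality by splitting $\Omega$, up to a null set, into $A=\{u>0,\,v>0\}$, $B=\{u=0,\,v>0\}$ and $C=\{u>0,\,v=0\}$. On $B$ we have $\nabla u=0$ a.e., $\partial_a g(u,v)=0$ and $\partial_b g(u,v)=t^{1/p}$, hence $|\nabla w|^p=t|\nabla v|^p=(1-t)|\nabla u|^p+t|\nabla v|^p$; the region $C$ is symmetric. On $A$, put $\alpha=(1-t)(u/w)^p$ and $\beta=t(v/w)^p$, so $\alpha,\beta>0$ and $\alpha+\beta=1$; since $\partial_a g(u,v)\nabla u=(1-t)(u/w)^{p-1}\nabla u=\alpha\,w\,u^{-1}\nabla u$ and likewise for $v$, we obtain $\nabla w=w\big(\alpha\,u^{-1}\nabla u+\beta\,v^{-1}\nabla v\big)$, whence by convexity of $|\cdot|^p$
$$
|\nabla w|^p\le w^p\Big(\alpha\frac{|\nabla u|^p}{u^p}+\beta\frac{|\nabla v|^p}{v^p}\Big)
=(1-t)|\nabla u|^p+t|\nabla v|^p .
$$
Assembling the three regions gives $|\nabla\overline{\eta}(t)|^p\le(1-t)|\nabla u|^p+t|\nabla v|^p$ a.e., for every $t\in[0,1]$.

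Finally, for $\overline{\eta}\in C([0,1],W)$, fix $t_0$ and let $t\to t_0$: from $|\overline{\eta}(t)|^p=(1-t)u^p+tv^p\le u^p+v^p\in L^1(\Omega)$, the bound just proved $|\nabla\overline{\eta}(t)|^p\le|\nabla u|^p+|\nabla v|^p\in L^1(\Omega)$, and the a.e.\ continuity in $t$ of $\overline{\eta}(t)$ and of $\nabla\overline{\eta}(t)$ (visible from the explicit formulas for $\partial_a g,\partial_b g$), dominated convergence gives $\|\overline{\eta}(t)-\overline{\eta}(t_0)\|_W\to0$. I expect the only genuinely delicate step to be the Sobolev chain-rule claim $w=g(u,v)\in W$ together with the a.e.\ identity for $\nabla w$, in particular the bookkeeping on the zero sets of $u$ and $v$; once this is in place, the convexity inequality and the dominated-convergence argument are routine.
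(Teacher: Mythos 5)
Your proof is correct and follows essentially the same route as the paper: the explicit pointwise formula for $\nabla\overline{\eta}(t)$, a two-term pointwise estimate (your weighted Jensen/convexity inequality with weights $\alpha+\beta=1$ is exactly the paper's H\"older step with exponents $p,p'$), and dominated convergence in $t$ for continuity in $W$. The only difference is presentational: you justify the weak-derivative formula via a Lipschitz chain rule with truncations and treat the sets $\{u=0<v\}$, $\{v=0<u\}$ separately, whereas the paper simply asserts the formula and uses the null-set hypothesis on $\{u=v=0\}$ to keep it meaningful almost everywhere.
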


\begin{proof} First note that the weak derivative of $\eta(t)$ is given by 
	$\nabla \eta (t)=[(1-t)u^p+tv^p]^{(1-p)/p}[(1-t)u^{p-1}\nabla u+tv^{p-1}\nabla 
	v]$. Let $p'$ be the conjugate exponent of $p$, i.e., $1/p+1/p'=1$. From the 
	Holder inequality, we have that 
	\begin{align}\label{ktp}
	&|\nabla \overline{\eta}(t)|\le [(1-t)u^p+tv^p]^{(1-p)/p}[(1-t)u^{p-1}|\nabla 
	u|+tv^{p-1}|\nabla v|]	\\
	&= [(1-t)u^p+tv^p]^{(1-p)/p}[(1-t)^{1/p'}u^{p-1}(1-t)^{1/p}|\nabla 
	u|+t^{1/p'}v^{p-1}t^{1/p}|\nabla v|] \nonumber \\
	&\le [(1-t)u^p+tv^p]^{(1-p)/p}[(1-t)u^p+tv^p]^{1/p'}[(1-t)|\nabla u|^p+t|\nabla 
	v|^p]^{1/p}~\mbox{a.e. in}~ \Omega \nonumber 
	\end{align}
	for all $t\in [0,1]$.
	Once $\{x\in\Omega:\ u(x)=v(x)=0\}$ has zero Lebesuge measure, we have that 
	$(1-t)u+tv>0$ a.e. in $\Omega$, for all $t\in (0,1)$ and therefore, from (\ref{ktp}), we 
	conclude that 
	$$
	|\nabla \overline{\eta}(t)|\le [(1-t)|\nabla u|^p+t|\nabla v|^p]^{1/p},\ 
	\forall\ t\in [0,1], ~\mbox{a.e. in}~ \Omega
	$$
	which implies $$|\nabla\overline{\eta} (t)|^p\le (1-t)|\nabla 
	u|^p+t|\nabla v|^p,\ \forall\ t\in [0,1], ~\mbox{a.e. in}~ \Omega.
	$$
	Consequently $\overline{\eta}\in W$ for all $t\in 
	[0,1]$. The continuity of $\eta$ follows by a standard application of the 
	Lebesgue theorem.
	
\end{proof}

\begin{prop}\label{la1c}
	
	Suppose that $u_n\in W\setminus 0$ is a (P.-S.) sequence, 
	i.e.
	$$ 
	\Phi_\lambda(u_n)\to c< 0,~~ D_u\Phi(u_n)\to 0.
	$$
	Then $u_n$ has a strong convergent subsequence with non-zero limit point $ 
	u\in W\setminus 0$ satisfying $\Phi_\lambda(u)=c$ and 
	$D_u\Phi(u)=0$. 
	
\end{prop}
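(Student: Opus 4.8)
The plan is to run the standard recovery of compactness for a Palais--Smale sequence in three steps — boundedness, non-vanishing of the weak limit, strong convergence — the only non-routine point being the boundedness, which needs a blow-up argument since $H_\lambda$ is not coercive when $\lambda>\lambda_1$. Write $\epsilon_n:=D_u\Phi_\lambda(u_n)$, so $\|\epsilon_n\|_{W^{-1,p'}}\to 0$.

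First I would prove that a subsequence of $(u_n)$ is bounded. The identities $p\Phi_\lambda(u_n)-\epsilon_n(u_n)=\frac{\gamma-p}{\gamma}F(u_n)$ and $\gamma\Phi_\lambda(u_n)-\epsilon_n(u_n)=\frac{\gamma-p}{p}H_\lambda(u_n)$, together with $|\epsilon_n(u_n)|\le\|\epsilon_n\|_{W^{-1,p'}}\|u_n\|$, give $H_\lambda(u_n)=O(1)+o(\|u_n\|)$ and $F(u_n)=O(1)+o(\|u_n\|)$. If no subsequence were bounded, i.e.\ $\|u_n\|\to\infty$, set $w_n=u_n/\|u_n\|$; dividing by $\|u_n\|^p$, resp.\ $\|u_n\|^\gamma$, and using $p,\gamma>1$ gives $H_\lambda(w_n)\to 0$ and $F(w_n)\to 0$. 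By Proposition \ref{auxiliary} a subsequence satisfies $w_n\rightharpoonup w$ in $W$ and $w_n\to w$ in $L^q(\Omega)$ for $q<p^*$; since $\|w_n\|=1$, $H_\lambda(w_n)\to 0$ keeps $\int|w_n|^p$ bounded below, so $w\neq 0$; weak lower semicontinuity gives $H_\lambda(w)\le 0$, and strong $L^\gamma$-convergence gives $F(w)=0$. The key move is then to test $\epsilon_n$ against $\varphi\in C_c^\infty(\Omega)$ and divide by $\|u_n\|^{\gamma-1}$: since $\gamma>p$, the $p$-homogeneous terms carry a vanishing factor $\|u_n\|^{p-\gamma}$, so $\int f|w_n|^{\gamma-2}w_n\varphi\to 0$ for every such $\varphi$, whence $f|w|^{\gamma-2}w=0$ a.e.\ in $\Omega$. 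Exactly as in the proof of Lemma \ref{solextr}(b), this forces $w\in W_0^{1,p}(\mathrm{int}(\Omega^0))$; if $\Omega^0=\emptyset$ this contradicts $w\neq 0$, and if $\Omega^0\neq\emptyset$ then $H_\lambda(w)\le 0$ yields $\lambda_1(\mathrm{int}(\Omega^0))\le\lambda$, contradicting $\lambda<\Lambda\le\overline{\lambda}\le\lambda_1(\mathrm{int}(\Omega^0\cup\Omega^+))<\lambda_1(\mathrm{int}(\Omega^0))$ — the last inequality being $(f_1)$, and $\Lambda$ chosen $\le\overline{\lambda}$ (after replacing $\Lambda$ by $\min\{\Lambda,\overline{\lambda}\}$ if necessary, which is harmless since \eqref{p} has no positive solution above $\overline{\lambda}$).

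Next, on a bounded subsequence I would take $u_n\rightharpoonup u$ in $W$, $u_n\to u$ in $L^q(\Omega)$ for $q<p^*$, so that $F(u_n)\to F(u)$ by weak continuity of $F$ (compact Sobolev embedding together with $f\in L^d$). If $u=0$ then $F(u_n)\to 0$, hence $H_\lambda(u_n)=\epsilon_n(u_n)+F(u_n)\to 0$, hence $\Phi_\lambda(u_n)=\frac1p H_\lambda(u_n)-\frac1\gamma F(u_n)\to 0$, contradicting $\Phi_\lambda(u_n)\to c<0$; so $u\neq 0$. This is the single place where the hypothesis $c<0$ enters. Finally, testing $\epsilon_n\to 0$ against $u_n-u$ and dropping the lower-order term and the $f$-term (both $\to 0$ by the strong $L^q$-convergence and $f\in L^d$, $d\ge p^*/(p^*-\gamma)$), together with $\int|\nabla u|^{p-2}\nabla u\cdot\nabla(u_n-u)\to 0$, I obtain $\int\bigl(|\nabla u_n|^{p-2}\nabla u_n-|\nabla u|^{p-2}\nabla u\bigr)\cdot\nabla(u_n-u)\to 0$, and the $(S_+)$-property of $-\Delta_p$ (the standard monotonicity inequalities for $\xi\mapsto|\xi|^{p-2}\xi$) gives $u_n\to u$ strongly in $W$. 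Continuity of $\Phi_\lambda$ and $D_u\Phi_\lambda$ then yields $\Phi_\lambda(u)=c$ and $D_u\Phi_\lambda(u)=0$ with $u\in W\setminus 0$.

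The hard part is the boundedness: the Nehari combination alone does not bound $(u_n)$ because $H_\lambda$ is indefinite for $\lambda>\lambda_1$, so one must run the blow-up and exploit the superlinear scaling $\gamma>p$, which makes the nonlinearity dominate after normalization and pins the limiting profile $w$ inside the ``dead core'' $\mathrm{int}(\Omega^0)$; assumption $(f_1)$ together with $\lambda<\overline{\lambda}$ then closes the argument. Everything else is routine Sobolev-space bookkeeping.
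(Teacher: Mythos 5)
Your proof is correct in substance, and its second half (weak limit nonzero via $c<0$, vanishing of the lower-order and $f$-terms against $u_n-u$, then the $S_+$ property of $-\Delta_p$ and continuity of $\Phi_\lambda$, $D_u\Phi_\lambda$) coincides with the paper's argument. Where you genuinely diverge is the boundedness step. The paper extracts everything from the single identity \eqref{Pr745}: since $D_u\Phi_\lambda(u_n)(u_n)=H_\lambda(u_n)-F(u_n)$ is small, $\Phi_\lambda(u_n)=\frac{\gamma-p}{p\gamma}H_\lambda(u_n)+o(\cdot)\to c<0$, and from this it asserts in one line that $\|u_n\|$ is bounded, $\|u_n\|\ge\delta>0$ and $H_\lambda(u_n)<0$. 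You instead treat boundedness as the delicate point (correctly observing that the pairing with $u_n$ only gives $H_\lambda(u_n)=O(1)+o(\|u_n\|)$, $F(u_n)=O(1)+o(\|u_n\|)$, and that $H_\lambda$ is indefinite for $\lambda>\lambda_1$), and you close it with a normalization/blow-up argument: $w_n=u_n/\|u_n\|$, $H_\lambda(w_n)\to0$, $F(w_n)\to0$, a nonzero limit $w$ with $f|w|^{\gamma-2}w=0$, hence $w$ supported in $\Omega^0$, contradiction via ($f_1$) and the nonexistence threshold $\overline{\lambda}$, exactly in the spirit of the $\mu_0=0$ case in Lemma \ref{solextr}(b). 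The trade-off: the paper's route is shorter and is stated for arbitrary $\lambda$ with no structural hypotheses on $f$, but its deduction of boundedness from \eqref{Pr745} is very terse precisely because of the indefiniteness you point out; your route is more careful there, at the price of using $\Omega^+\neq\emptyset$, ($f_1$) and the restriction $\lambda<\lambda_1(\mathrm{int}(\Omega^0))$, so you really prove a version of Proposition \ref{la1c} restricted to the parameter range in which it is applied (Lemma \ref{mppp13}). Note that your caveat about replacing $\Lambda$ by $\min\{\Lambda,\overline{\lambda}\}$ is not even needed: since Lemma \ref{lemlu} produces positive solutions for every $\lambda\in(\lambda^*,\Lambda)$ and no positive solutions exist above $\overline{\lambda}$, one automatically has $\Lambda\le\overline{\lambda}$, so your contradiction chain $\lambda<\Lambda\le\overline{\lambda}\le\lambda_1(\mathrm{int}(\Omega^0\cup\Omega^+))<\lambda_1(\mathrm{int}(\Omega^0))$ is available as is. Two minor bookkeeping remarks: Proposition \ref{auxiliary} is stated for sequences in $\Theta^+_\lambda$, so for your normalized $w_n$ you should invoke boundedness plus the Sobolev embedding directly (as its proof does) and argue $w\neq0$ from $\lambda\int|w_n|^p=1-H_\lambda(w_n)\to1$, which is what you in fact do; and the passage from $f|w|^{\gamma-2}w=0$ a.e.\ to $w\in W_0^{1,p}(\mathrm{int}(\Omega^0))$ is at the same level of rigor as the corresponding step in Lemma \ref{solextr}, so it is acceptable here.
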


\begin{proof}
	The assumption $D_u\Phi_\lambda(u_n)\to 0$ entails  $H_\lambda(u_n)-F(u_n)=o(1)$ and 
	therefore
	\begin{equation}\label{Pr745}
	\frac{1}{p}H_\lambda(u_n)-\frac{1}{\gamma}F(u_n)=\frac{\gamma-p}{p\gamma}H_\lambda(u_n)+o(1)\to
	c<0~~\mbox{as}~~ n\to \infty.
	\end{equation}
	This  implies that $\|u_n\|$ is bounded, $||u_n||\geq \delta >0$ and 
	$H_\lambda(u_n)<0$ for sufficiently large $n$. 
	Thus we may assume $u_n\rightharpoonup u$ in $W$, $u_n \to u$ 
	in $L^p(\Omega)$ and $L^\gamma(\Omega)$ and $u\neq 0$.
	Hence and since $D_u\Phi(u_n)\to 0$ as $n\to \infty$,  we have
	$$
	\limsup_{n\to \infty} \langle -\Delta_p u_n, u_n-u\rangle =0.
	$$
	Thus by $S^+$ property of the $p$-Laplacian operator (see \cite{drabekMilota}) we derive that $u_n\to u$ 
	strongly in $W$.
	
\end{proof}


\begin{thebibliography}{99}

\bibitem{alam} 
{\sc S. Alama and G. Tarantello.} 
{\em On semilinear elliptic
	equations with indefinite nonlinearities}, Cal. Var. and
Part. Diff. Eq., {\bf 1} (4) (1993), 439--475. 

\bibitem{BCDN} 
{\sc H. Berestycki, I. Capuzzo--Dolcetta and L. Nirenberg.}  {\em Variational methods for indefinite superlinear homogeneous
	elliptic problems}, NoDEA., {\bf 2} (4) (1995), 553--572.

\bibitem{CrRab} 
{\sc M. G. Crandall and P. H. Rabinowitz.} {\em Bifurcation from simple eigenvalues}, J. Functional Analysis., {\bf 8} (1971), 321--340.

\bibitem{diaz} 
{\sc J. I. D\'iaz and J. E. Sa\'a.}  {\em Existence et unicit\'e de solutions positives pour certaines
	\'equations elliptiques quasilin\'eaires}, C. R. Acad. Sci. Paris S\'er. I Math., {\bf 305} (12) (1987), 521--524.

\bibitem{drabekMilota} {\sc P. Dr\'abek and  J. Milota.} {\em Methods of nonlinear analysis: applications to differential equations}, Springer Science \& Business Media, 2013.

\bibitem{drabekp} {\sc P. Dr\'abek and  S.I. Pohozaev.} {\em Positive solution for the p-Laplacian: application of the fibering method}. Proc. Roy. Soc. Edinb. Sect. A., {\bf 127} (4) (1997), 703--726.

\bibitem{ilyasCRAS} {\sc Y, Il'yasov.} {\em On positive solutions of indefinite elliptic equations}. C. R. Acad. Sci. Paris S\'er. I Math., {\bf 333} (6) (2001), 533--538.

\bibitem{ilyasENMM} {\sc Y, Il'yasov.} {\em On extreme values of Nehari manifold method via nonlinear Rayleigh's quotient}. Topological Methods Nonlinear Analysis., {\bf 6} (2017), 1--31.

\bibitem{ilyas} {\sc Y. Il'yasov.} {\em  Non-local investigation of bifurcations of solutions of non-linear elliptic equations}, Izv. RAN. Ser. Mat., {\bf 66} (6) (2002), 1103-1130.

\bibitem{kuzpoh} {\sc I. Kuzin and S. Pohozaev.} {\em  Entire solutions of semilinear elliptic equations}, Birkh\"auser Verlag, Basel, Progress in Nonlinear Differential Equations and their
Applications, {\bf 33} (1997).

\bibitem{lindqvist}   {\sc P. Lindqvist.} {\em On the equation $div(\vert \nabla u\vert \sp{p-2}\nabla u)+\lambda \vert u\vert \sp{p-2}u=0$}, Proc. Am. Math. Soc., {\bf 109} (1) (1990), 157--164.

\bibitem{Neh1} {\sc Z. Nehari},   {\em On a class of nonlinear second-order differential equations}, Trans. Amer. Math. Soc. (1960) 101--123.

\bibitem{ou2} {\sc T. Ouyang.} {\em On the positive solutions of semilinear
	equations $\Delta u+\lambda u+hu^p=0$ on the compact
	manifolds Part II}, Indiana Univ. Math. J., {\bf 40} (3)  (1991),
1083--1141.

\bibitem{poh} {\sc S. I. Pokhozhaev.} {\em The fibration method for solving nonlinear boundary value
	problems}, Trudy Mat. Inst. Steklov., {\bf 192}  (1990),
146--163. 

\bibitem{trud}   {\sc  N. S. Trudinger.} {\em On {H}arnack type inequalities and their application to
	quasilinear elliptic equations}, Comm. Pure Appl. Math., {\bf 20} (1997), 721--747.

\end{thebibliography}

\end{document}